\definecolor{britishracinggreen}{rgb}{0.0, 0.26, 0.15}
\definecolor{burgundy}{rgb}{0.5, 0.0, 0.13}
\definecolor{darkblue}{rgb}{0.0, 0.0, 0.55}
\patchcmd{\section}{\scshape}{\scshape\large}{}{}
\patchcmd{\subsection}{\bfseries}{\scshape\large}{}{}
\patchcmd{\subsection}{-.5em}{.5em}{}{}
\newtest{\IsThereSpaceOnPage}[1]{\lengthtest{\dimexpr\textheight-\pagetotal<#1}}
\let\stdsection\section
\renewcommand\section{%
\ifthenelse{\IsThereSpaceOnPage{.25\textheight}}{\clearpage}{}\stdsection}
\let\stdsubsection\subsection
\newcommand{\SubSecSkip}{\vspace{.5\baselineskip plus 8pt minus 3pt}}
\renewcommand\subsection{\ifthenelse{\equal{\arabic{subsection}}{0}}%
  {\vspace{0pt plus 10pt}}{\SubSecSkip}\stdsubsection}
\setlist{leftmargin=2\parindent,     
         itemsep=.6ex plus 2pt minus 2pt}
\setlist[description,1]{style=sameline}
\setlist[enumerate,1]{label=\slshape{(\roman*)}}
\newcommand*{\st}{ \;|\; }
\newcommand{\defn}[1]{{\everymath{\bm}\sffamily\bfseries #1}}
\newcommand{\from}{\colon}
\newcommand{\DS}{\displaystyle}
\newcommand{\C}{{\mathbb C}}
\newcommand{\D}{{\mathbb D}}
\newcommand{\N}{{\mathbb N}}
\newcommand{\R}{{\mathbb R}}
\newcommand{\Z}{{\mathbb Z}}
\newcommand{\DD}{{\mathcal D}}
\newcommand{\EE}{{\mathcal E}}
\newcommand{\OO}{{\mathcal O}}
\newcommand{\RR}{{\mathcal R}}
\renewcommand{\SS}{{\mathcal S}}
\newcommand{\SP}{\mathcal{S}{\mkern-.9mu}\mathcal{P}}
\newcommand*{\closure}[1]{%
  \mkern 1.5mu\overline{\mkern-1.5mu#1\mkern-1.5mu}\mkern 1.5mu}
\renewcommand{\Re}{\operatorname{Re}}
\renewcommand{\Im}{\operatorname{Im}}
\newcommand{\uflt}{\mathfrak{u}} 
\newcommand{\vrh}[1]{\vrule height #1 width 0pt depth 0pt}
\newcommand{\shortP}{\smash{P}\vrh{1.3ex}}
\newcommand{\Gen}[2][]{#2^{(#1)}} 
\newcommand{\Puz}[1][]{  
  \ifthenelse{\isempty{#1}}{{\mathcal P}}{\Gen[#1]{\mathcal P}}}
\newcommand{\W}[1][]{  
  \ifthenelse{\isempty{#1}}{{W}}{\Gen[#1]{W}}}
\newcommand{\Wtilde}[1][]{  
  \ifthenelse{\isempty{#1}}{\widetilde{W}}{{\widetilde{W}_{#1}}}}
\newcommand{\Ptilde}[1][]{  
  \ifthenelse{\isempty{#1}}{\widetilde{\shortP}}{{\widetilde{\shortP}_{#1}}}}
\newcommand{\Piece}[2][]{  
  \ifthenelse{\isempty{#1}}%
    {P_{#2}}
    {\Gen[#1]{P_{#2}}}
}
\renewcommand{\H}[1][]{  
  \ifthenelse{\isempty{#1}}{{H}}{\Gen[#1]{H}}}
\newcommand{\FLT}[2][]{ 
  \ifthenelse{\isempty{#1}}{\check{#2}}{\Gen[#1]{\check{#2}}}}
\newcommand{\Vslit}{V_2^*}
\newcommand{\Disk}[2][]{  
  \ifthenelse{\isempty{#1}}{\D(#2)}{\D_{#1}(#2)}}
\newcommand{\CBOX}[2]{\big\lfloor{#1},\,{#2}\,\big\rceil}
\newcommand{\out}{\mathrm{out}}
\newcommand{\tin}{\mathrm{in}}
\newcommand{\Pin}[1][]{\ifthenelse{\isempty{#1}}{\FLT{P}_\tin}{\FLT[#1]{P}_\tin}}
\newcommand{\Qin}[1][]{\ifthenelse{\isempty{#1}}{\FLT{Q}_\tin}{\FLT[#1]{Q}_\tin}}
\newcommand{\Win}[1][]{\ifthenelse{\isempty{#1}}{\FLT{W}_\tin}{\FLT[#1]{W}_\tin}}
\newcommand{\Hin}[1][]{\ifthenelse{\isempty{#1}}{\FLT{H}_\tin}{\FLT[#1]{H}_\tin}}
\newcommand{\Pout}[1][]{\ifthenelse{\isempty{#1}}{\FLT{P}_\out}{\FLT[#1]{P}_\out}}
\newcommand{\Qout}[1][]{\ifthenelse{\isempty{#1}}{\FLT{Q}_\out}{\FLT[#1]{Q}_\out}}
\newcommand{\Wout}[1][]{\ifthenelse{\isempty{#1}}{\FLT{W}_\out}{\FLT[#1]{W}_\out}}
\newcommand{\Hout}[1][]{\ifthenelse{\isempty{#1}}{\FLT{H}_\out}{\FLT[#1]{H}_\out}}
\newcommand{\Vout}[1][]{\ifthenelse{\isempty{#1}}{\FLT{V}_\out}{\FLT[#1]{V}_\out}}
\newcommand{\apPtilde}{\FLT{\smash{\widetilde{\smash{P}\vrh{1.2ex}}}\vrh{1.8ex}}}
\newcommand{\apWtilde}{\FLT{\smash{\widetilde{\smash{W}\vrh{1.3ex}}}\vrh{1.9ex}}}
\newcommand{\apF}{\FLT{F}}
\newcommand{\apz}{\FLT{z}}
\newcommand{\apw}{\FLT{w}}
\newcommand{\Vtwo}{\FLT{V}_2}
\newcommand\I{\mathrm{I}}
\newcommand\II{\mathrm{II}}
\newcommand\III{\mathrm{III}}
\newcommand\IV{\mathrm{IV}}
\renewcommand\emptyset{\varnothing}  
\newcommand\mc{\mathbb{C}}
\newcommand\mr{\mathbb{R}}
\newcommand\mh{\mathbb{H}}
\renewcommand{\setminus}{\smallsetminus}  
\renewcommand\le{\leqslant}  
\renewcommand\leq{\leqslant}
\renewcommand\ge{\geqslant}
\renewcommand\geq{\geqslant}
\DeclareMathOperator\sign{sign}
\DeclareMathOperator\dist{dist}
\DeclareMathOperator\diam{diam}
\DeclareMathOperator{\ICL}{ICU}
\newcommand{\re}{\Re}
\newcommand{\im}{\Im}
\renewcommand\i{\mathrm{i}}
\DeclareMathOperator\inter{Int\,}
\newcommand{\ie}{\text{i.e.\;\,}}
\newcommand{\eg}{\emph{e.g.}\ }
\newcommand{\dd}{\operatorname{d}}
\newcommand{\hd}{\mathrm{dim_H}}
\DeclareMathOperator{\ar}{area}
\numberwithin{equation}{section}
\numberwithin{figure}{section}
\def\newaliasedtheorem#1[#2]#3{%
  \newaliascnt{#1@alt}{#2}
  \newtheorem{#1}[#1@alt]{#3}
  \expandafter\newcommand\csname #1@altname\endcsname{#3}
}
\newtheorem*{mainthm}{Main Theorem}
\newtheoremstyle{AlgorithmStyle}
    {}{}
    {\normalfont}                
    {}                           
    {\bfseries}               
    {.}                          
    {.5em}                       
    {}  
\theoremstyle{AlgorithmStyle}
\newtheoremstyle{RemarkStyle}
    {1.5ex plus 4pt minus 2pt}   
    {1.5ex plus 4pt minus 2pt}   
    {\normalfont}                
    {}                           
    {\itshape}                  
    {.}                          
    {.5em}                       
    {}  
\theoremstyle{RemarkStyle}
\let\oldfigure\figure
\def\figure{\@ifnextchar[\figure@i \figure@ii}
\def\figure@i[#1]{\addtocounter{equation}{1}\oldfigure[#1]}  
\def\figure@ii{\addtocounter{equation}{1}\oldfigure} 
\let\oldtable\table
\def\table{\@ifnextchar[\table@i \table@ii}   
\def\table@i[#1]{\addtocounter{equation}{1}\oldtable[#1]} 
\def\table@ii{\addtocounter{equation}{1}\oldtable} 
\def\IMSmarkvadjust{0 pt}
\def\IMSmarkhadjust{0 pt}
\def\IMSmarkhpadding{0 pt}
\def\IMSpubltext{Published in modified form:}
\def\SBIMSMark#1#2#3{
 \font\SBF=cmss10 at 10 true pt
 \font\SBI=cmssi10 at 10 true pt
 \setbox0=\hbox{\SBF \hbox to \IMSmarkhpadding{\relax}
                Stony Brook IMS Preprint \##1}
 \setbox2=\hbox to \wd0{\hfil \SBI #2}
 \setbox4=\hbox to \wd0{\hfil \SBI #3}
 \setbox6=\hbox to \wd0{\hss
             \vbox{\hsize=\wd0 \parskip=0pt \baselineskip=10 true pt
                   \copy0 \break%
                   \copy2 \break%
                   \copy4 \break}}
 \dimen0=\ht6   \advance\dimen0 by \vsize \advance\dimen0 by 8 true pt
                \advance\dimen0 by -\pagetotal
	        \advance\dimen0 by \IMSmarkvadjust
 \dimen2=\hsize \advance\dimen2 by .25 true in
	        \advance\dimen2 by \IMSmarkhadjust

%
%
  \openin2=publishd.tex
  \ifeof2\setbox0=\hbox to 0pt{}
  \else 
     \setbox0=\hbox to 3.1 true in{
                \vbox to \ht6{\hsize=3 true in \parskip=0pt  \noindent  
                {\SBI \IMSpubltext}\hfil\break
                \textsl{Inventiones Mathematicae} (to appear 2020) 
                \vfill}}
  \fi
  \closein2
  \ht0=0pt \dp0=0pt
 \ht6=0pt \dp6=0pt
 \setbox8=\vbox to \dimen0{\vfill \hbox to \dimen2{\copy0 \hss \copy6}}
 \ht8=0pt \dp8=0pt \wd8=0pt
 \copy8
 \message{*** Stony Brook IMS Preprint #1, #2. #3 ***}
}
\begin{document}
\SBIMSMark{2017/03}{December 2017}{}

\title{On the Lebesgue measure of the Feigenbaum Julia set}
\thanks{Artem Dudko acknowledges the support by the National Science Centre,
  Poland, grant 2016/23/P/ST1/04088 under POLONEZ programme which has
  received funding from the
  EU\;\protect\includegraphics[height=.7\baselineskip]{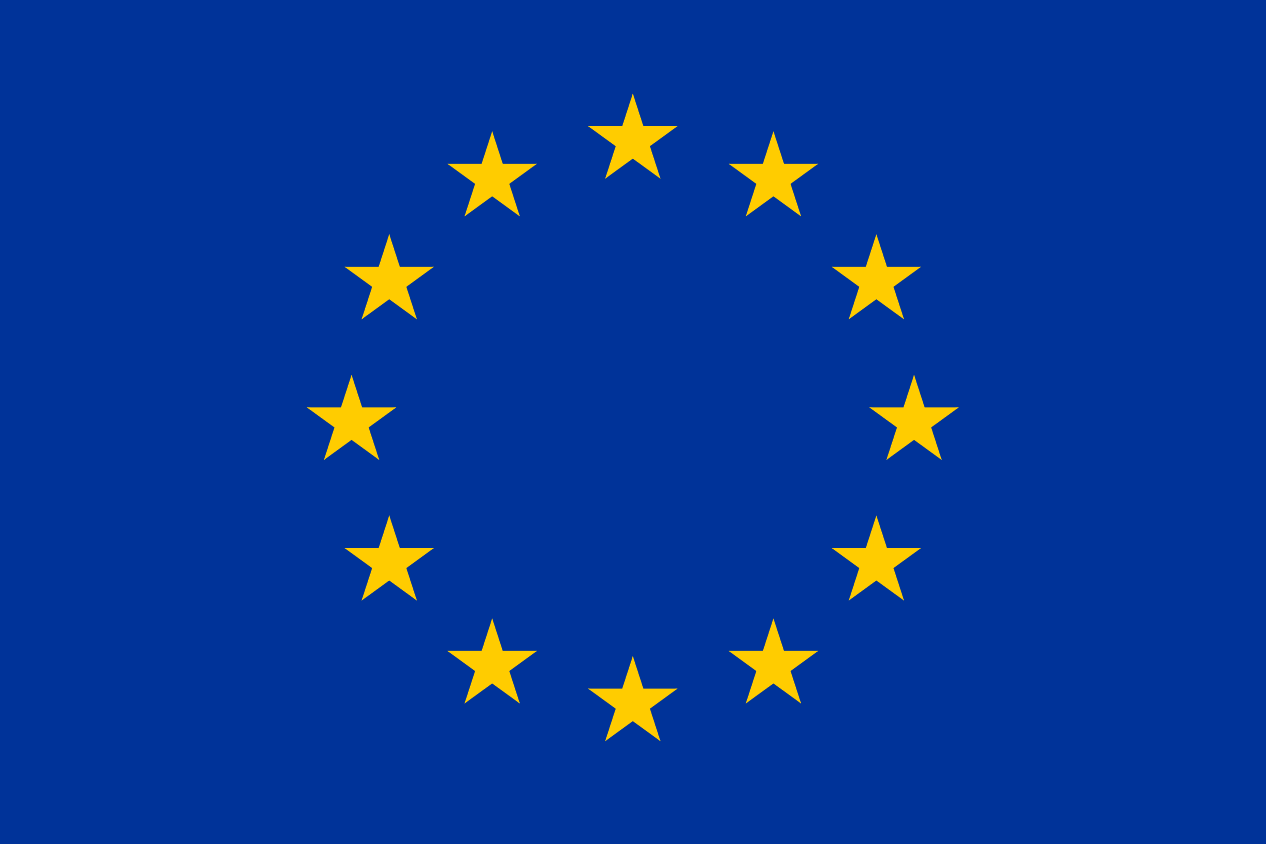}
  Horizon 2020 research and innovation programme under the MSCA grant
  agreement No. 665778}  
\author{Artem Dudko}
\address{Artem Dudko\\
Institute of Mathematics\\Polish Academy of Sciences (IMPAN)\\Warsaw, Poland}
\email{adudko@impan.pl}
\author{Scott Sutherland}
\address{Scott Sutherland\\
Institute for Mathematical Sciences\\
Stony Brook University\\
Stony Brook, New York 11794\\USA}
\email{scott@math.stonybrook.edu}

\date{}

\begin{abstract} We show that the Julia set of the Feigenbaum polynomial 
  has Hausdorff dimension less than~2 (and consequently it has zero Lebesgue
  measure).
  This solves a long-standing open question.
\end{abstract}

\maketitle
\thispagestyle{empty}

\setcounter{tocdepth}{1} 
\tableofcontents

\section{Introduction}

In \cite{AvilaLyubich-08} and \cite{AvilaLyubich-15}, Avila and Lyubich
developed a new method for studying Lebesgue area and Hausdorff dimension of
Julia sets of Feigenbaum maps. They constructed examples of Feigenbaum maps
with Julia sets of Hausdorff dimension less than two and examples of
Feigenbaum maps with Julia sets of positive area. Their approach can be used
to determine whether for a given periodic point of renormalization the Julia
set has positive area, zero area and Hausdorff dimension two, or Hausdorff
dimension less than two.
However, even for the most studied examples of
Feigenbaum maps (the Feigenbaum polynomial $f_{\mathrm{Feig}}$ and the period
doubling renormalization fixed point $F$), the calculations involved in
verification are extremely computationally complex.

In this paper we present
a new sufficient condition for the Julia set $J_F$ of $F$ to have Hausdorff
dimension $\hd(J_F)$ less than two.
Using computer-assisted means with explicit bounds on errors, we show that this
condition is satisfied. Thus, we solve a long-standing open question.

\newcommand{\refMainThm}{\hyperref[ThmMain]{main theorem}}
\begin{mainthm}\label{ThmMain}
The Hausdorff dimension of the Julia set $J_F$ of the Feigenbaum map $F$ is
less than two. In particular, the Lebesgue measure of $J_F$ is equal to
zero.
\end{mainthm}

The structure of the paper is as follows. In \autoref{SecPrelim}, we
briefly recall the definition of quadratic-like renormalization, describe the
relevant results of Avila and Lyubich, and introduce the Feigenbaum
map~$F$. We also define the sets $\tilde{X}_n$ of points whose orbits
intersect certain small neighborhoods of the origin, and denote by
$\tilde\eta_n$ the relative measures of $\tilde{X}_n$.
Using the Avila-Lyubich results,  we conclude that in order to prove
$\hd(J_F)<2$ it is sufficient to show that $\tilde\eta_n$ converges to
$0$ exponentially fast in $n$.

In \autoref{SecFeigStruct}, we describe the structure of the map
$F$.
\autoref{SecBounds} gives us distortion bounds for
certain branches of inverse iterates of $F$.
In \autoref{SecRecEst}, we state and prove the main result of the
present paper (\autoref{ThRecEst}), which gives recursive
inequalities for $\tilde{\eta}_n$.
As a result of \autoref{ThRecEst}, we obtain a sufficient condition to
show $\hd(J_F)<2$ (\autoref{PropRecMeas}).
This condition is one that can be checked by rigorous computer
estimates, which we discuss in \autoref{SecComputation}.

\medskip
The authors would like to acknowledge the invaluable assistance of Misha
Lyubich, who suggested the problem (as well as the collaboration) and
participated in many fruitful discussions,
providing constant encouragement and attention.
This paper was also significantly improved by discussions with Michael
Yampolsky, Sebastian van~Strien and Davoud Cheraghi, to whom we are quite
appreciative.

\section{Preliminaries}\label{SecPrelim}
Recall that a \defn{quadratic-like map} is a ramified covering $f\from U\to V$ of
degree~$2$, where $U\Subset V$ are topological disks in $\C$. We refer the
reader to \cite{DH}, \cite{L}, or \cite{McM} for a more detailed treatment.
For a quadratic-like  map $f$, its filled Julia set $K_f$ and Julia set
$J_f$ are given by
\begin{equation*}\label{EqJuliaDef}
  K_f=\Set{z\in U \st f^n(z)\in U \text{ for all } n\in\N}, \quad
  J_f=\partial K_f.
\end{equation*}

Let $f\from U\to V$ be quadratic-like.
The map $f$ is \defn{renormalizable of period~$n$}
if there there is an $n>1$ and $U'\subset U$ for which
$f^n\from U'\to V'=f^n(U')$ is a quadratic-like map with connected Julia set~$J'$,
and such that the sets $f^i(J')$ are either disjoint from $J'$ or
intersect it only at the $\beta$-fixed point.
In this case, $f^n|_{U'}$ is called a \defn{pre-renormalization of $f$};
the map  $\RR_n f:=\Lambda\circ f^n|_{U'}\circ\Lambda^{-1}$,
where $\Lambda$ is an appropriate rescaling of $U'$, is the
\defn{renormalization of $f$}.

An infinitely renormalizable quadratic-like map $f$ is called a
\defn{Feigenbaum map} if it has bounded combinatorics (that is, there is a
uniform bound on the periods $n$ of renormalization) and of bounded type
(the moduli of $V\setminus U$ are uniformly bounded).

\smallskip
The present paper is concerned with the Hausdorff dimension of the Julia set
of the quadratic Feigenbaum polynomial
$f_{\mathrm{Feig}}(z)=z^2+c_{\mathrm{Feig}}$, where
$c_{\mathrm{Feig}} \approx -1.4011551890$ is the limit of the sequence of real
period doubling parameters.  Discovery of
universality properties in the period doubling case during the 1970s by
Coulet \& Tresser and Feigenbaum (\cite{CT}, \cite{Feigenbaum1,
Feigenbaum2}) gave rise to the development of renormalization theory in
dynamics.  This development is well documented; for a brief overview, see
\cite[\S1.5]{Lyubich-Hairiness} and the references therein, for example.

The Julia set of $f_{\mathrm{Feig}}$ has been shown to be locally
connected (see \cite{HuJiang, Jiang}, also \cite{Buff}), although the Julia
set is ``hairy'' in the sense that it converges to the entire plane when
magnified about the critical point (see \cite[Thm.~8.7]{McM}).
In contrast to the quadratic case, the Hausdorff dimension of the Julia set of a
period-doubling Feigenbaum map tends to~2 as the order of the critical point
tends to infinity\cite{LevinSwiatek1}, while the corresponding Lebesgue measure tends to
zero \cite{LevinSwiatek2}.

\SubSecSkip
\subsection{The Avila-Lyubich trichotomy}
Let $f$ be a Feigenbaum map. Let $f_n$ denote the $n$-th pre-renormalization
of $f$, let $J_n$ be its Julia set, and let $\OO(f)$ be the
critical orbit.

\goodbreak
\setlength{\columnsep}{-7em}

Avila and Lyubich showed the existence of domains $U^n\subset V^n$ (called
``nice domains'') for which

\begin{itemize}[itemsep=.2ex plus 2pt minus 2pt]
\item $f_n(U^n)=V^n$;
\item $U^n \;\supset\; J_n\cap \OO(f)$;
\item $V^{n+1}\subset U^n$;
\item $f^k(\partial V^n)\cap V^n=\emptyset$ for all $n,k$;
\item $A^n=V^n\setminus U^n$ is ``far'' from $\OO(f)$;
\item $\ar(A^n)\asymp \ar(U^n)\asymp         \diam(U^n)^2\asymp\diam(V^n)^2$.
\end{itemize}
\noindent The construction of $U^n$ and $V^n$ involves cutting neighborhoods
of zero by equipotentials and external rays of pre-renormalizations $f_n$ of
$f$ and taking preimages under long iterates of $f_n$.

\smallskip
For each $n\in\N$, let $X_n$ be the set of points in $U^0$ that land
in $V^n$ under some iterate of $f$, and
let~$Y_{n}$ be the set of points in $A^n$ that never return to $V^n$
under iterates of $f$.
Introduce the quantities
$$\eta_{n}=\frac{\ar(X_{n})}{\ar(U^0)},
  \quad \xi_{n}=\frac{\ar(Y_{n})}{\ar(A^n)}.$$

\begin{thm}[Avila-Lyubich \cite{AvilaLyubich-08}]\label{ThAL}
 Let $f$ be a periodic point of renormalization, \ie there is a $p$ so that
 $\RR^p f=f$. Then exactly one of the following is true:
\begin{description}[leftmargin=8.5em,labelindent=\parindent,
                    itemsep=.2ex plus 1pt minus 2pt]
\item[Lean case]
   $\eta_n$ converges to $0$ exponentially fast, $\inf\xi_n>0$, and
  $\hd(J_f)<2$;
\item[Balanced case]
   $\eta_n\asymp \xi_n\asymp \frac{1}{n}$ and $\hd(J_f)=2$ with $\ar(J_f)=0$;
\item[Black Hole case]
   $\inf\eta_n>0$, $\xi_n$ converges to $0$ exponentially fast, and
  $\ar(J_f)>0$.
\end{description}
Specific bounds determining the behavior of  $\eta_n$ and $\xi_n$ depend on
the geometry of $A^n$ and $\OO(f)$.
\end{thm}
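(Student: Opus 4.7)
The plan is to exploit the strict self-similarity afforded by $\mathcal{R}^p f = f$, which makes the level-$(n+p)$ configuration $(U^{n+p}, V^{n+p}, A^{n+p})$ a rescaled copy of $(U^n, V^n, A^n)$ under the renormalization dilation. First I would introduce the first-landing maps $L_n \colon X_n \cap V^0 \to V^n$, sending each point to its first iterate landing in $V^n$. The niceness condition $f^k(\partial V^n)\cap V^n=\emptyset$ guarantees that the inverse branches of $L_n$ are univalent; the uniformly bounded moduli of annuli surrounding the $V^n$ then furnish Koebe distortion bounds on each branch, letting me replace area ratios by essentially combinatorial sums up to a universal multiplicative error.

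The heart of the argument would be a pair of coupled recursions linking $(\eta_n, \xi_n)$ to $(\eta_{n+p}, \xi_{n+p})$. By partitioning orbits in $X_n$ according to whether they return to $V^n$ at all and, if so, whether the next return lies in $V^{n+p}$, and by rescaling the level-$(n+p)$ data to level-$n$ data via the periodic renormalization, I would aim for bounds of the schematic shape
\[
\eta_{n+p} \asymp \eta_n \bigl(1 - c_1\, \xi_n\bigr), \qquad \xi_{n+p} \asymp \xi_n \bigl(1 - c_2\,\eta_n\bigr),
\]
where $c_1, c_2 > 0$ are geometric constants depending only on the fixed point; the loss between scales is precisely the mass that escapes into $Y_n$ or fails to refocus from $A^n$ back into $V^n$, and the bounded-distortion framework is what makes the ratios universal.

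The trichotomy then emerges from the classical analysis of such recursions $u_{n+1} = u_n(1-v_n)$: if $\liminf \xi_n > 0$ then $\eta_n$ decays geometrically (\emph{Lean} case); if $\inf \eta_n > 0$ then $\xi_n$ decays geometrically (\emph{Black Hole} case); and if both sequences tend to zero, the coupling between the two equations forces $\eta_n \asymp \xi_n \asymp 1/n$ (\emph{Balanced} case). The dimension and area consequences are then pulled back through the univalent branches of $L_n$: exponential decay of $\eta_n$ together with bounded distortion covers $J_f$ by topological disks of finite $s$-content for some $s<2$, giving $\hd(J_f)<2$; in the Black Hole case a positive fraction of $U^0$ is trapped in the nested $V^n$'s, producing $\ar(J_f)>0$; and in the Balanced case the polynomial rate of $\eta_n$ is exactly insufficient to beat dimension $2$ yet still fast enough to force $\ar(J_f)=0$.

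The main obstacle, I expect, lies in setting up the recursions precisely enough to see the sharp dichotomy between linear and quadratic decay on the right-hand side. Extracting the positive sign of $c_1$ and $c_2$ and controlling the error terms requires genuine use of the geometric hypotheses — the separation $A^n \cap \OO(f) = \emptyset$, the uniform comparability $\ar(A^n) \asymp \ar(U^n)$, and the Koebe constants — and it is here that the periodic self-similarity does essential work rather than merely simplifying bookkeeping. Once the recursion is in hand, the remaining analysis is a (delicate but standard) exercise in discrete dynamics.
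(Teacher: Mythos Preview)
The paper does not prove this theorem; it is quoted from \cite{AvilaLyubich-08} and used as input. The only remark the paper makes about the proof is the sentence immediately following the statement: the argument ``relies on recursive estimates involving $\eta_n$, $\xi_n$, and the Poincar\'e series for $f_n$.'' So there is no detailed proof here to compare your proposal against.

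That said, your outline is broadly aligned with the actual Avila--Lyubich strategy: self-similarity under $\mathcal{R}^p$, first-landing maps with Koebe control, and a coupled recursion in $(\eta_n,\xi_n)$ whose asymptotics drive the trichotomy. The one ingredient the paper explicitly names and you omit is the Poincar\'e series, and this is not cosmetic. Your passage from ``$\eta_n\to 0$ exponentially'' to ``$\hd(J_f)<2$'' by a covering argument is the weak link: exponential decay of an \emph{area} proportion does not by itself yield a cover of $J_f$ with finite $s$-dimensional content for some $s<2$, because that requires control on the \emph{diameters} of the pullback pieces, not merely their total area, and the number of branches of $L_n$ grows without an a~priori bound. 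In the original argument this gap is bridged by relating $\eta_n$ and $\xi_n$ to the Poincar\'e series $\sum_{f^k(z)=w}|Df^k(z)|^{-2}$ and identifying $\hd(J_f)$ with its critical exponent via hyperbolic dimension. Your schematic recursion is morally correct, but the conversion between area ratios and Hausdorff dimension is precisely where the Poincar\'e-series machinery does the real work.
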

The proof of \autoref{ThAL} relies on recursive estimates involving $\eta_n$,
$\xi_n$, and the Poincar\'e series for $f_n$. In the \emph{Lean case}
(relevant for this paper), Avila and Lyubich showed existence of a constant
$C>0$ which only depends on geometric bounds for $U^n\subset V^n$ and
$\OO(f)$, such that if there exists $m$ divisible by $p$ with
$\eta_m<\xi_m/C$, then $\eta_n\to 0$ exponentially fast.

Thus, to show that for the period doubling renormalization fixed point $F$ one
has $\hd(J_F)<2$, it would be sufficient to compute this constant $C$ and find
large enough $m$ so that $\eta_m<\xi_m/C$. However, this task turns out
to be extremely computationally complex for several reasons, including:
\begin{itemize}
\item constructing the sets $U^n$ and $V^n$ is very technical, and it is
  difficult to obtain rigorous approximations of these sets computationally;
\item the geometry of $U^n$ and $V^n$ is complicated and $U^n$ is not
  compactly contained in $V^n$, making the corresponding geometric bounds
  very rough;
\item the constant $C$ is given implicitly; estimates show it can be very
  large (on the order of $10^{10}$).
\end{itemize}

In our new sufficient condition for showing $\hd(J_F)<2$, we overcome these
difficulties by using the tiling of the plane by preimages of the upper
and the lower half-planes as introduced in \cite{Buff}. In particular, we
replace the nice domains of Avila/Lyubich by the Buff tiles containing zero on
the boundary.  These tiles can be approximated quite efficiently and have good
geometric bounds. Moreover, the scale-invariant structure of the
tiling allows us to construct explicit recursive estimates for quantities
which are an analogue to $\eta_n$ directly, without using the Poincar\'e series.
While our approach allows showing $\ar(J_F)=0$ without appealing to
\autoref{ThAL},  the results of \cite{AvilaLyubich-08} give the stronger
result that $\hd(J_F)<2$. 

\subsection{The fixed point of period-doubling renormalization}\label{SubsecFixP}
Recall (see \cite{Epstein-Notes}) that the fixed point $F$ of period-doubling
renormalization is a solution of  Cvitanovi\'c-Feigenbaum equation:
\begin{equation}\label{EqCvitFeig}
\left\{
\begin{array}{lll}
F(z) & = & -\tfrac{1}{\lambda}F^2(\lambda z),\\
F(0) & = & 1,\\
F(z) & = & H(z^2),\;\text{with}\;H^{-1}(z)\;\text{univalent in}
\;\C\setminus((-\infty,-\tfrac{1}{\lambda}]\cup[\tfrac{1}{\lambda^2},\infty)),
\end{array}\right.
\end{equation}
where  $\tfrac{1}{\lambda}=2.5029\ldots$ is one of the Feigenbaum constants.

From (\ref{EqCvitFeig}) we immediately obtain
 \begin{equation}\label{EqF2m}
 F^{2^m}(z)=(-\lambda)^mF(\tfrac{z}{\lambda^m})
 \end{equation}
whenever both sides of the equation are defined.

Results of H.~Epstein \cite{Epstein-89,Epstein-Notes} imply that there
exists a domain~$\W$ containing $0$ such that $F|_{\W}$ is a
quadratic-like map
\[ F\from \W \to
  \C\setminus((-\infty,-\tfrac{1}{\lambda}]\cup[\tfrac{1}{\lambda^2},\infty)).
\]
The  \defn{$n$-th pre-renormalization $F_n$} of $F$ is the restriction
of $F^{2^n}$ onto $\W[n]=\lambda^n \W$.

\bigskip
For each $n\in\N$, let $\tilde{X}_n$ denote the set of points $z\in \W[1]$
such that $F^k(z)\in \W[n]$ for some $k\ge 0$.
Set $$\tilde\eta_n=\frac{\ar(\tilde{X}_n)}{\ar(\W[1])}.$$
Thus, $\tilde\eta_n$ is the probability that the orbit of a point randomly
chosen from $\W[1]$ with respect to Lebesgue measure will intersect~$\W[n]$.
By construction, $\tilde X_{n+1}\subset \tilde X_n$ for any $n$. Therefore, $\tilde\eta_n$ is non-increasing in $n$.

\begin{lem}\label{LmHatEtaN} If $\tilde\eta_n$ converges to $0$ exponentially
  fast then $\eta_n$ also does.
\end{lem}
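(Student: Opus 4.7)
The strategy is to reduce $\eta_n$ to $\tilde\eta_{n-k_0}$ for a bounded shift $k_0$, via two ingredients: a geometric containment of the Avila--Lyubich nice domain $V^n$ inside some $\W[n-k_0]$ at the same dynamical scale, and a comparison of the ambient sets $U^0$ and $\W[1]$ through the dynamics of $F$.

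First I would establish that there exists $k_0 \ge 0$, independent of $n$, such that $V^n \subset \W[n-k_0]$ for all sufficiently large $n$. Both families have diameter comparable to $\lambda^n$, and both are ``nice'' with respect to $F_n$. Using the functional equation $F^{2^n}(z) = (-\lambda)^n F(z/\lambda^n)$ from~\eqref{EqF2m}, the full containment rescales to level $0$, so it suffices to check it once at that level, using the bounded-geometry bullet points recalled after \autoref{ThAL} together with the explicit description of $\W$ recalled in \autoref{SubsecFixP}.

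Having fixed such a $k_0$, the inclusion $V^n \subset \W[n-k_0]$ yields $X_n \subset \tilde X_{n-k_0}^{U^0}$, where $\tilde X_m^{U^0}$ denotes the set of $z \in U^0$ whose forward orbit hits $\W[m]$. To transport the area estimate from $\W[1]$ to $U^0$, I would partition $U^0$ (up to a set of measure zero) into the first-entry sets
\[
 T_k = \Set{z \in U^0 \st F^k(z) \in \W[1],\ F^j(z) \notin \W[1] \text{ for } 0 \le j < k}.
\]
On each $T_k$, away from a negligible set of preimages of the critical orbit, $F^k$ restricts to a univalent branch onto $\W[1]$, and the distortion bounds of \autoref{SecBounds} give $\ar(T_k \cap \tilde X_m^{U^0}) \le C\,\tilde\eta_m\,\ar(T_k)$ with $C$ independent of $k$ and $m$. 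Summing over $k$ yields $\ar(\tilde X_m^{U^0}) \le C\,\tilde\eta_m\,\ar(U^0)$; combined with the preceding inclusion this gives $\eta_n \le C\,\tilde\eta_{n-k_0}$.

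Exponential decay is preserved under a bounded index shift, which closes the argument. The main obstacle is the first step: precisely relating the AL construction of $V^n$ (built from equipotentials and external rays of $f_n$) to the Buff-style tiling defining $\W[n]$, in order to obtain the explicit containment $V^n \subset \W[n-k_0]$. The first-entry distortion step, by contrast, is routine once \autoref{SecBounds} is available.
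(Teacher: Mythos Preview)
Your first step—the containment $V^n\subset\W[n-k_0]$ for a fixed shift $k_0$—is precisely the ingredient the paper uses. The divergence, and the problem, is in your step~3.

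The first-entry decomposition has two gaps. First, the assertion that on each $T_k$ the map $F^k$ is a univalent branch \emph{onto} $\W[1]$ is not correct: connected components of $T_k$ map \emph{into} $\W[1]$, and there is no reason their images fill $\W[1]$. Second, and more seriously, the distortion bounds of \autoref{SecBounds} do not apply to these sets. \autoref{PropC2} and \autoref{PropKoebeBA} are proved only for \emph{primitive or separated copies of} $\Piece[m]{0}$, with the specific Koebe space $\C_\lambda$; your $T_k$ are arbitrary first-entry domains into $\W[1]$ and nothing in \autoref{SecBounds} furnishes a uniform Koebe collar for them. So the inequality $\ar(T_k\cap\tilde X_m^{U^0})\le C\,\tilde\eta_m\,\ar(T_k)$ with $C$ independent of $k$ is unsupported. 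Your assessment is thus inverted: step~1 is the routine part, and step~3 is where a new idea would be needed.

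The paper avoids all of this by using the \emph{exact} self-similarity of $F$ in place of any Koebe-type estimate. From the functional equation~\eqref{EqCvitFeig} one has $F^{2k}(\lambda z)=-\lambda F^k(z)$ whenever both sides are defined; hence if $z\in\W$ and $F^k(z)\in\W[m]$ for some $k$, then $F^{2k}(\lambda z)\in\W[m+1]$, i.e.\ $z\in\lambda^{-1}\tilde X_{m+1}$. Combining this with $U^0\subset\W$ and with the containment from step~1 gives a direct set-theoretic inclusion of $X_{n+n_0}$ into $\lambda^{-1}\tilde X_{n+1}$, and therefore $\eta_{n+n_0}\le(\ar\W/\ar U^0)\,\tilde\eta_{n+1}$. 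No decomposition, no distortion. Replace your step~3 by this scaling observation.
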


\begin{proof} The properties of nice domains imply that there exists $n_0$
such that $V^{n+n_0}\subset \W[n]$ for every~$n$.
Then $X_{n+n_0}\subset \lambda^{-1}\tilde{X}_{n+n_0+1}$ for every $n$, from which
the lemma follows.
\end{proof}

\section{Structure of the Feigenbaum map \texorpdfstring{$F$}{F}}
\label{SecFeigStruct}
  For the proof of the following we refer the reader to \cite{Epstein-Notes}:

\begin{prop}\label{PropX0} Let $x_0$ be the first positive preimage of~0
  under~$F$.
Then
\[
 F(\lambda x_0)=x_0, \quad
 F(1)=-\lambda, \quad
 F(\tfrac{x_0}{\lambda})=-\tfrac{1}{\lambda}
\]
and $\tfrac{x_0}{\lambda}$ is the first positive critical point of $F$.
\end{prop}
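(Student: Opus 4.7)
The plan is to read off the three identities by targeted substitutions into the functional equation
$$F(F(\lambda z)) = -\lambda\, F(z),$$
equivalent to the first line of \eqref{EqCvitFeig}, and to derive the critical point claim by differentiating the same equation and combining the result with the univalence structure of $H$ recorded in the third line of \eqref{EqCvitFeig}.

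First I would compute the three values. Setting $z = 0$ and using $F(0) = 1$ gives $F(1) = -\lambda$ immediately. Setting $z = x_0/\lambda$ yields $F(F(x_0)) = -\lambda\, F(x_0/\lambda)$; since $F(x_0) = 0$ by the definition of $x_0$, the left side is $F(0) = 1$, so $F(x_0/\lambda) = -1/\lambda$. Setting $z = x_0$ gives $F(F(\lambda x_0)) = 0$, so $F(\lambda x_0)$ is \emph{some} preimage of $0$. To single out $x_0$, I would invoke the real-analytic description of $F$ from \cite{Epstein-Notes}: $F$ is even and real-analytic on $\R$, strictly decreasing on $[0,\,x_0/\lambda]$ from $F(0) = 1$ down to $F(x_0/\lambda) = -1/\lambda$. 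Since $0 < \lambda x_0 < x_0$, we have $F(\lambda x_0) \in (0,1)$; and since monotonicity together with $F(1) = -\lambda < 0 = F(x_0)$ forces $x_0 < 1 < x_0/\lambda$, the value $F(\lambda x_0)$ lies in $(0,\,x_0/\lambda)$. On this interval the unique zero of $F$ is $x_0$, so $F(\lambda x_0) = x_0$.

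For the critical-point assertion, I would differentiate $F(F(\lambda z)) = -\lambda F(z)$ and evaluate at $z = x_0/\lambda$, obtaining
$$F'(F(x_0))\cdot F'(x_0) \;=\; -F'(x_0/\lambda).$$
Because $F(x_0) = 0$ and $F$ is even, $F'(0) = 0$, so the left side vanishes and $F'(x_0/\lambda) = 0$. To confirm that no smaller positive critical point exists, differentiate $F(z) = H(z^2)$ to get $F'(z) = 2z\, H'(z^2)$. For $z \in (0,\,x_0/\lambda)$, the image $F(z) \in (-1/\lambda, 1)$ lies inside the slit plane $\C \setminus ((-\infty,-1/\lambda] \cup [1/\lambda^2,\infty))$ on which $H^{-1}$ is univalent by \eqref{EqCvitFeig}; hence $z^2$ lies in the univalence domain of $H$, where $H'(z^2) \ne 0$. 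Thus $F'(z) \ne 0$ on $(0,\,x_0/\lambda)$, and $x_0/\lambda$ is indeed the first positive critical point.

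The main obstacle is really the input taken from \cite{Epstein-Notes}: the identification $F(\lambda x_0) = x_0$ and the location of the first positive critical point both rest on the real-analytic structure of $F$ on $\R$ and the strict monotonicity on $[0,\,x_0/\lambda]$, neither of which is accessible from the algebraic substitutions alone. Given those inputs, the proof collapses into the short computations above.
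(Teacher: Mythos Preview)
Your proposal is correct. The paper does not actually supply a proof of this proposition: it simply refers the reader to \cite{Epstein-Notes}. Your write-up therefore goes further than the paper does, extracting the three identities from direct substitutions into the functional equation $F(F(\lambda z))=-\lambda F(z)$ and isolating precisely which structural facts from Epstein (monotonicity of $F$ on $[0,x_0/\lambda]$, and the univalence of $H^{-1}$ on the slit plane) are genuinely needed to pin down $F(\lambda x_0)=x_0$ and the location of the first positive critical point. One small remark: your argument that $F'\neq 0$ on $(0,x_0/\lambda)$ uses $F(z)\in(-1/\lambda,1)$ on that interval, which already presupposes the monotonicity you are importing from Epstein; you flag this dependence in your final paragraph, but it is worth being aware that the ``no earlier critical point'' clause and the monotonicity statement are really the same input, not two independent ones.
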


 A map $g\from U_g\to\mc$ is called an analytic extension of a map $f\from U_f\to \mc$ if
 $f$ and $g$ are equal on some open set. An extension $\hat{f}\from S\supset
 U_f\to\mc$ of $f$ is called the \defn{maximal analytic extension} if every
 analytic extension of $f$ is a restriction of $\hat{f}$. The following
 crucial observation is also due to H.~Epstein
 (cf. \cite{Epstein-89,Epstein-Notes}; see also \cite[\S7.3]{McM}):

\begin{thm} The map $F$ has a maximal analytic extension
$\widehat{F}\from \widehat{W}\to\mc,$ where $\widehat{W}\supset \mr$ is an open simply connected set which is dense in $\mathbb C$.
\end{thm}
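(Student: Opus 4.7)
The plan is to construct $\widehat{F}$ by iteratively enlarging the domain of the quadratic-like restriction $F\from \W\to V$, where $V=\mc\setminus((-\infty,-\tfrac{1}{\lambda}]\cup[\tfrac{1}{\lambda^2},\infty))$, using the Cvitanovi\'c--Feigenbaum equation~\eqref{EqCvitFeig}. Writing the equation as $F(z)=-\tfrac{1}{\lambda}F(F(\lambda z))$ and recalling $|\lambda|<1$, its right-hand side expresses $F$ at a potentially large argument $z$ in terms of values of $F$ at the smaller arguments $\lambda z$ and $F(\lambda z)$. This is the mechanism by which the domain can be pushed outward.

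Concretely, I would set $\W_0=\W$ and inductively define
\[
\W_{n+1}=\W_n\cup\bigl\{z\in\mc:\lambda z\in\W_n\text{ and }F_n(\lambda z)\in\W_n\bigr\},
\]
extending $F_n$ to $F_{n+1}$ on the new locus via the functional equation. Consistency on overlaps is immediate from analytic continuation of~\eqref{EqCvitFeig} throughout the connected set $\W_n$. Setting $\widehat W=\bigcup_n\W_n$ and $\widehat F=\lim F_n$ yields an analytic extension; maximality follows from the identity principle, since any analytic extension of $F$ must automatically satisfy the functional equation and hence coincide with $\widehat F$ wherever both are defined. For $\widehat W\supset\mr$ I would use $F^{2^m}(z)=(-\lambda)^mF(z/\lambda^m)$: because $|\lambda|^{-m}\to\infty$, every real point is eventually pulled back into $\W$ after finitely many extension steps, using \autoref{PropX0} to locate the relevant real critical points and preimages of~$0$.

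The genuinely hard step is simple connectedness of $\widehat W$. I would try induction on $n$: $\W_{n+1}$ is the union of $\W_n$ with the $\lambda^{-1}$-dilate of $F_n^{-1}(\W_n)$. Since $F\from\W\to V$ is a degree-two branched cover onto the simply connected slit plane $V$, a naive pullback could \emph{a priori} introduce a nontrivial loop, and the principal obstacle is to rule this out. One must track how the two slit singularities of $V$ (at $-1/\lambda$ and $1/\lambda^2$) sit relative to the image of $F_n$ at each step, and confirm that no preimage loop ever encloses a deleted point. The real-axis symmetry $F(-z)=F(z)$ and the univalence of $H^{-1}$ on $V$ are the main levers: because the branching and the slits all lie on $\mr$, successive pullbacks of the slits can be arranged to extend one another along the real line rather than wrap around each other. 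This topological bookkeeping is where I expect to invest the most effort, and is the combinatorial heart of Epstein's argument.
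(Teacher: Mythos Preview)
The paper does not supply its own proof of this theorem: it is quoted as a result of Epstein, with a citation to \cite{Epstein-89,Epstein-Notes} (and \cite[\S7.3]{McM}). So there is nothing in the paper to compare your proposal against.

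On the merits of the proposal itself, the inductive extension via the functional equation is the right engine and is essentially Epstein's mechanism, and your argument for $\widehat W\supset\mr$ is fine. But your maximality step has a genuine gap. You argue that any analytic extension $g$ of $F$ satisfies the functional equation and hence agrees with $\widehat F$ on the overlap of domains; that is correct, but it only gives \emph{compatibility}, not containment. You have not shown that the domain of $g$ lies inside $\bigcup_n\W_n$. Your inductive tower is a lower bound for the maximal domain, not obviously all of it: nothing prevents a continuation from reaching a point that your particular recursion never visits. The existence of a maximal extension in the paper's sense (every extension is a \emph{restriction} of $\widehat F$) is equivalent to $F$ having trivial monodromy under all continuations, and that is the substantive claim---it does not follow from the identity principle alone.

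On simple connectedness, you correctly flag it as the heart of the matter, but the sketch (``track how the slit singularities sit relative to the image and confirm no preimage loop encloses a deleted point'') is too schematic to evaluate. In Epstein's treatment the two issues---absence of monodromy and simple connectedness of $\widehat W$---are handled together via the structure of $H$ (with $F(z)=H(z^2)$ and $H^{-1}$ univalent on the doubly slit plane), rather than by separately chasing loops through an iterated pullback. If you want to make your outline into a proof, the missing ingredient is an argument that every germ reachable by analytic continuation of $F$ already appears in some $\W_n$; once you have that, both maximality and simple connectedness of the union are within reach.
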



\begin{figure}[htb]
\centering\includegraphics[width=.75\textwidth]{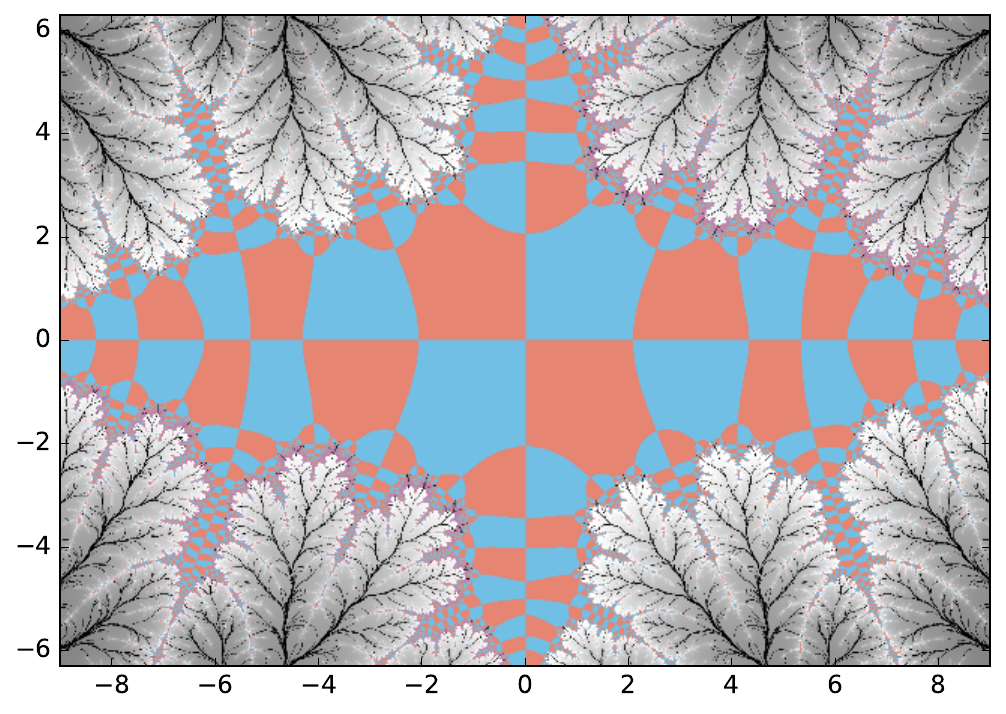}
\caption{\label{PicDomainAnalytic}
An approximation of the domain $\widehat{W}$ on which $\widehat{F}$ is defined.
The red regions are preimages of the upper half plane $\mh_+$, and the blue are
preimages of $\mh_-$.  Recall that $\widehat{W}$ is open and dense in $\C$.
Shown in shades of gray (including black and white) are points that lie in both
$\widehat{W}$ and its complement; the shading should help give some idea of
its structure.
}
\end{figure}

\noindent

Let $\mh_+=\Set{z \st \im z>0}$ be the upper half-plane, and let
$\mh_-=\Set{z \st \im z<0}$ denote the lower half-plane.

\goodbreak
 For a proof of the following, see \cite{Epstein-Notes} or \cite{Buff}:
\begin{thm}\label{ThCrit} All critical points of $\widehat{F}$ are simple. The
  critical values of $\widehat{F}$ are contained in the real axis. Moreover,
  for any $z\in \widehat{W}$ such that $\widehat{F}(z)\notin\mr$, there exists a
  bounded open set $U(z)\ni z$ such that $\widehat{F}$ is one-to-one on $U(z)$
  and $\widehat{F}(U(z))$ coincides with $\mh_+$ or $\mh_-$.
\end{thm}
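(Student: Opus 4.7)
The plan is to extract the structure on $\W$ from the Cvitanovi\'c-Feigenbaum equation~\eqref{EqCvitFeig} and then propagate it to the maximal analytic extension via the scaling identity~\eqref{EqF2m}. On $\W$, the factorization $F(z)=H(z^2)$ with $H^{-1}$ univalent on its slit-plane domain forces $H$ itself to be univalent on its image, hence $H'$ is non-vanishing there. Consequently the unique critical point of $F|_\W$ is $z=0$; it is simple, since $F''(0)=2H'(0)\neq 0$, with critical value $F(0)=1\in\R$. The fact that~\eqref{EqCvitFeig} has a real-coefficient solution gives $F(\bar z)=\overline{F(z)}$, so the real axis is forward-invariant under~$F$, and in particular every forward iterate of~$1$ remains real.

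To pass to the full extension, I would exploit~\eqref{EqF2m} rewritten as $F(w)=(-\lambda)^{-m}F^{2^m}(\lambda^m w)$. Differentiating gives $\widehat{F}'(w)=(-1)^m(F^{2^m})'(\lambda^m w)$, so $w$ is a critical point of $\widehat{F}$ iff $\lambda^m w$ is a critical point of $F^{2^m}$. By the chain rule, $(F^{2^m})'(\zeta)=\prod_{k=0}^{2^m-1}F'(F^k(\zeta))$ vanishes precisely when the $F$-orbit of $\zeta$ meets the critical set of $F|_\W$, which is just $\{0\}$. Because the postcritical orbit of $F$ is an infinite sequence of distinct real points never returning to~$0$ (a standard property of Feigenbaum maps), at most one of these factors vanishes; combined with the simplicity of~$0$ as a critical point of $F|_\W$ and the non-vanishing of the surviving factors, this forces $w$ to be a simple critical point of $\widehat{F}$ with critical value $F^j(1)$ for some $j\ge 0$, which lies in $\R$ by the real-symmetry above.

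For the last assertion, suppose $z\in\widehat{W}$ with $w:=\widehat{F}(z)\in\mh_+$ (the case $\mh_-$ is symmetric). Since the critical values of $\widehat{F}$ all lie in $\R$ and are thus disjoint from the simply connected set $\mh_+$, the local inverse branch of $\widehat{F}$ taking $w$ to $z$ admits analytic continuation along every path in $\mh_+$; monodromy then yields a single-valued univalent holomorphic map $\varphi\from\mh_+\to\widehat{W}$ with $\varphi(w)=z$ and $\widehat{F}\circ\varphi=\mathrm{id}$. Setting $U(z):=\varphi(\mh_+)$ gives an open subset of $\widehat{W}$ on which $\widehat{F}$ is one-to-one with image exactly $\mh_+$.

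The main obstacle I anticipate is verifying that $U(z)$ is \emph{bounded}---\ie that $\varphi$ does not escape to infinity as its argument approaches $\partial\mh_+=\R$. I would expect this to follow from a description of the preimages of~$\R$ under~$\widehat{F}$ as a locally finite real-analytic graph (the Buff tiling used in~\cite{Buff}) whose complementary regions are bounded Jordan domains, each mapped biholomorphically by $\widehat{F}$ onto $\mh_+$ or $\mh_-$. This boundedness ultimately rests on the bounded geometry of the postcritical orbit of $F$---a hallmark of Feigenbaum maps---and is carried out in detail in~\cite{Epstein-89,Epstein-Notes,Buff}.
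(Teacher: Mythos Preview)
The paper does not give its own proof of this theorem; it simply cites \cite{Epstein-Notes} and \cite{Buff}. So there is no in-paper argument to compare against, only the literature you already invoke.

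Your treatment of the first two assertions is essentially the standard one and is correct: univalence of $H$ forces $0$ to be the unique (simple) critical point of $F|_\W$, real-symmetry keeps the critical orbit on $\R$, and the scaling identity~\eqref{EqF2m} together with the chain rule propagates both facts to all of~$\widehat{W}$. One point you leave implicit is that every $w\in\widehat{W}$ actually arises via~\eqref{EqF2m} for some $m$; this is precisely how the maximal extension is constructed in \cite{Epstein-Notes}, so it is fine to cite it.

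The gap is in your argument for the third assertion. From ``all critical values lie in $\R$'' you conclude that the local inverse branch at $z$ continues along every path in $\mh_+$, and then invoke monodromy. But absence of critical values is not sufficient for unrestricted continuation of an inverse branch: continuation along a path $\gamma\subset\mh_+$ can also fail if the lifted path runs off to $\partial\widehat{W}$ or to~$\infty$. In other words, you need $\widehat{F}\from\widehat{F}^{-1}(\mh_+)\to\mh_+$ to be a covering map, and for that you need properness of this restriction---which is exactly the boundedness issue you postpone to the final paragraph. So the existence of $\varphi$ on all of $\mh_+$ and the boundedness of $U(z)$ are not separate difficulties; they are the same difficulty, and your monodromy step assumes what remains to be proved.

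In \cite{Buff} (following \cite{Epstein-Notes}) this is handled not by a monodromy shortcut but by a direct analysis of the preimage $\widehat{F}^{-1}(\R)$: using the scaling structure one shows it is a locally finite real-analytic graph whose complementary faces in $\widehat{W}$ are bounded Jordan domains, each mapped biholomorphically onto $\mh_+$ or $\mh_-$. That structural description simultaneously gives the existence, injectivity, surjectivity onto a half-plane, and boundedness of $U(z)$. Your sketch would become correct if you replaced the monodromy paragraph with a reference to that structural result.
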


\goodbreak
Following \cite{Buff}, we now introduce a combinatorial partition of $\widehat{W}$.

\begin{Def}\label{DefTiles}
Denote by $\Puz$ the set of all connected components of
$\widehat{F}^{-1}(\C\setminus \R)$.
For each non-negative integer $n$, let
  \[\Puz[n]=\Set{\lambda^n P \st P\in\Puz} \]
\end{Def}
\noindent Using the
Cvitanovi\'c-Feigenbaum equation (\ref{EqCvitFeig}), we obtain that for any non-negative integer $m$, the partition $\Puz[m]$ coincides with
the set of connected components of the preimage of $\mc\setminus\mr$ under
$\widehat{F}^{2^m}$.
\begin{Def}\label{DefTiles1} For $k\geq 0$ we will refer to connected components of $\widehat F^{-k}(\mc\setminus\mr)$ as \defn{tiles}.\\
In particular, an element of $\Puz[n]$ is a tile for any $n\ge0$,
as are the half-planes $\mh_+$ and $\mh_-$.
\end{Def}
%
\medskip

\begin{figure}[htb]
 \centering\includegraphics[width=.6\textwidth]{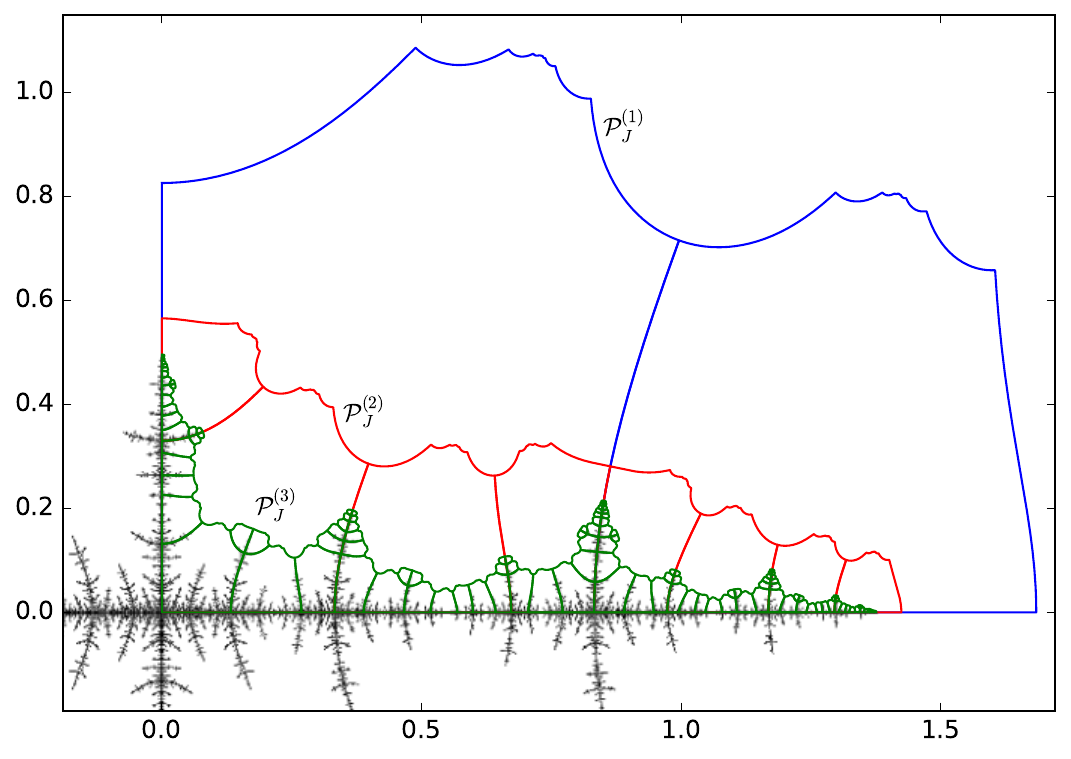}
 \caption{\label{FigTilesWithJ}
   Tiles from $\Puz[1]$~(blue), $\Puz[2]$~(red), and $\Puz[3]$~(green) in
   the first quadrant that intersect the Julia set $J_F$. Compare to
   \autoref{PicDomainAnalytic}. }
\end{figure}

Hence for any tile $P$, there is a $k\ge0$ so that the map $\widehat{F}^k$
sends $P$ bijectively onto $\mh_+$ or $\mh_-$.
Using Theorem \ref{ThCrit} we obtain the following:
\newcommand\refNestingProp{\hyperref[NestingProperty]{nesting property}}
\begin{lem}[Nesting Property]\label{NestingProperty}
Any pair of tiles are either disjoint or one is a
  subset of the other.
\end{lem}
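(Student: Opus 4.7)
The plan is to reduce the statement to the elementary fact that two distinct connected components of a single open set are disjoint. For each $k\ge 0$, set $\Omega_k := \widehat{F}^{-k}(\mc\sm\mr)$, so that tiles of level $k$ are precisely the connected components of $\Omega_k$, and the half-planes $\mh_\pm$ are the components of $\Omega_0$. Note that each $\Omega_k$ is open (as the preimage of an open set under a holomorphic map on $\widehat{W}$), so its connected components are open.

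The first technical ingredient I would verify is that $\widehat{F}$ preserves the real line, i.e.\ $\widehat{F}(\mr)\subset\mr$. This is essentially built into the construction: the Cvitanovi\'c--Feigenbaum equation \eqref{EqCvitFeig} has real coefficients and real-analytic solution $F$, so by the Schwarz reflection principle and the uniqueness of analytic continuation, its maximal analytic extension $\widehat F$ takes real values on $\mr\subset \widehat W$.

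From this I would deduce that the sets $\Omega_k$ form a decreasing filtration $\Omega_0\supset\Omega_1\supset\Omega_2\supset\cdots$. Indeed, if $\widehat{F}^{k+1}(z)\notin\mr$, then $\widehat{F}^{k}(z)\notin\mr$, since otherwise $\widehat{F}^{k+1}(z)=\widehat{F}(\widehat{F}^{k}(z))\in \widehat{F}(\mr)\subset\mr$; iterating gives $\widehat{F}^{j}(z)\notin\mr$ for all $0\le j\le k+1$. Consequently, any tile $Q$ of level $\ell$ is a connected subset of $\Omega_k$ for every $k\le \ell$, so it is contained in a unique connected component of $\Omega_k$, i.e.\ in a unique level-$k$ tile $\tilde{Q}$.

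The proof then concludes as follows. Given two tiles $P$ of level $k$ and $Q$ of level $\ell$ with (WLOG) $k\le\ell$, let $\tilde{Q}\supset Q$ be the level-$k$ tile supplied by the previous step. Since $P$ and $\tilde{Q}$ are both connected components of the same open set $\Omega_k$, they are either equal or disjoint. In the first case $Q\subset\tilde{Q}=P$; in the second $Q\subset\tilde{Q}$ is disjoint from $P$. Either way the nesting dichotomy holds. I do not foresee any real obstacle: the only point that requires attention is the reality of $\widehat{F}$ on $\mr$, and this is immediate from the construction of $\widehat F$ together with the real-analyticity of $F$, after which the argument is purely set-theoretic.
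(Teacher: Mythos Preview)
Your argument is correct. The paper does not spell out a proof but merely writes ``Using Theorem~\ref{ThCrit} we obtain the following,'' so your reduction to the elementary fact that connected components of a nested family of open sets are themselves nested is exactly the kind of argument the paper is gesturing at. The only difference is that you invoke the weaker fact $\widehat{F}(\mr)\subset\mr$ (which follows immediately from the real-analyticity of $F$ and the symmetry of $\widehat{W}$ under conjugation) rather than the full strength of Theorem~\ref{ThCrit}; this is entirely sufficient for the nesting statement, and arguably cleaner, since the bijectivity onto half-planes in Theorem~\ref{ThCrit} is not needed here.
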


\bigskip
Let us describe the structure of $\widehat{F}$ on the real line near the
origin. Since $\widehat{F}$ maps $[1,\tfrac{x_0}{\lambda}]$ homeomorphically
onto $[-\tfrac{1}{\lambda},-\lambda]$,
there exists a unique $a\in (1,\tfrac{x_0}{\lambda})$ such that
$\widehat{F}(a)=-\tfrac{x_0}{\lambda}.$ 

\goodbreak
A proof of the following can be found in \cite{DudkoYampolsky-16}.
\nobreak
\begin{lem}\label{LmCritPoints}
  The map $\widehat{F}$ has exactly~3 critical points the interval $(0,6)$,
  ordered as
  $0 < \tfrac{x_0}{\lambda} < \tfrac{a}{\lambda} < \tfrac{x_0}{\lambda^2}$~.
Furthermore, \[
 \widehat{F}(\tfrac{x_0}{\lambda})=-\tfrac{1}{\lambda},\quad
 \widehat{F}(\tfrac{a}{\lambda})=\tfrac{1}{\lambda^2}.\]
\end{lem}

\begin{figure}[bht]
\centering\includegraphics[height=1.75in]{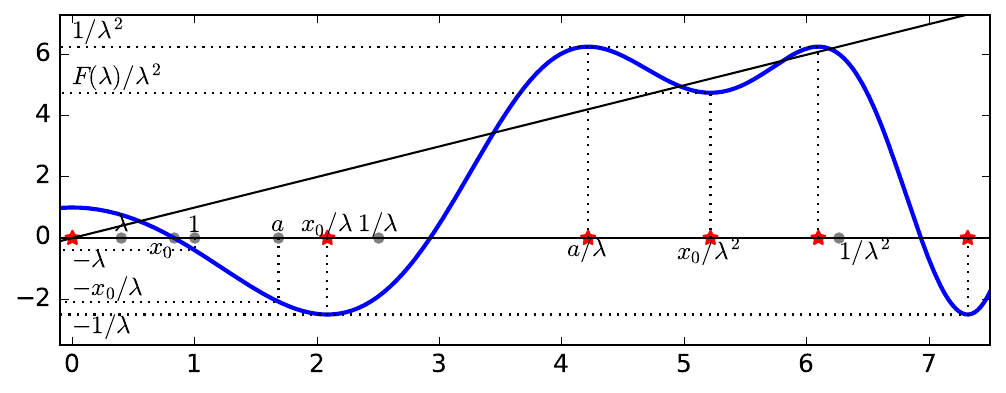} 
\caption{The graph of $\widehat{F}$, showing some dynamically important
points and their images.}
\end{figure}

Applying \autoref{ThCrit} and \autoref{LmCritPoints}, we see  that for
each of the segments 
$[0,\tfrac{x_0}{\lambda}]$, $[\tfrac{x_0}{\lambda},\tfrac{a}{\lambda}]$, and
$[\tfrac{a}{\lambda},\tfrac{x_0}{\lambda^2}]$
there is exactly one tile $P\in\Puz$ in the first quadrant which contains this
segment in its boundary. Notice that $\Puz$ has four-fold symmetry: it is
invariant under multiplication by $-1$ and under complex conjugation.

\begin{Def}\label{PieceDef}
Let $c_j$ be the non-negative real critical points of $\widehat{F}$, with
$0=c_0 < c_1 < c_2 <\ldots$, and for each $j$,
let {$\Piece{j,\I}$} denote the tile of $\Puz$ in the first quadrant with
$[c_j,c_{j+1}]$ in its boundary.

For each $K \in \Set{\II,\III,\IV}$, let {$\Piece{j,K}$} denote the tile in
quadrant $K$ symmetric to $\Piece{j,\I}$ with respect to the imaginary axis,
the origin, or the real axis, respectively.
See \autoref{fig-pieces}.
We will sometimes omit the second
index  (\eg $\Piece{2}$); in this case we will mean any of the four symmetric
tiles  $\Piece{j,\I}, \Piece{j,\II}, \Piece{j,\III},$ or $\Piece{j,\IV}$
(or the appropriate one, depending on context). \\
\indent Consistent with our earlier usage for $\Puz$,
for any set $P$ and any integer $n\ge 0$, we
let ${\Gen[n]{P}} = \lambda^n P$.
\end{Def}
\goodbreak


\begin{figure}[bht]
  \centering\includegraphics[width=0.88\textwidth]{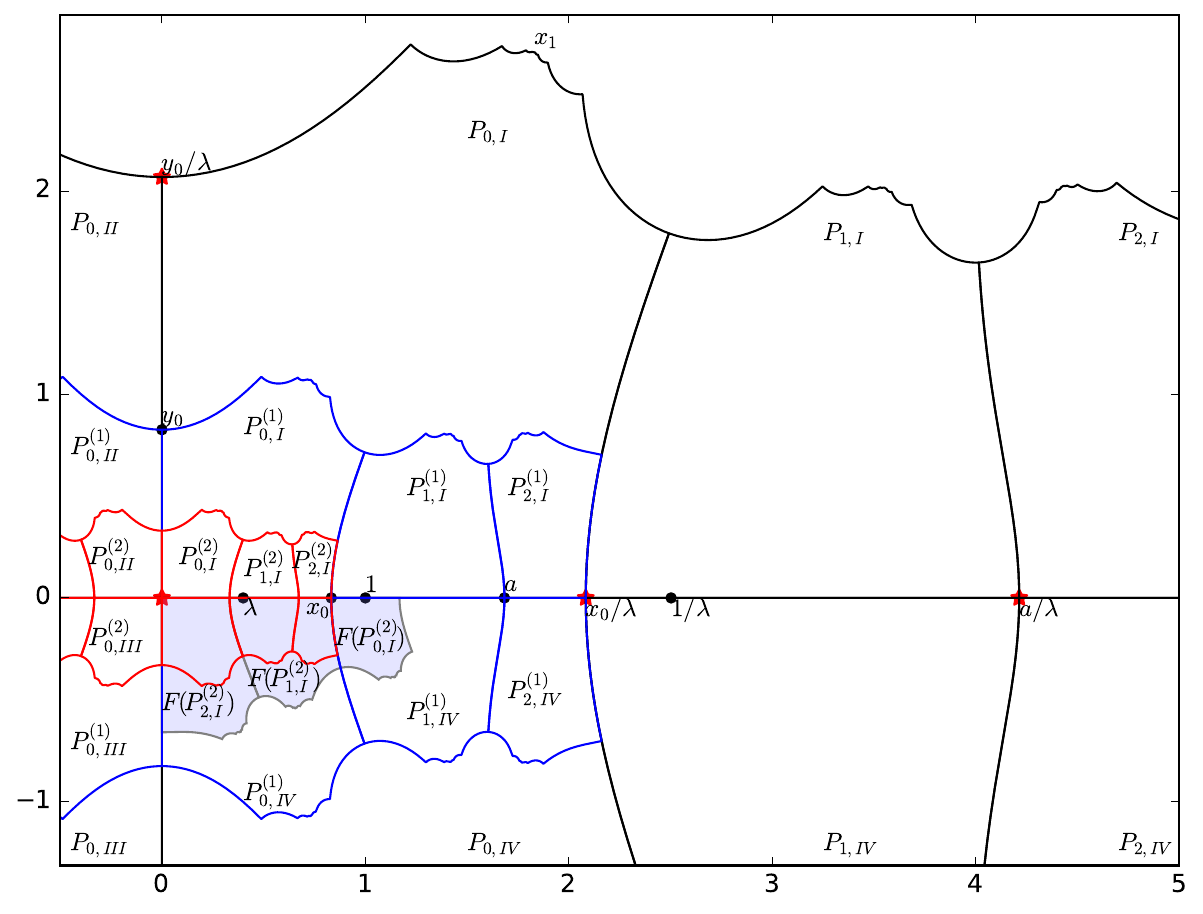} 
\caption{\label{fig-pieces}
  Illustration of \autoref{PieceDef} and
  \autoref{PropTilingStr}, showing several tiles $\Piece[n]{i,K}$ and
  some images under $\widehat{F}$.}
\end{figure}

 \begin{prop}\label{PropTilingStr} The map $\widehat{F}$ satisfies the
   following:
 \begin{enumerate}
 \item \label{FP02}
       $\widehat{F}(\Piece[2]{0,\I}) \;\subset\; \Piece[1]{1,\IV}$\;;
 \item \label{FP12}
        $\Piece[2]{1,\IV} \cup \Piece[2]{2,\IV} \;\subset\;
        \widehat{F}(\Piece[2]{1,\I}) \;\subset\;
        \Piece[1]{0,\IV} \setminus \Piece[2]{0,\IV}$\;;
 \item \label{FP13}
       $\widehat{F}(\Piece[2]{2,\I})\;\subset\; \Piece[1]{0,\IV}$\;;
 \item \label{PropTilingStr-4}
       $\widehat{F}(\Piece[1]{0,\I})=\Piece{0,\IV}$\;, \quad
       $\widehat{F}(\Piece[1]{1,\I})=\Piece{0,\III}$\;.
 \end{enumerate}
 \end{prop}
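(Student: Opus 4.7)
The strategy is to realize each $\widehat{F}(\Piece[n]{j,\I})$ as a tile via the Cvitanovi\'c--Feigenbaum rescaling, and then locate it by computing the image of its real boundary under $\widehat{F}$ and invoking the \refNestingProp. Iterating \eqref{EqCvitFeig} gives $\widehat{F}^{2^n}(\lambda^n w)=(-\lambda)^n\widehat{F}(w)$, so $\widehat{F}^{2^n}$ carries $\Piece[n]{j,\I}=\lambda^n\Piece{j,\I}$ homeomorphically onto $(-\lambda)^n\mh_\pm=\mh_\pm$; hence $\Piece[n]{j,\I}$ is a level-$2^n$ tile and $\widehat{F}(\Piece[n]{j,\I})$ is a level-$(2^n-1)$ tile. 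The quadrant of the image is determined by the quadratic expansion $\widehat{F}(z)=1+cz^2+O(z^4)$ at $0$ with $c<0$ (forcing the first quadrant near $0$ into $\mh_-$ near $1$) together with the alternation of tile types at successive critical points, a consequence of the simplicity of the critical points of $\widehat{F}$ (\autoref{ThCrit}).

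The boundary-tracking step uses
\begin{equation*}
\widehat{F}(0)=1,\;\widehat{F}(\lambda x_0)=x_0,\;\widehat{F}(1)=-\lambda,\;\widehat{F}(\tfrac{x_0}{\lambda})=-\tfrac{1}{\lambda},\;\widehat{F}(a)=-\tfrac{x_0}{\lambda},\;\widehat{F}(\tfrac{a}{\lambda})=\tfrac{1}{\lambda^2}
\end{equation*}
from \autoref{PropX0} and \autoref{LmCritPoints}, together with $\widehat{F}(\lambda a)=\lambda x_0$, which follows from $\widehat{F}^2(\lambda a)=-\lambda\widehat{F}(a)=x_0$ and the monotone bijection $\widehat{F}\colon[0,x_0]\to[0,1]$ that singles out $\lambda x_0$ as the unique preimage of $x_0$ in $[0,x_0]$. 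Using monotonicity of $\widehat{F}$ on each interval free of critical points, this yields
\begin{align*}
\widehat{F}(\lambda[0,c_1])&=[0,1], & \widehat{F}(\lambda^2[0,c_1])&=[x_0,1],\\
\widehat{F}(\lambda^2[c_1,c_2])&=[\lambda x_0,x_0], & \widehat{F}(\lambda^2[c_2,c_3])&=[0,\lambda x_0].
\end{align*}

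Each claim then reduces to a boundary comparison plus nesting. For (i), $\Piece[1]{1,\IV}$ has real boundary $\lambda[c_1,c_2]=[x_0,a]$, which contains $[x_0,1]$ since $1<a$; so the level-$3$ image of $\Piece[2]{0,\I}$ touches $\R$ inside the real boundary of $\Piece[1]{1,\IV}$ from the same ($\mh_-$, fourth-quadrant) side, and the \refNestingProp{} places the finer tile inside the coarser one. Claim (iii) is identical with $[0,\lambda x_0]\subset[0,x_0]$. For (ii), the image is likewise contained in $\Piece[1]{0,\IV}$, while $[\lambda x_0,x_0]$ and $[0,\lambda x_0]$ have disjoint interiors, so nesting excludes it from $\Piece[2]{0,\IV}$; the opposite inclusion $\Piece[2]{1,\IV}\cup\Piece[2]{2,\IV}\subset\widehat{F}(\Piece[2]{1,\I})$ holds because these two level-$4$ tiles have real boundaries $[\lambda x_0,\lambda a]$ and $[\lambda a,x_0]$ whose union is $[\lambda x_0,x_0]$. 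For (iv), containment $\widehat{F}(\Piece[1]{0,\I})\subset\Piece[0]{0,\IV}$ follows from $[0,1]\subset[0,\tfrac{x_0}{\lambda}]$; equality is then forced because $\widehat{F}\colon\Piece[0]{0,\IV}\to\mh_+$ and $\widehat{F}^2\colon\Piece[1]{0,\I}\to\mh_+$ are both homeomorphisms onto the same half-plane, so the contained image must exhaust its container. The companion case $\widehat{F}(\Piece[1]{1,\I})=\Piece[0]{0,\III}$ is handled identically after observing that $\widehat{F}(\Piece{1,\I})=\mh_+$ (rather than $\mh_-$) sends the image into the third quadrant.

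The main obstacle is the orientation bookkeeping --- distinguishing among $\Piece{j,\I}$, $\Piece{j,\II}$, $\Piece{j,\III}$, and $\Piece{j,\IV}$ --- since this is what selects the specific tile on the right-hand side. Once anchored at the origin by the quadratic expansion of $\widehat{F}$ and propagated through the scaling relation, the identifications are routine but must be verified case by case.
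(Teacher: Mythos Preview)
The paper itself does not prove this proposition; it simply cites \cite{DudkoYampolsky-16}. Your approach---recognizing each $\widehat{F}(\Piece[n]{j,\I})$ as a tile via \eqref{EqF2m}, computing the image of its real boundary segment, and then invoking the \refNestingProp---is the natural one, and your boundary computations (including the derivation of $\widehat{F}(\lambda a)=\lambda x_0$) and the orientation tracking are correct.

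One step is a bit compressed. In part~(ii), to exclude $\Piece[2]{0,\IV}$ from the image you use that the real traces $[\lambda x_0,x_0]$ and $[0,\lambda x_0]$ have disjoint interiors. For nesting to then force disjointness, you need that $\overline{\widehat{F}(\Piece[2]{1,\I})}\cap\R$ is \emph{exactly} $[\lambda x_0,x_0]$, not merely contains it; equivalently, that the non-real part of $\partial\Piece[2]{1,\I}$ maps into $\mh_-$ under~$\widehat{F}$. This holds because $\overline{\Piece[2]{1,\I}}\setminus\R$ lies in the open tile $\Piece{0,\I}$ (whose $\widehat{F}$-image is $\mh_-$), but that containment deserves a sentence. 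Alternatively, once (i) and (iv) are in hand, one can argue indirectly: $\widehat{F}$ is a bijection from $\Piece[1]{0,\IV}$ onto $\Piece{0,\I}$, and under it the image tile goes to $\widehat{F}^2(\Piece[2]{1,\I})=-\lambda\,\widehat{F}(\Piece[1]{1,\I})=-\lambda\,\Piece{0,\III}=\Piece[1]{0,\I}$, while $\Piece[2]{0,\IV}$ goes into $\Piece[1]{1,\I}$ by the conjugate of (i); these are disjoint level-$2$ tiles. If you use this route, prove (iv) before (ii).

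Your equality argument in (iv)---matching the two homeomorphisms onto the same half-plane---is exactly what is needed, since the image is \emph{a priori} only known to be contained in the target level-$1$ tile rather than equal to it.
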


\noindent See \autoref{fig-pieces}.
We refer the reader to \cite{DudkoYampolsky-16} for the proof of
\autoref{PropTilingStr}.
Using part~\ref{PropTilingStr-4} of \autoref{PropTilingStr} we obtain

\begin{lem}\label{LmImCrit} Let $y_0$ be such that
$\closure{\Piece[1]{0,\I}} \cap \i\R=[0,y_0]$. Then
\[ \widehat{F}(y_0)=\tfrac{x_0}{\lambda},\quad
   \widehat{F}^3(\lambda y_0)=0, \quad\text{and} \quad
   \lambda y_0\in J_F.\]
\end{lem}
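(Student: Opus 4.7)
The plan is to prove the three claims in the order stated. For the first, $\widehat F(y_0)=x_0/\lambda$, I would apply Proposition \ref{PropTilingStr}(iv), which asserts that $\widehat F$ is a bijection of $\Piece[1]{0,\I}$ onto $\Piece[0]{0,\IV}$. The segment $[0,y_0]$ is a boundary arc of $\Piece[1]{0,\I}$ contained in $\i\R$, so its image under $\widehat F$ lies on $\partial\Piece[0]{0,\IV}$. Because the Cvitanovi\'c--Feigenbaum equation gives $F(z)=H(z^2)$ with $H$ real on the real axis, $\widehat F(y_0)=H(y_0^2)$ is real, and the only real part of $\partial\Piece[0]{0,\IV}$ is the segment $[c_0,c_1]=[0,x_0/\lambda]$; hence $\widehat F(y_0)\in [0,x_0/\lambda]$. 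Next, the endpoint $y_0$ is a corner of $\Piece[1]{0,\I}$ where two distinct branches of $(\widehat F^2)^{-1}(\R)$ meet, so $y_0$ is a critical point of $\widehat F^2$; since $\widehat F'(y_0)=2y_0\,H'(y_0^2)\neq 0$ (using that $H$ is univalent on its domain and $y_0\ne 0$), the chain rule forces $\widehat F(y_0)$ to be a critical point of $\widehat F$. The only critical points of $\widehat F$ in $[0,x_0/\lambda]$ are the endpoints $c_0=0$ and $c_1=x_0/\lambda$. To rule out $0$, I would use the monotonicity of $F$ on the imaginary axis: because $F''(0)=2H'(0)<0$ (as $F$ is unimodal with maximum at $0$) and $H'$ does not change sign on the negative reals (by univalence of $H$), the derivative $\frac{d}{dy}F(\i y)=-2yH'(-y^2)$ is strictly positive for $y>0$, whence $\widehat F(y_0)>F(0)=1>0$. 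This leaves only $\widehat F(y_0)=x_0/\lambda$.

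The remaining two claims follow quickly. Applying \eqref{EqF2m} with $m=1$ at $z=y_0$ gives
\[
\widehat F^2(\lambda y_0)\;=\;-\lambda\,\widehat F(y_0)\;=\;-\lambda\cdot\tfrac{x_0}{\lambda}\;=\;-x_0,
\]
and applying $\widehat F$ once more, using that $F$ is even and $F(x_0)=0$ (Proposition \ref{PropX0}), yields $\widehat F^3(\lambda y_0)=F(-x_0)=F(x_0)=0$. For $\lambda y_0\in J_F$, note that $0\in J_F$: because the Feigenbaum map has an aperiodic infinite critical orbit and no attracting cycles, the interior of $K_F$ is empty, so $J_F=K_F\ni 0$. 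The orbit of $\lambda y_0$ remains in $\W$ throughout the three iterates needed --- the first two iterates lie on short real intervals well inside the basic quadratic-like domain, and from the third on the orbit coincides with the post-critical orbit of $F$, which lies in $K_F\subset\W$ --- so complete invariance of $J_F$ under $F$ gives $\lambda y_0\in F^{-3}(J_F)=J_F$.

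The main obstacle is step 1, specifically showing that the endpoint $y_0$ of the imaginary-axis arc of $\partial\Piece[1]{0,\I}$ is genuinely a critical point of $\widehat F^2$ and that its image under $\widehat F$ is exactly the corner $c_1=x_0/\lambda$. The structural fact that two boundary arcs of a tile in $\Puz[1]$ meet only at critical points of $\widehat F^2$ --- together with the monotonicity of $F$ on the positive imaginary axis to exclude the other candidate $c_0=0$ --- is what makes the identification rigorous; without the former one could imagine $\widehat F(y_0)$ being any point of $[0,x_0/\lambda]$, and without the latter one could not distinguish between the two endpoints of that segment.
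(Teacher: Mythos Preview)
Your proof is correct and follows the same route the paper indicates: the paper derives the lemma directly from part~\ref{PropTilingStr-4} of \autoref{PropTilingStr} without further detail, and you supply those details. The second and third claims are handled exactly as one would expect.

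For the first claim there is a slightly more direct argument than your critical-point detour, which is likely what the paper has in mind. Since $\widehat F\from \Piece[1]{0,\I}\to\Piece[0]{0,\IV}$ is a conformal bijection of bounded Jordan domains, it extends to a homeomorphism of the closures. Within $\overline{\Piece[1]{0,\I}}$ the locus $\widehat F^{-1}(\R)$ is precisely the axis arc $[0,x_0]\cup[0,y_0]$ (because $F=H(z^2)$ with $H$ univalent, so $F(z)\in\R$ forces $z^2\in\R$); hence this arc maps homeomorphically onto the real boundary $[0,x_0/\lambda]$ of $\Piece[0]{0,\IV}$, while the remaining outer arc maps to the outer arc. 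Endpoints go to endpoints, and since $F(x_0)=0$ one gets $\widehat F(y_0)=x_0/\lambda$ immediately, without needing to argue that $y_0$ is a critical point of $\widehat F^2$ or invoke monotonicity to exclude the value~$0$. Your argument is perfectly valid and in fact makes the branching structure at $y_0$ explicit; the boundary-correspondence version just reaches the conclusion with one less step.

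A small precision: when you say ``$H'$ does not change sign on the negative reals,'' what you actually need (and what your argument uses) is that $H'$ does not vanish on the interval $[-|y_0|^2,0]$, which lies in the domain of $H$ because $[0,y_0]\subset \overline{W^{(1)}}\subset W$.
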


For the reader's convenience we list approximate values of some of
the important constants.
\begin{equation*}\label{EqConst}
 \lambda\approx 0.399535, \quad x_0\approx 0.832367 ,\quad
  \widehat{F}(\lambda)\approx 0.758923,\quad a\approx 1.683627
\end{equation*}

\bigbreak

Let $\inter S$ denote the interior of the set $S$.
The set $\W$ introduced in \autoref{SubsecFixP} can be written
\[
 \W=\inter{\closure{\Piece{0,\I}\cup\Piece{0,\II}\cup\Piece{0,\III}\cup\Piece{0,\IV}}}.
\]

\begin{Def}\label{DefWandH}
Define each of the
following quantities:
\begin{itemize}
\item For each $K \in \Set{\I,\II,\III,\IV}$, set
  $H_K=\inter \closure{\Piece{0,K}\cup \Piece{1,K}}$. 
Let $H = \inter{ \closure{H_\I \cup H_\II \cup H_\III \cup H_\IV}}$.
\item For $n\ge 0$, we write $\W[n] = \lambda^n\,\W$ and $\H[n]=\lambda^n H$.
\end{itemize}
\end{Def}

\begin{rem}\label{RemQuadrLike}
 The restriction $\widehat{F}\from \W\to
 \mc\setminus((-\infty,-\tfrac{1}{\lambda}]\cup[\tfrac{1}{\lambda^2},\infty))$
  is a quadratic-like map.
 \end{rem}

\begin{Def}\label{DefDom} Henceforth, we define $F$ to be the restriction of
  $\widehat{F}$ to $\W$. For $n\in\Z_+$, let $F_n$ denote
  the restriction of $F^{2^n}$ to $\W[n]$, that is, the
  $n$-th pre-renormalization of $F$.
\end{Def}

\begin{rem}\label{RemCritOrb}The following observations are immediate from
   the properties of $F$ described above.
\begin{enumerate}
\item If $n$ is odd then $F^n(0)\ge F^3(0)=F(\lambda)\approx 0.758923$.
\item If $F^{k}(0)\in [-\lambda^m x_0,\lambda^m x_0]$
(that is, if $F^k(0)\in \closure{\W[m+1]}$), then $k$ is divisible by $2^m$.
\item\label{RemIterW}
  For all $n\in\N$ and $1\leq k< 2^{n-1}$ one has
  $F^k(\W[n])\cap \W[n]=\emptyset.$
\end{enumerate}
\end{rem}

\begin{rem}\label{RemPullBack}
 For any finite piece of orbit
 $x=x_0,x_1=\widehat{F}(x),\ldots,x_k=\widehat{F}^k(x)$ such that $x_k$ belongs
 to the closure of
 some tile $T$ and  $D\widehat{F}^k(x_0)\neq 0$,
 one can univalently pull back $T$ along $x_0,\ldots,x_k$ in a unique way. In
 particular, this is true under the condition that $x_i\in W$ and $x_i\neq 0$
 for $1\le i \le k-1$.
\end{rem}

\section{Distortion bounds}\label{SecBounds}
\subsection{Copies of tiles}

\begin{Def}\label{DefCopy} 
We will say that a tile $Q$ is a
\defn{copy of the tile $P$ under $F^k$} if there is a non-negative
integer $k$ such that $F^k(Q)=P$.

A copy $P$ of $\Piece[m]{0}$ under $F^k$ is \defn{primitive} if
$F^j(P)\cap \W[m]=\emptyset$ for all  $0\leq j<k$.

A copy $P$ of $\Piece[m]{0}$ under $F^k$ is \defn{separated}
if there exists $0\leq j<k$ with
$F^j(P)\subset \W[m]$ and $F^j(P)\cap \Gen[m-1]{J_F}=\emptyset$ for the maximal such $j$.
\end{Def}

As we shall see, separated copies of $\Piece[m]{0}$ are called this because
they stay away from relevant parts of the postcritical set.
Separated copies play an important role for us, in that they allow us to
have control on the distortion of tiles under iteration of $f$.

\begin{rem}\label{RemSepCop} We make the following useful observations.
\begin{enumerate}
\item If $Q$ is a copy of $P$ under $F^k$ then $F^k:Q\to P$ is a bijection.
\item\label{CopiesofSepAreSep}
  If $Q$ is a copy of a separated copy $T$ then $Q$ is separated.
\item\label{IterofSep} Let $T$ be a separated copy of $\Piece[m]{0}$ with $F^k(T)=\Piece[m]{0}$.
  Then for each $j\le k$, $F^j(T)$ is either a primitive or a separated copy
  of~$\Piece[m]{0}$.
  In particular, the set~$\Piece[m]{0}$ is a primitive copy of itself.
\item Copies of primitive copies need not be primitive or separated.
  For example,  let $T =F(\Piece[2]{0,\I})$, so  $F^2(T)=\Piece[1]{0,\II}$
  and $T \subset \Piece[1]{1,\IV} \not\subset \W[1]$; see \autoref{fig-pieces}.
  Hence $T$ is a primitive copy of $\Piece[1]{0,\II}$.\\
  But $\Piece[2]{0,\I} \subset \W[1]$ and intersects $J_F$, so
  $\Piece[2]{0,\I}$ is neither a primitive nor a separated copy of
  $\Piece[1]{0,\II}$.

\end{enumerate}
\end{rem}

\begin{lem}\label{LmCopiesInWn}
Let $T$ be a copy of $\Piece[m]{0}$ under $F^k$ with $T\subset \W[n-1]$ for some
$n\le m\in\N$. If $T\cap\Gen[n-1]{J_F} \neq \emptyset$, then
$k$ is divisible by  $2^{n-1}$.
Moreover,  $F_{n-1}^\ell$
is defined on $T$ for $\ell=k/2^{n-1}$ and thus $F_{n-1}^\ell(T)=\Piece[m]{0}$.
\end{lem}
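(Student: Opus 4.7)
My plan is: first, use the hypothesis $T\cap \Gen[n-1]{J_F}\neq\emptyset$ to pick a point $z_0$ in this intersection such that the $F$-orbit of $z_0$ avoids the countable backward orbit of the $\beta$-fixed point of the pre-renormalization $F_{n-1}$. This is possible because $T\cap \Gen[n-1]{J_F}$ is uncountable, being a nonempty relatively open subset of the perfect set $\Gen[n-1]{J_F}=J_{F_{n-1}}$. Since this small Julia set is $F_{n-1}$-invariant, $F^{j\cdot 2^{n-1}}(z_0)\in \Gen[n-1]{J_F}\subset \W[n-1]$ for every $j\ge 0$.

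For the divisibility, the crux is the converse direction: for $0<r<2^{n-1}$, $F^r(z_0)\notin \W[n-1]$. This combines two standard features of the period-doubling renormalization fixed point: (a) $J_F\cap \W[n-1]=\Gen[n-1]{J_F}$ (the big Julia set meets $\W[n-1]$ only in the small Julia set at level $n-1$), and (b) the sets $F^r(\Gen[n-1]{J_F})$ for $0\le r<2^{n-1}$ are pairwise disjoint except at $\beta$-fixed points of pre-renormalizations. The generic choice of $z_0$ together with (b) gives $F^r(z_0)\notin \Gen[n-1]{J_F}$, and then (a) upgrades this to $F^r(z_0)\notin \W[n-1]$. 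Since $F^k(z_0)\in F^k(T)=\Piece[m]{0}\subset \W[m]\subset \W[n-1]$ (using $m\ge n$), we conclude $2^{n-1}\mid k$; set $\ell=k/2^{n-1}$.

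For the second conclusion it suffices to show $F^{j\cdot 2^{n-1}}(T)\subset \W[n-1]$ for every $0\le j\le \ell$. The endpoints $j=0$ and $j=\ell$ are immediate. For $0<j<\ell$, the image $F^{j\cdot 2^{n-1}}(T)$ is a tile containing the point $F^{j\cdot 2^{n-1}}(z_0)\in \W[n-1]$, so by the Nesting Property (\autoref{NestingProperty}) it is either contained in one of the pieces $\Piece[n-1]{0,K}\subset \W[n-1]$ or strictly contains such a piece. In the second case, one more block of $2^{n-1}$ iterates would give $F^{(j+1)\cdot 2^{n-1}}(T)\supseteq F^{2^{n-1}}(\Piece[n-1]{0,K})$; but writing $\Piece[n-1]{0,K}=\lambda^{n-1}\Piece{0,K}$ and applying the Cvitanovi\'c--Feigenbaum equation \eqref{EqCvitFeig} together with \autoref{ThCrit} identifies the right-hand side as $\mh_+$ or $\mh_-$, which is unbounded, whereas $F^{(j+1)\cdot 2^{n-1}}(T)\subset \W$ is bounded; contradiction. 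Hence $F^{j\cdot 2^{n-1}}(T)\subset \W[n-1]$ for every $j$, and $F_{n-1}^\ell$ is defined on $T$ with $F_{n-1}^\ell(T)=F^k(T)=\Piece[m]{0}$.

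The main obstacle I anticipate is rigorously justifying the two renormalization facts (a) and (b) used in the divisibility step. Both are well known for infinitely renormalizable Feigenbaum-type maps, but depending on what has already been established, one may need to either appeal to the Epstein and Avila--Lyubich framework already present in the references, or write out a short direct argument based on the quadratic-like structure of each pre-renormalization $F_{n-1}$ together with the forward invariance of $J_F$.
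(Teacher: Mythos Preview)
Your argument for the second conclusion (that $F_{n-1}^\ell$ is defined on $T$) is correct, and is essentially a more explicit version of the paper's terse inductive argument using the \refNestingProp.

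However, your divisibility argument has a genuine gap: your fact~(a), namely $J_F\cap\W[n-1]=\Gen[n-1]{J_F}$, is \emph{false}, not merely hard to justify. Here is a concrete counterexample for $n=2$. Let $\beta$ be the repelling fixed point of $F$ on the negative real axis; from $F(-1)=-\lambda>-1$ and $F(-x_0/\lambda)=-1/\lambda<-x_0/\lambda$ one gets $\beta\in(-x_0/\lambda,-1)$, so $\beta\notin\W[1]$ while $-\beta\in(1,x_0/\lambda)$. Since $F$ is real on $i\R$ with $F(0)=1$ and $F(y_0)=x_0/\lambda$ (\autoref{LmImCrit}), the intermediate value theorem produces $it_0\in(0,y_0)\subset\W[1]$ with $F(it_0)=-\beta$. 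Then $it_0\in J_F\cap\W[1]$, but $F^2(it_0)=\beta\notin\W[1]$, so $it_0\notin\lambda J_F=\Gen[1]{J_F}$.

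Even the weaker statement you actually need --- that for a generic $z_0\in\Gen[n-1]{J_F}$ one has $F^r(z_0)\notin\W[n-1]$ for all $0<r<2^{n-1}$ --- fails. The $\alpha$-fixed point of $F$ (which is the $\beta$-fixed point of $F_1$) lies in the \emph{open} set $\W[1]$, so every $z_0\in\lambda J_F$ sufficiently close to $\alpha$ has $F(z_0)\in\W[1]$. This is a relatively open, hence uncountable, set of $z_0$, not a countable exceptional set avoidable by a generic choice.

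The paper sidesteps this by tracking the tile rather than a point. If $2^{n-1}\nmid k$, set $j=2^{n-1}-(k\bmod 2^{n-1})\in(0,2^{n-1})$, so $F^j(\Piece[m]{0})=F_{n-1}^s(T)$ for some $s$; since $T\cap\Gen[n-1]{J_F}\neq\emptyset$ and $\Gen[n-1]{J_F}$ is $F_{n-1}$-invariant, $F^j(\Piece[m]{0})$ meets $\Gen[n-1]{J_F}$, and the \refNestingProp\ forces $F^j(\Piece[m]{0})\subset\W[n-1]$. Hence $F^j(0)\in\overline{\W[n-1]}$, and \autoref{RemCritOrb} gives $j=2^{n-2}$; but $F^{2^{n-2}}(0)=(-\lambda)^{n-2}\notin\overline{\W[n-1]}$. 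The decisive leverage is that $0\in\overline{\Piece[m]{0}}$, so one can invoke the explicitly known critical orbit instead of any structural claim about $J_F\cap\W[n-1]$.
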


\begin{proof} Let $T$ be as in the conditions of the lemma. Assume that $k$ is not divisible by~$2^{n-1}$.  Let
$r$ be the remainder of $k$ modulo $2^{n-1}$ and let $j=2^{n-1}-r$. Then
$F^{j}(\Piece[m]{0})=F^{2^{n-1}s}(T)$ for some integer~$s$, therefore,
\[
   F^{j}(P_{0,J}^{(m)})\cap \Gen[n-1]{J_F}\neq\emptyset.
\]
By the nesting property (\autoref{NestingProperty}), we see that
$F^{j}(\Piece[m]{0})\subset \W[n-1]$ and hence $F^j(0)\in\W[n-1]$.
\autoref{RemCritOrb} implies that $j=2^{n-2}$. But
$F^{2^{n-2}}(0)=(-\lambda)^{n-2}$ does not belong to $\W[n-1]$. This
contradiction proves the first statement of the lemma.

To prove the second statement we show by induction that for all
$0\le j<\ell$ one has $F_{n-1}^j(T)\subset \W[n-1]$ and
$F_{n-1}^j(T)\cap\Gen[n-1]{J_F}\neq\emptyset$. The base
case of the induction $j=0$ is given by
the conditions of the lemma. The induction step follows from the
\refNestingProp\ and the fact that $\Gen[n-1]{J_F}$ is $F_{n-1}$-invariant.
\end{proof}

\begin{lem}\label{LmSeparated}
Let $T$ be a separated copy of $\Piece[m]{0}$.
Then $\DS T\cap \Gen[m-1]{J_F}=\emptyset.$
\end{lem}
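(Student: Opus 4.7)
The plan is to argue by contradiction, combining the forward $F$-invariance of $J_F$ with the self-similar renormalization structure of the Feigenbaum Julia set. Let $j^* \in [0,k)$ be the maximal index with $F^{j^*}(T) \subset \W[m]$ provided by the definition of separated copy; that definition gives $F^{j^*}(T) \cap \Gen[m-1]{J_F} = \emptyset$. Suppose for contradiction that there exists $z \in T \cap \Gen[m-1]{J_F}$; the strategy is to show $F^{j^*}(z) \in \Gen[m-1]{J_F}$, in direct conflict with the separatedness condition.

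Since $\Gen[m-1]{J_F} \subset J_F$ and $J_F$ is forward $F$-invariant, the orbit $z_i := F^i(z)$ remains in $J_F$. In particular $z_{j^*} \in F^{j^*}(T) \cap J_F \subset J_F \cap \W[m]$. It therefore suffices to establish the inclusion $J_F \cap \W[m] \subset \Gen[m-1]{J_F}$, since this places $z_{j^*}$ in $F^{j^*}(T) \cap \Gen[m-1]{J_F}$ and yields the contradiction. This inclusion is the crux of the proof and reflects the renormalization self-similarity of $J_F$: via the scaling identity \eqref{EqF2m}, the map $F_{m-1}= F^{2^{m-1}}|_{\W[m-1]}$ is conjugate up to a sign to $F$ under the dilation $w \mapsto \lambda^{m-1} w$, and $\Gen[m-1]{J_F}$ is identified with the filled Julia set $K_{F_{m-1}} = \{w \in \W[m-1] \st F_{m-1}^i(w) \in \W[m-1] \text{ for all } i \ge 0\}$. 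The standard renormalization identity $J_F \cap \W[n] = \Gen[n]{J_F}$, combined with the nesting $\Gen[m]{J_F} \subset \Gen[m-1]{J_F}$ (which itself follows from $\lambda J_F \subset J_F$), then supplies exactly what is needed.

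The main obstacle is this identification step; the remainder of the argument is a direct unpacking of definitions and $F$-invariance. A purely combinatorial alternative would avoid the renormalization identity altogether: one would first use the \refNestingProp\ together with the injectivity of $F^k|_T$ to show $T \subset \W[m-1]$ (a tile containing one of the four pieces comprising $\W[m-1]$ is incompatible with $F^k|_T$ being univalent, as critical points of iterates of $F$ of order $<k$ accumulate in such a piece); then apply \autoref{LmCopiesInWn} with $n = m$ to obtain $2^{m-1} \mid k$; and finally exploit Remark~\ref{RemCritOrb}(iii) together with the $\ge 2^{m-1}$ spacing of the times $\{j<k:F^j(T)\subset\W[m]\}$ to force $j^*$ itself to be a multiple of $2^{m-1}$. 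Once this divisibility is in hand, the $F_{m-1}$-invariance of $\Gen[m-1]{J_F}$ immediately yields $z_{j^*} = F_{m-1}^{j^*/2^{m-1}}(z) \in \Gen[m-1]{J_F}$, giving the same contradiction.
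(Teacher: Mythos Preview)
Your main approach rests on the inclusion $J_F \cap \W[m] \subset \Gen[m-1]{J_F}$, which you derive from the ``standard renormalization identity'' $J_F \cap \W[n] = \Gen[n]{J_F}$. Neither statement holds. The little Julia set $\Gen[n]{J_F}=K_{F_n}$ consists of points whose $F_n$-orbit never leaves $\W[n]$, but $J_F\cap\W[n]$ contains many other points: pick any $w\in J_F\setminus\W[m-1]$ off the real axis, pull it back once by $F_{m-1}$ to a point $v\in\W[m-1]$, and then pull $v$ back by $F_{m-1}|_{\W[m]}$ (which maps $\W[m]$ onto half of $\W[m-1]$) to obtain $z\in\W[m]$. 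By total invariance $z\in J_F$, yet $F_{m-1}^2(z)=w\notin\W[m-1]$, so $z\notin K_{F_{m-1}}=\Gen[m-1]{J_F}$. Thus knowing only that $F^{j^*}(z)\in J_F\cap\W[m]$ does not place $F^{j^*}(z)$ in $\Gen[m-1]{J_F}$.

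Your combinatorial alternative also breaks down at the key step. The $\ge 2^{m-1}$ spacing of the visit times $\{j<k:F^j(T)\subset\W[m]\}$ given by \autoref{RemCritOrb}\ref{RemIterW} does not force each such time to be a \emph{multiple} of $2^{m-1}$; spacing and divisibility are different. In fact, once one knows (via \autoref{LmCopiesInWn}) that every $T_r=F_{m-1}^r(T)$ meets $\Gen[m-1]{J_F}$, the separatedness hypothesis forces $j^*$ \emph{not} to be a multiple of $2^{m-1}$, the opposite of what you want. The paper's proof proceeds from exactly this point: it shows $T_r\subset\Gen[m]{H}$ for all $r$ (using $\Gen[m-1]{J_F}\subset\Gen[m]{H}$ and nesting), writes $j^*=2^{m-1}r+j$ with $1\le j<2^{m-1}$, pins down $j=2^{m-2}$, and then obtains the contradiction from the explicit computation $F^{2^{m-2}}(\Gen[m]{H})\subset\W[m-2]\setminus\W[m]$ via \autoref{PropTilingStr}. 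That last geometric input is what your argument is missing.
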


\begin{proof} Assume that there exists a separated copy $T$ of $\Piece[m]{0}$ such that $\DS T\cap \Gen[m-1]{J_F}\neq\emptyset.$ Let $k$ be such that $F^k(T)=\Piece[m]{0}$.
Set $\ell=\lfloor \tfrac{k-1}{2^{m-1}} \rfloor$, where
$\lfloor\cdot\rfloor$ denotes the integer part of a number,
and set $T_r=F^{2^{m-1}r}(T)$ for $0\le r \le \ell.$ Then
$T_r\cap \Gen[m-1]{J_F}\neq \emptyset$, and thus by the \refNestingProp,
$T_r\subset \Gen[m]{H}$ for all $r\le \ell$
(recall the definition of $H$ in \autoref{DefWandH}).

\begin{figure}[htbp]
\centering\includegraphics[width=0.7\textwidth]{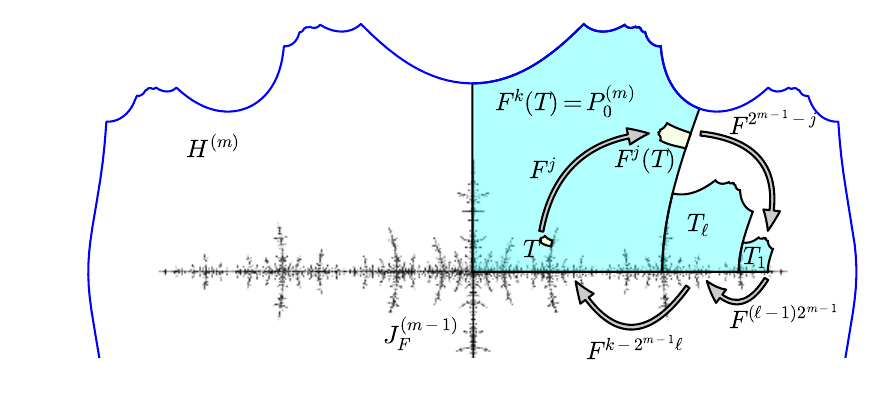}
\caption{\autoref{LmSeparated} shows the above situation does not occur, that
  is, a separated copy $T$ of $\Piece[m]{0}$ cannot intersect~$\Gen[m-1]J_F$.
  (This illustrates the case $r=0$ of the proof.)}
\end{figure}

Since $T$ is separated there exists $0\le r\le \ell$ and
$1\le j < 2^{m-1}$ such that $F^j(T_r)\subset \W[m]$ and
$F^j(T_r)\cap \Gen[m-1]{J_F}=\emptyset$.
Taking into account that
$$T_r\subset \Gen[m]{H} \subset \Gen[m-1]{W}
  \;\;\text{and}\;\; F^j(T_r)\subset\Gen[m]{W} \subset \Gen[m-1]{W},$$
from \autoref{RemCritOrb}\ref{RemIterW} we obtain that $j\geq 2^{m-2}$.
Similarly, since
\[ F^{2^{m-1}}(T_r)\subset
   F^{2^{m-1}}(H^{(m)})=(-\lambda)^{m-1}F(\Gen[1]{H})\subset \Gen[m-1]{W}
\]
we know that $2^{m-1}-j\geq 2^{m-2}$ by again using
\autoref{RemCritOrb}\ref{RemIterW}. 
It follows that $j=2^{m-2}$. 
But
\[
F^{2^{m-2}}(\Gen[m]{H})\;=\;
(-\lambda)^{m-2}F(\Gen[2]{H}) \;\subset\; \W[m-2]\setminus\W[m]
\]
(see \autoref{PropTilingStr}). 
This contradiction finishes the proof.
\end{proof}

\subsection{Koebe space}
In what follows, we use $\sign(P)$ to represent the sign of the real part of
the points in $P$, that is, $\sign(P)=-1$ if $P$ is in the left half-plane,
$+1$ otherwise.

\begin{prop}\label{PropC2}
Let $T$ be a primitive or a separated copy of $\Piece[m]{0}$ under $F^k$
with $m\ge 2$. Then the inverse branch $\phi\from \Piece[m]{0}\to T$ of
$F^k$ analytically continues to a univalent map on
$\sign(\Piece[m]{0})\,\lambda^m\C_\lambda$
where\footnote{%
  The set $\C_\lambda$ should not be confused with the
  slightly larger set
  $F(\W)=\C\setminus\left(
      (-\infty,-\tfrac{1}{\lambda}]
      \cup[\tfrac{1}{\lambda^2},\infty)\right)$
.}
\[
\C_\lambda=\C\setminus\left(
  (-\infty,-\tfrac{1}{\lambda}]\cup[\tfrac{F(\lambda)}{\lambda^2},\infty)
 \right).
\]
\end{prop}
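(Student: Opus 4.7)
The plan is to pull back the Koebe region $\sign(\Piece[m]{0}) \lambda^m \C_\lambda$ along the orbit $T_j := F^j(T)$ one step of $\widehat F$ at a time (descending from $j = k$ to $j = 0$), verifying at each step that univalent extension persists. Since all critical values of $\widehat F$ are real by \autoref{ThCrit}, and
\[
\sign(\Piece[m]{0}) \lambda^m \C_\lambda \cap \R \;=\; \sign(\Piece[m]{0})\cdot\bigl(-\lambda^{m-1},\; \lambda^{m-2} F(\lambda)\bigr),
\]
the only potential obstruction is a critical value of some relevant iterate of $\widehat F$ landing inside this real interval. The four-fold symmetry of $\Puz$ (invariance under $z \mapsto -z$ and $z \mapsto \bar z$) reduces the problem to the case $\sign(\Piece[m]{0}) = +1$.

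First I would dispatch the base case $k = 0$, which asks that $\Piece[m]{0} \subset \lambda^m \C_\lambda$; rescaling by $\lambda^{-m}$ reduces this to $\Piece{0,\I} \subset \C_\lambda$, which follows from \autoref{PropTilingStr} and \autoref{LmCritPoints} together with the numerical inequality $\tfrac{x_0}{\lambda} < \tfrac{F(\lambda)}{\lambda^2}$. For $T$ primitive, the orbit satisfies $T_j \cap \W[m] = \emptyset$ for $0 \le j < k$, and I would run a downward induction on $j$: at each stage the real trace of the pulled-back region stays inside $(-\lambda^{m-1}, \lambda^{m-2} F(\lambda))$, and any critical value of $\widehat F$ encountered must in fact lie in the excluded slits of $\lambda^m \C_\lambda$, because the renormalization relation \eqref{EqF2m} pushes critical values at scales shallower than $m$ outside this real interval. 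Combined with containment $T_j \subset \widehat W$ (via the tile structure of \autoref{PropTilingStr}), this yields univalent extension at each step.

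For $T$ a separated copy, let $j^* < k$ be maximal with $T_{j^*} \subset \W[m]$ and $T_{j^*} \cap \Gen[m-1]{J_F} = \emptyset$. I would split the inverse branch at $j^*$: the stage from $\Piece[m]{0}$ back to $T_{j^*}$ is a primitive sub-problem, handled by the previous paragraph. The stage from $T_{j^*}$ back to $T_0 = T$ may pass through $\W[m]$, but \autoref{LmSeparated} and \autoref{LmCopiesInWn} allow factoring the intervening iterates of $F$ through iterates of $F_{m-1}$; the Cvitanovi\'c--Feigenbaum equation \eqref{EqF2m} then conjugates this via the rescaling $z \mapsto \lambda^{-(m-1)} z$ to an analogous problem one level down, in which $\lambda^m \C_\lambda$ becomes $\lambda \C_\lambda$. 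The main obstacle is bookkeeping the compatibility of the Koebe space across this rescaling: concretely, checking $\lambda \C_\lambda \subset F(\W) = \C \setminus((-\infty, -\tfrac1\lambda] \cup [\tfrac{1}{\lambda^2}, \infty))$, which reduces to the numerical inequalities $\tfrac1\lambda \ge 1$ and $\tfrac{1}{\lambda^2} \ge \tfrac{F(\lambda)}{\lambda}$, both of which hold for the Feigenbaum values of $\lambda$ and $F(\lambda)$.
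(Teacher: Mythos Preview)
Your proposal has genuine gaps in both the primitive and the separated cases, and its overall architecture differs substantially from what the paper does.

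\textbf{Primitive case.} The assertion that ``the real trace of the pulled-back region stays inside $(-\lambda^{m-1},\lambda^{m-2}F(\lambda))$'' is the entire content of the proposition and cannot be taken for granted in an induction. After even one pullback by a branch of $\widehat F^{-1}$, the region $\phi_j(\lambda^m\C_\lambda)$ may meet $\R\cup i\R$ in a set you have not controlled; your claim that ``the renormalization relation pushes critical values at scales shallower than $m$ outside this real interval'' is not an argument. Concretely, the paper's own proof shows that even for primitive $T$ the case $k=2^{m-1}$ is not excluded by any crude scale argument: one has $F^{2^{m-1}}(0)=(-\lambda)^{m-1}$, whose absolute value is well inside the allowed interval, and ruling it out requires the additional sign computation $\sign(\Piece[m]{0})=(-1)^m$ coming from the specific tile combinatorics of \autoref{PropTilingStr}. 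Your inductive sketch contains nothing that would detect this.

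\textbf{Separated case.} Two concrete errors. First, ``the stage from $\Piece[m]{0}$ back to $T_{j^*}$ is a primitive sub-problem'' is false: by your choice of $j^*$ you have $T_{j^*}\subset\W[m]$, so $F^0(T_{j^*})\cap\W[m]\neq\emptyset$ and $T_{j^*}$ is not a primitive copy of $\Piece[m]{0}$ under $F^{k-j^*}$ (it is in fact separated, by \autoref{RemSepCop}). Second, you invoke \autoref{LmCopiesInWn} to factor through $F_{m-1}$, but that lemma's hypothesis is $T\cap\Gen[n-1]{J_F}\neq\emptyset$, which is precisely what \autoref{LmSeparated} (which you also cite) rules out for separated copies. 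The two lemmas apply under mutually exclusive hypotheses, so they cannot be combined as you propose.

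\textbf{Comparison with the paper.} The paper does not split into primitive/separated, nor does it attempt to track pulled-back regions step by step. Instead it takes a minimal-$k$ counterexample, uses \autoref{RemPullBack} to get the inverse branch $\phi$ on all of $\mathbb H_+$, and locates where the critical point $0$ appears on $\partial\phi(\mathbb H_+)$. A short monotonicity argument on the real axis forces $\phi(0)\in\R\cup i\R$, and then \autoref{RemCritOrb} pins $k$ down to one of four explicit values ($2^{m-2}j$ with $j$ odd, $2^{m-1}$, $2^m$, $3\cdot 2^{m-1}$), each eliminated by a direct computation using \autoref{PropTilingStr}, \autoref{LmImCrit}, and \autoref{LmSeparated}. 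The point is that the obstruction is global (it is about $F^k(0)$, not about single applications of $\widehat F$), and the argument must engage with the finite list of possible $k$'s rather than a generic inductive step.
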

\begin{figure}[htbp]
\centering\includegraphics[height=6\baselineskip]{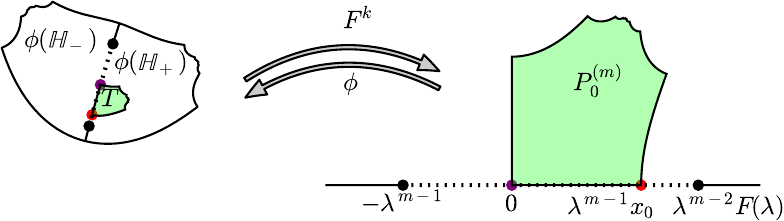}
\caption{Illustration of the statement of \autoref{PropC2},
  which says (roughly) that primitive and separated copies of $\Piece[m]{0}$
  have a definite Koebe space around them. }
\end{figure}
\begin{proof}
 Assume that the statement of \autoref{PropC2} is false.
Let $m\ge 2$ and $T$ be either a primitive or a separated copy of $\Piece[m]{0}$ such that the
inverse branch of $F^{-k}\from \Piece[m]{0}\to T$ does not have a univalent
continuation on $\sign(\Piece[m]{0})\,\lambda^m\C_\lambda$. Take
$k$ to be the minimal integer such that there is no such continuation.

Since $F$ is real analytic, without loss of generality we may also assume
that $\Piece[m]{0}\subset \mh_+$, that is, it is $\Piece[m]{0,\I}$ or
$\Piece[m]{0,\II}$. By \autoref{RemPullBack},
$F^{-k}$ has a univalent analytic continuation $\phi$ on $\mh_+$ which can be
 extended to a continuous function on $\closure{\mh_+}$.
Since $k$ is defined to be the minimal value such that $F^{-k}$ does not
extend, the unique critical point $0$ must be in $\closure{\phi(\mh_+)}$;
moreover,
 \begin{equation}\label{EqFk01}
 F^k(0)\in \sign(\Piece[m]{0})(-\lambda^{m-1},\lambda^{m-2}F(\lambda)),
 \;\;\text{ and in particular}\;\;
 |F^k(0)|<\lambda^{m-2}F(\lambda).
\end{equation}

Suppose that $\phi(0)\notin \mr$. Let $\ell$ be the minimal integer such that
$F^{\ell+1}(\phi(0))\in\R$, so $F^\ell(\phi(0))\in\i\R$ and
$\closure{F^\ell(T)}\cap\i\R$ is a non-empty segment.
Since $T$ is primitive or separated, $\closure {F^\ell(T)}$ cannot contain $0$. Therefore,
$\closure{F^\ell(T)}$ is disjoint from $\R$.
Let $[x,y]$ denote the segment $\closure{F^\ell(T)}\cap\i\mr$ where $|x|<|y|$.
See \autoref{FigEq0inI}.

\begin{figure}[htbp]
\centering\includegraphics[height=6\baselineskip]{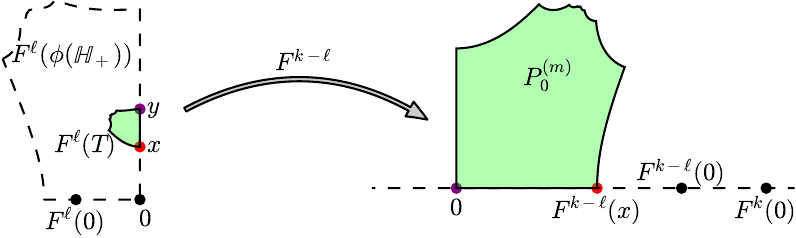}
\caption{\label{FigEq0inI} Illustration of formula~\eqref{Eq0inI}:
  if $\phi(0)\not\in\R$, then $F^{k-\ell}(0)$ lies between $0$ and $F^k(0)$. }
\end{figure}

Clearly, $F^{k-\ell}(x)$ is real and $F^{k-\ell}(x)\subset \closure{\Piece[m]{0}}$.
It follows that
$(x,0]\cup [0,F^\ell(0))\subset F^\ell(\phi(\closure{\mh_+}))$
and $F^{k-\ell}$ is one-to-one on $(x,0]\cup [0,F^\ell(0))$. Thus if $\ell$
is nonzero, we will have
\begin{equation}\label{Eq0inI}
F^{k-\ell}(0) \;\in\; F^{k-\ell}\bigl((x,0]\cup [0,F^\ell(0))\bigr)
  =(F^{k-\ell}(x),F^{k}(0)) \;\subset\; (0,F^{k}(0)),
\end{equation}
since the segment $(F^{k-\ell}(x),F^{k}(0))$ is disjoint from
$\closure{\Piece[m]{0}}$.
By the definition of $k$, we obtain that $\ell=0$.  Consequently,
$\phi(0)\in \i\mr\cup\mr$.

\medskip
By \autoref{RemCritOrb} and \eqref{EqFk01}, $k$ is divisible by $2^{m-2}$.
If $k\ge 2^{m+1}$, then since
$0\in\closure{\phi(\mh_+)}$ the \refNestingProp\ tells us that we must have
$T\subset\phi(\mh_+)\subset \W[m+1]$. In particular, $T$ is not primitive and so must be separated.
Since $\phi(0)$ must be either real or purely imaginary, and since
$$\W[m+1]\cap \Gen[m-1]{J_F} \;\supset\; \W[m+1]\cap (\i\mr\cup \mr),$$
we obtain that
$T\cap \Gen[m-1]{J_F}\neq\emptyset$, contradicting the hypothesis of $T$ being
separated.  Hence, we must have $k<2^{m+1}$.

This leaves us with four possibilities, each of which we rule out now.
\goodbreak

\begin{description}
\item[Case $\mathbf{k=2^m}$]
In this case, $T\subset\phi(\mh_+)=\Piece[m]{0,K}$ for some $K$.
In particular, $T$ is not primitive.
Since
$F^{2^m}\!\left(\sign(\Piece[m]{0,K})\lambda^m x_0\right)=0$
and $\sign(\Piece[m]{0,K})\lambda^m x_0\in \partial \phi(\mh_+)$,
we have $\sign(\Piece[m]{0,K})\lambda^m x_0=\phi(0)\in\partial T$
and therefore $T\cap J_F^{(m-1)}\neq \emptyset$. From \autoref{LmSeparated},
this means $T$ is not separated, giving a contradiction.

\begin{figure}[htbp]
 \centering\includegraphics[height=6.0\baselineskip]{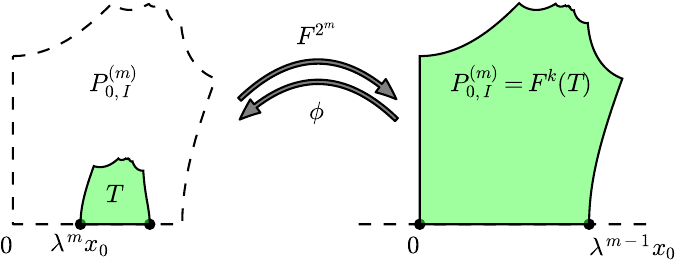}
 \caption{A case where $k=2^m$ in the proof of \autoref{PropC2}.}
\end{figure}

\item[Case $\mathbf{k=2^{m-1}}$]
 Then $\phi(\mh_+)=\Piece[m-1]{0}$. Since by \eqref{EqF2m} we
 have $F^{2^m}(z)=(-\lambda)^mF(z/\lambda^m)$, we can apply
 \autoref{PropTilingStr} to see that
 $T=\pm\lambda^{m-1} F(\Piece[2]{i,K})$ with $i=0$ or $i=1$ and $K=\I$ or
 $K=\IV$. Since $T$ is either primitive or separated, $i\neq 1$. Again using \eqref{EqF2m} we
 obtain
 $$
\Piece[m]{0}=F^{2^{m-1}}(T)=(-\lambda)^{m-1}F^2(\Piece[2]{0,K})
  =(-\lambda)^{m}F(\Piece[1]{0,K})$$
and thus $\sign(\Piece[m]{0})=(-1)^m$. This contradicts \eqref{EqFk01}, since
$F^{2^{m-1}}(0)=(-\lambda)^{m-1}$.

\begin{figure}[bht]
\centering\includegraphics[height=6.0\baselineskip]{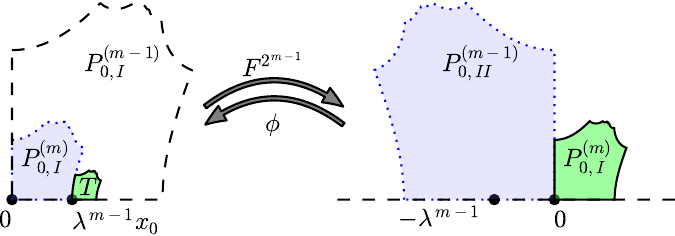}
\caption{In the proof of \autoref{PropC2}, the case where $k=2^{m-1}$.}
\end{figure}

\goodbreak
\item[Case $\mathbf{k=3\cdot 2^{m-1}}$]
Here we have $\Piece[m]{0,K}\supset\phi(\mh_+)\supset \Piece[m+1]{0,K}$ for
some $K$.
In particular, $T$ is not primitive. Moreover, by definition of $y_0$ (see \autoref{LmImCrit}),
$\closure{\phi(\mh_+)}$ will contain either $y=\lambda^my_0$ or
$y=-\lambda^my_0$ which are points of $\Gen[m-1]{J_F}$.
Since
$ F^{3\cdot 2^{m-1}}(y)=(-\lambda)^{m-1}F^3(\lambda y_0)=0$,
we obtain that $y=\phi(0)\in\closure T$, and hence
$T$ intersects $\Gen[m-1]{J_F}$, contradicting the hypothesis that $T$ is
separated.

\begin{figure}[htbp]
\centering\includegraphics[height=6.0\baselineskip]{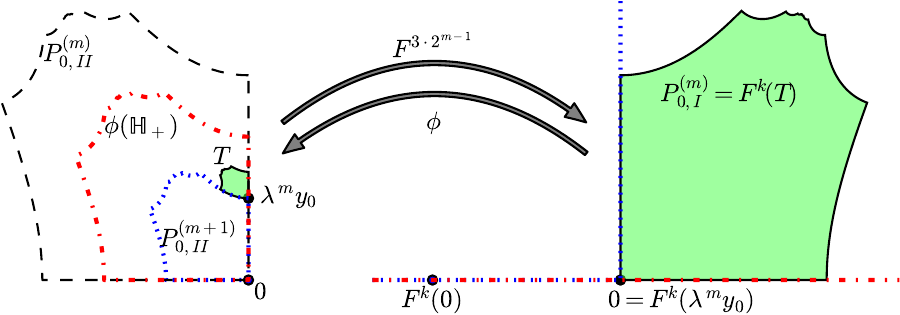}
\caption{The case $k=3\cdot 2^{m-1}$ from the proof of \autoref{PropC2}.}
\end{figure}

\item[Case $\mathbf{k=2^{m-2}j}$ where $\mathbf{j}$ is odd]
\autoref{RemCritOrb} gives $|F^k(0)|=\lambda^{m-2}|F^j(0)|\geq\lambda^{m-2}F(\lambda)$, contradicting
\eqref{EqFk01}.
\end{description}

\noindent
Since all the possibilities for $k$ lead to a contradition, we have
established the proposition.
\end{proof}
\goodbreak

\noindent Informally speaking, \autoref{PropC2} tells us that the
inverse branch of the iterate of $F$ corresponding to a separated copy $T$
admits an analytic continuation to a region with a definite Koebe space
around~$T$.

\subsection{Distortion of measurable sets}\label{SubsecBounds}
The Koebe Distortion Theorem (see \cite{Duren}, e.g.) implies that there
exists a constant $C>0$ such that for any univalent function $\phi$ on
 $\C_\lambda=\C\setminus\left(
  (-\infty,-\tfrac{1}{\lambda}]\cup[\tfrac{F(\lambda)}{\lambda^2},\infty)
 \right)$ one has:
$$\frac{|\phi'(z)|}{|\phi'(w)|}\leq C,\;\;\text{for any}\;\;z,w\in \Piece{0}.$$
Recall that $\Piece{0}$ is used to denote any of the tiles
$\Piece{0,\I}$, $\Piece{0,\II}$, $\Piece{0,\III}$, or $\Piece{0,\IV}$
(see \autoref{PieceDef}).
As a consequence of \autoref{PropC2}, one can show the following.
\begin{cor}\label{PropKoebeBA0}
 Let $A$ and $B$ be two measurable subsets of $\Piece{0}$ of positive
 measure and let $T$ be a primitive or a separated copy of
 $\Piece[m]{0}$ under $F^k$ for some $k\ge0$ and $m\ge2$.
 Then
\[
\frac{\ar(F^{-k}(\Gen[m]{B})\cap T)}
     {\ar(F^{-k}(\Gen[m]{A})\cap T)}
\;\le\; C^4\frac{\ar(B)}{\ar(A)}.
\]
\end{cor}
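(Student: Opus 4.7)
The plan is to combine \autoref{PropC2} with the Koebe distortion bound displayed just before the corollary, modulo a rescaling. The hard work (securing a definite Koebe space around $T$) was already done in \autoref{PropC2}; what remains is a routine change-of-variables argument.

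First, since $T$ is primitive or separated with $m \ge 2$, \autoref{PropC2} gives a univalent extension $\phi$ of the inverse branch $(F^k|_T)^{-1}\from \Piece[m]{0} \to T$ to the domain $s\lambda^m\C_\lambda$, where $s = \sign(\Piece[m]{0}) \in \{-1,+1\}$. I would then rescale by setting $\psi(z) := \phi(s\lambda^m z)$, which is univalent on $\C_\lambda$. The displayed Koebe inequality then yields $|\psi'(z)|/|\psi'(w)| \le C$ for all $z,w\in \Piece{0}$; since $\psi'(z) = s\lambda^m\,\phi'(s\lambda^m z)$, the rescaling is conformal and this transfers directly to
\[
  \frac{|\phi'(u)|}{|\phi'(v)|} \;\le\; C \quad \text{for all } u,v \in \Piece[m]{0}.
\]

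Next, I would convert this derivative bound into an area-distortion bound via the complex-analytic area formula $\ar(\phi(X)) = \int_X |\phi'(z)|^2\,dA(z)$ for univalent holomorphic $\phi$. For any measurable $X \subset \Piece[m]{0}$ this gives
\[
  \inf_{\Piece[m]{0}}|\phi'|^2 \cdot \ar(X) \;\le\; \ar(\phi(X)) \;\le\; \sup_{\Piece[m]{0}}|\phi'|^2 \cdot \ar(X),
\]
so that the multiplicative area distortion is bounded by $(\sup/\inf)^2 \le C^2$. Applying this to $X = \Gen[m]{A}$ and $X = \Gen[m]{B}$, dividing, and using that the rescaling $z \mapsto \lambda^m z$ preserves area ratios (so that $\ar(\Gen[m]{B})/\ar(\Gen[m]{A}) = \ar(B)/\ar(A)$), I obtain the claimed inequality. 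An additional factor of $C^2$ enters if one has to chain the derivative bound across two of the four symmetric tiles comprising $\Piece{0}$ in order to compare $|\phi'|$ at points of $\Gen[m]{A}$ and $\Gen[m]{B}$ lying in different quadrants; this is what produces the exponent $4$ in the final statement.

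There is no real obstacle here: the substantive geometry is encapsulated in \autoref{PropC2}, and the corollary is then a direct Koebe-distortion computation. The only care required is bookkeeping of the sign $s$, the scaling factor $\lambda^m$, and the possibility that $A$ and $B$ sit in distinct symmetric copies of $\Piece{0}$.
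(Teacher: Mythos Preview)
Your approach is correct and is exactly what the paper intends: invoke \autoref{PropC2} to obtain the univalent extension, rescale by $z\mapsto s\lambda^m z$ to land on $\C_\lambda$, and then convert the pointwise derivative bound into an area-distortion estimate via the Jacobian formula. The paper does not spell out a proof of this corollary at all; it simply records it as a consequence of \autoref{PropC2} and then passes to the sharper \autoref{PropKoebeBA}.

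One correction, though: your explanation of how the exponent~$4$ arises is mistaken. By the paper's convention, $\Piece{0}$ denotes a \emph{single} one of the four symmetric tiles, so $A$ and $B$ lie in the same tile, and after the rescaling both $sA$ and $sB$ lie in the single tile $s\,\Piece{0}$. There is no chaining across quadrants. Your own computation therefore already gives
\[
\frac{\ar(F^{-k}(\Gen[m]{B})\cap T)}{\ar(F^{-k}(\Gen[m]{A})\cap T)}
\;\le\; \frac{\sup_{\Piece[m]{0}}|\phi'|^2}{\inf_{\Piece[m]{0}}|\phi'|^2}\cdot\frac{\ar(B)}{\ar(A)}
\;\le\; C^2\,\frac{\ar(B)}{\ar(A)},
\]
which is \emph{stronger} than the stated bound (since $C\ge 1$) and in particular implies it. The $C^4$ in the paper is simply a generous constant---nothing in your argument is missing.
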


\noindent
However, we will need a slightly sharper version of this result, where the
constant depends more explicitly on the set $A$.
We devote the remainder of this section to establishing it.

\medskip
Consider the slit plane $\C_\lambda$.
From the Koebe Distortion Theorem, the function
$$C(z,w)= \sup \Set{ \frac{\lvert\phi'(z)\rvert}{\lvert\phi'(w)\rvert} \st
                     \phi\from\C_\lambda\to \C\;\;\text{is univalent} }$$
is nonzero and finite for all $z$ and $w$.

Fix a univalent map $\phi$ on $\C_\lambda$.
For every $w\in\C_\lambda$, there is a conformal isomorphism
$H_w\from \C_\lambda\to\D$ such that $H_w(w)=0$ and $H'_w(w)$ is a positive real number.
Thus, we can write $\phi$ as a composition
$\phi=\varphi\circ H_w$,
where $\varphi$ is a univalent map on the unit disk $\D$.

\smallskip
Applying the Koebe Distortion Theorem, we have
\[ \frac{1+|H_w(z)|}{(1-|H_w(z)|)^3} \;\ge\;
   \frac{|\varphi'(H_w(z))|}{|\varphi'(0)|} \;\ge\;
   \frac{1-|H_w(z)|}{(1+|H_w(z)|)^3}.
\]
In particular,
\[ \frac{|\phi'(z)|}{|\phi'(w)|} \;\le\; C(z,w) \;\le\;
   \frac{1+|H_w(z)|}{(1-|H_w(z)|)^3}\frac{|H_w'(z)|}{|H_w'(w)|}.
\]
The latter gives us a way to estimate $C(z,w)$ from above.

\smallskip
Fix two measurable subsets $A$ and $B$ of $\Piece{0}$ of positive measure.
For any $z\in B$ we have
\[
\ar(\phi(A))= \int\limits_{A}|\phi'(w)|^2|\dd w|^2
 \ge |\phi'(z)|^2\int\limits_{A} \frac{|\dd w|^2}{(C(z,w))^2} .
\]
For notation, set
$g_A(z)=\int\limits_{A}|\dd w|^2/C^2(z,w)$. Then we obtain
\begin{align*}
\ar(\phi(B))=\int\limits_B|\phi'(z)|^2|\dd z|^2
      \;\le\; \ar(\phi(A))\int\limits_B \frac{|\dd z|^2}{g_A(z)}
      \;\le\; M(A)\ar(\phi(A))\ar(B),
\end{align*}
where
\begin{equation*}\label{EqMA}  
  M(A)=\frac{1}{\inf\Set{g_A(z) \st  z\in \Piece{0}}}.
\end{equation*}
Observe that for two measurable sets $A\subset A'\subset \Piece{0}$, we have
$M(A)\ge M(A')$,  since for all $z$ we have $g_A(z)\le g_{A'}(z)$.

\smallskip
As a consequence of \autoref{PropC2} and the previous discussion, we obtain
the following.

\begin{cor}\label{PropKoebeBA}
 Let $A,B$ be two measurable subsets of $\Piece{0}$ of positive measure and
 let $T$ be a primitive or a separated copy of $\Piece[m]{0}$ under $F^k$
 for some $k\geq 0$ and $m\ge2$.
Then
\begin{equation*}\label{EqKoebe}
\frac{\ar( F^{-k}(\Gen[m]{B})\cap T)}
     {\ar(F^{-k}(\Gen[m]{A})\cap T)}
\;\le\; M(A)\ar(B).
\end{equation*}
 Moreover, if $A_1\subset A_2$ then $M(A_1)\ge M(A_2)$.
\end{cor}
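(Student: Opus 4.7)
The plan is to exhibit a specific univalent map $\psi$ on $\C_\lambda$ to which the general estimate derived in the preceding discussion applies directly. By the four-fold symmetry of the partition $\Puz$, I may assume that $\Piece{0}$ and $\Piece[m]{0}$ lie in the right half-plane, so that $\sign(\Piece[m]{0}) = +1$; the other cases are handled by reflecting $A$ and $B$ symmetrically. Let $\phi\from \Piece[m]{0}\to T$ denote the inverse branch of $F^k$ onto $T$. Proposition \ref{PropC2} extends $\phi$ univalently to $\lambda^m\C_\lambda$. Pre-composing with the similarity $\alpha(z)=\lambda^m z$, which maps $\C_\lambda$ conformally onto $\lambda^m\C_\lambda$ and $\Piece{0}$ onto $\Piece[m]{0}$, produces a univalent map
\[
 \psi=\phi\circ\alpha\from \C_\lambda\longrightarrow\C
\]
satisfying $\psi(A)=F^{-k}(\Gen[m]{A})\cap T$ and $\psi(B)=F^{-k}(\Gen[m]{B})\cap T$.

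Next, I would invoke the inequality established a few lines above the statement: for any univalent $\psi$ on $\C_\lambda$ and any measurable $A,B\subset\Piece{0}$ of positive measure,
\[
 \ar(\psi(B))\;\le\;M(A)\,\ar(\psi(A))\,\ar(B).
\]
This followed by integrating $|\psi'|^2$ over $B$, using the pointwise Koebe estimate $|\psi'(z)|/|\psi'(w)|\le C(z,w)$ to derive $|\psi'(z)|^2\le \ar(\psi(A))/g_A(z)$ for each $z\in B$, and then bounding $\int_B|dz|^2/g_A(z)\le M(A)\ar(B)$. Dividing by $\ar(\psi(A))>0$ and substituting the identifications of $\psi(A)$ and $\psi(B)$ gives exactly the required estimate.

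The monotonicity claim is immediate: if $A_1\subset A_2$, then since $1/C^2(z,w)\ge 0$, enlarging the domain of integration yields $g_{A_1}(z)\le g_{A_2}(z)$ for every $z\in\Piece{0}$; taking infima over $z$ and then reciprocals flips the inequality to give $M(A_1)\ge M(A_2)$. This is exactly the observation recorded just before the statement.

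There is essentially no new obstacle here; the heavy lifting was done in Proposition \ref{PropC2} (whose proof exhausts the combinatorial cases for $k$) and in the Koebe-type derivation that precedes the statement. The only point requiring care is recognizing that the pull-back of the rescaling extends to a univalent map on all of $\C_\lambda$ rather than merely on $\Piece{0}$, so that the Koebe distortion machinery applies with the correct Koebe space — and this is exactly the content of Proposition \ref{PropC2}.
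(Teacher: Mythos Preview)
Your proposal is correct and follows exactly the approach the paper intends: the paper simply states that the corollary is a consequence of \autoref{PropC2} and the preceding discussion, and you have made this explicit by precomposing the univalent extension from \autoref{PropC2} with the scaling $z\mapsto\lambda^m z$ to obtain a univalent map on $\C_\lambda$, then invoking the general inequality $\ar(\psi(B))\le M(A)\,\ar(\psi(A))\,\ar(B)$ and the monotonicity observation already recorded. There is nothing to add.
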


\begin{rem}
While we could write the quantity ~$M(A)\ar(B)$~ more conventionally as
$K_A\frac{\ar(B)}{\ar(A)}$, it is more convenient to use $M(A)$ since
$A_1\subset A_2$ does not imply $K_{A_1} \ge K_{A_2}$.
\end{rem}

\section{Recursive estimates}\label{SecRecEst}

Recall that $F$ is a quadratic-like map
$F\from \W\to
  \mc\setminus((-\infty,-\tfrac{1}{\lambda}]\cup[\tfrac{1}{\lambda^2},\infty)),
$
and for a point $z\in\W$, by the \defn{forward orbit of $z$} we mean the set
$\Set{F^k(z) \st k\in\N\cup\{0\}\;\;\text{such that $F^k(z)$ is defined}}$.
Recall from \autoref{DefDom} that $F_n$ denotes the $n$-th
pre-renormalization of $F$, that is, the restriction of $F^{2^n}$ to $\W[n]$.

\medskip
\begin{Def}\label{DefXnYn}
Define the following (some of which we have referred to in
\autoref{SecPrelim}).

\begin{itemize}[itemsep=.15ex plus 2pt minus 2pt]
\item Let $\tilde{X}_n$ be the set of points in $\W[1]$ that eventually land in
  $\W[n]$:
  \[ \tilde{X}_n =\Set{ z\in\W[1] \st F^k(z)\in\W[n] \;\text{for some $k\ge0$}} .\]

\item Denote by $\tilde\eta_n$ the relative measure of $\tilde{X}_n$ in
  $\W[1]$:   \[ \tilde\eta_{n}=\DS \frac{\ar(\tilde{X}_{n})}{\ar(\W[1])}. \]

\item Let $X_{n,m}$ be the set of points in  $\W[n]$ whose forward orbits
under $F_{n-1}$ intersect $\W[n+m]$:
    \[ X_{n,m} = \lambda^{n-1} \tilde{X}_{m+1}
    =\Set{ z\in\W[n] \st F^k_{n-1}(z) \in \W[n+m]~\text{for some $k\ge0$}}. \]
\goodbreak
\item Let $Y_n$ denote the set of points in $\W[n]$ whose forward orbits
  never return to $\W[n]$:
  \[ Y_n = \Set{ z\in\W[n] \st F^{k}(z) \not\in \W[n] \;\text{for $k \ge 1$}} .\]
\item $\Sigma_n$ is the set of points in $\W[n]$ whose forward orbits under
  $F_{n-1}$ intersect
 $Y_n$:
 \[ \Sigma_{n}=\Set{z\in\W[n] \st F_{n-1}^j(z)\in Y_n
     \;\;\text{for some $j\ge0$}} \]
\item $\Sigma_{n,m}$ is the subset of $\Sigma_n$ with orbits that avoid $\W[n+m]$:
 \[ \Sigma_{n,m}=\Set{z\in\Sigma_n \st
        F_{n-1}^k(z)\notin \W[n+m] \;\text{for all $k\ge 0$} } .\]
%
\item For measurable sets $A$, let $M(A)$ be as in
  \autoref{PropKoebeBA}.  Then define
\[ M_{n,m}=M\bigl( (\lambda^{-n}\Sigma_{n,m}) \cap \Piece{0,\I} \bigr)
  \quad\text{and}\quad
   M_n =  M\bigl( (\lambda^{-n}\Sigma_{n}) \cap \Piece{0,\I} \bigr). \]
\end{itemize}
\end{Def}

%

\goodbreak
\begin{rem}\label{ObsUnm}
The following observations are immediate:
\begin{enumerate}
\item \label{XisPcopies} $\tilde{X}_{n}$ is the union of all
  primitive copies of $\Piece[n]{0}$ that lie inside $\W[1]$,
  together with a countable collection of analytic curves (which form
  parts of the boundaries of these copies).
\item\label{Etam+1} The relative measure of $X_{n,m}$ in
  $\W[n]$ is equal to $\tilde{\eta}_{m+1}$:
  \qquad$\DS \frac{\ar(X_{n,m})}{\ar(\W[n])}=\tilde\eta_{m+1} $.
\smallskip
\item  By construction, $X_{n,m}\cap \Sigma_{n,m}=\emptyset$ for all $n,m$.
\item  $\DS \Sigma_n = \bigcup_{m\in\N} \Sigma_{n,m}$.

\end{enumerate}
\end{rem}


\smallskip
\begin{thm}\label{ThRecEst}
For every $n\geq 2$ and $m\geq 1$, one has
\[
   \tilde\eta_{n+m}\le
   M_{n,m} \ar(\Piece{0,\I}) \,\tilde\eta_n\, \tilde\eta_{m+1}
\]
\end{thm}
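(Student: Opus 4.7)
\emph{Plan.} My plan is to decompose $\tilde X_n$ into disjoint primitive copies of $\Piece[n]{0}$ lying in $\W[1]$ (via Observation~\ref{XisPcopies}), apply the Koebe-type estimate of \autoref{PropKoebeBA} on each such copy with $\Sigma_{n,m}$ serving as the reference set~$A$, and then sum.  The crucial choice is $A=\Sigma_{n,m}$: since $X_{n,m}\cap\Sigma_{n,m}=\emptyset$, the pullbacks of $\Sigma_{n,m}\cap\Piece[n]{0,K}$ under the various primitive branches are pairwise disjoint subsets of $\tilde X_n$, and their total area is automatically controlled by $\tilde\eta_n$.

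\emph{Execution.} Write $\tilde X_n=\bigsqcup_i T_i$ (up to null boundary curves), where each $T_i$ is a primitive copy of $\Piece[n]{0,K_i}$ under $F^{k_i}$ with inverse branch $\phi_i$.  Since $\W[n+m]\subset\W[n]$, primitivity forces every $z\in T_i$ whose $F$-orbit enters $\W[n+m]$ to have $F^{k_i}(z)\in\Piece[n]{0,K_i}$ as its first visit to $\W[n]$, so (using $X_{n,m}=\lambda^{n-1}\tilde X_{m+1}$)
\[
 T_i\cap\tilde X_{n+m}\;=\;\phi_i\bigl(X_{n,m}\cap\Piece[n]{0,K_i}\bigr).
\]
Set $A_i=\lambda^{-n}(\Sigma_{n,m}\cap\Piece[n]{0,K_i})$ and $B_i=\lambda^{-n}(X_{n,m}\cap\Piece[n]{0,K_i})$, both subsets of $\Piece{0,K_i}$.  \autoref{PropKoebeBA} then gives
\[
 \ar(T_i\cap\tilde X_{n+m})\;\le\;M(A_i)\,\ar(B_i)\,\ar\bigl(\phi_i(\Sigma_{n,m}\cap\Piece[n]{0,K_i})\bigr).
\]
The four-fold symmetry of $F$ (even, real-analytic) makes $X_{n,m}$ and $\Sigma_{n,m}$ symmetric across both coordinate axes, which forces $\ar(B_i)=\tilde\eta_{m+1}\,\ar(\Piece{0,\I})$ and $M(A_i)=M_{n,m}$ uniformly in $K_i$.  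Summing over $i$ and using that the sets $\phi_i(\Sigma_{n,m}\cap\Piece[n]{0,K_i})$ are disjoint subsets of $\tilde X_n$ yields
\[
 \sum_i\ar\bigl(\phi_i(\Sigma_{n,m}\cap\Piece[n]{0,K_i})\bigr)\;\le\;\ar(\tilde X_n)\;=\;\tilde\eta_n\,\ar(\W[1]),
\]
and dividing the summed inequality by $\ar(\W[1])$ yields the theorem.

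\emph{Main obstacle.} I expect the most delicate point to be verifying that $M(A_i)=M_{n,m}$ for all four values of $K_i$.  For pieces in the right half-plane this is immediate, but for pieces in the left half-plane \autoref{PropC2} provides analytic continuation onto $-\lambda^n\C_\lambda$ rather than $\lambda^n\C_\lambda$, so one has to combine the $z\mapsto-z$ and $z\mapsto\bar z$ symmetries of $\Sigma_{n,m}$ with the conjugation-symmetry of $\C_\lambda$ to identify the resulting Koebe constants.  A secondary technical point is ensuring that the two definitions of $X_{n,m}$ (via $F$-orbits and via $F_{n-1}$-orbits) coincide, which is exactly the content of the identity $X_{n,m}=\lambda^{n-1}\tilde X_{m+1}$ recorded in \autoref{DefXnYn}.
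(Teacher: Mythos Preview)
Your covering claim
\[
 T_i\cap\tilde X_{n+m}\;=\;\phi_i\bigl(X_{n,m}\cap\Piece[n]{0,K_i}\bigr)
\]
only holds in the direction $\supset$; the inclusion $\subset$ (the one you actually need to bound $\ar(\tilde X_{n+m})$ from above) fails. After rescaling, your claim reduces to the assertion that for $w\in\Piece[n]{0}$, if the $F$-orbit of $w$ (in $\W$) visits $\W[n+m]$ then $w\in X_{n,m}$, i.e.\ the $F_{n-1}$-orbit of $w$ (in $\W[n-1]$) visits $\W[n+m]$. But these dynamical systems are different: the $F_{n-1}$-orbit terminates the moment it leaves $\W[n-1]$, whereas the $F$-orbit can continue inside $\W\setminus\W[n-1]$ and return to $\W[n+m]$ later. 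The identity $X_{n,m}=\lambda^{n-1}\tilde X_{m+1}$ you invoke in your ``secondary technical point'' is just the scaling conjugacy restating the definition of $X_{n,m}$; it does not identify $F$-orbits with $F_{n-1}$-orbits, so it does not close this gap.

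This is exactly the reason the paper introduces \emph{separated} copies and proves \autoref{LmSeparatedness}: every copy $T$ of $\Piece[n+m]{0}$ in $\W[1]$ is shown to sit inside some primitive \emph{or separated} copy $Q\in\SP$ with $F^k(T)\subset X_{n,m}$; the separated copies are precisely what capture orbits that pass through $\W[n]$, exit $\W[n-1]$, and re-enter before finally landing in $X_{n,m}$. The price is that the copies $Q\in\SP$ are no longer pairwise disjoint, so one cannot simply invoke the disjointness of primitive tiles as you do; the paper instead proves (\autoref{LmUSQDisj}, \autoref{CoUSQDisj}) that the pulled-back sets $X_Q$ and $\Sigma_Q$ are nonetheless pairwise disjoint. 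Your argument, while structurally on the right track, misses this entire layer.
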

\medskip


Before proving \autoref{ThRecEst}, let us formulate its main corollary.

\begin{cor}\label{PropRecMeas}
If for some $n\geq 2$ one has $\tilde\eta_n M_n\ar(\Piece{0,\I})<1$,
then the Hausdorff dimension of $J_F$ is less than 2.
\end{cor}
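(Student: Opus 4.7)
The plan is to iterate the recursive bound from \autoref{ThRecEst} to obtain exponential decay of $\tilde\eta_k$ in $k$, then transfer this to exponential decay of $\eta_k$ via \autoref{LmHatEtaN}, and finally invoke the Avila--Lyubich trichotomy (\autoref{ThAL}) to conclude that $\hd(J_F)<2$.

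The first key step is to show that $M_{n,m}\downarrow M_n$ as $m\to\infty$. Since $\W[n+m]$ shrinks as $m$ grows, the avoidance condition defining $\Sigma_{n,m}$ weakens, so the family $\{\Sigma_{n,m}\}_m$ is increasing in $m$ with $\Sigma_n=\bigcup_m\Sigma_{n,m}$ by \autoref{ObsUnm}. Consequently the sets $A_m:=(\lambda^{-n}\Sigma_{n,m})\cap\Piece{0,\I}$ increase to $A_\infty:=(\lambda^{-n}\Sigma_n)\cap\Piece{0,\I}$. Since the identity map is univalent on $\C_\lambda$, we have $C(z,w)\ge 1$, and therefore
\[
  |g_{A_\infty}(z)-g_{A_m}(z)| \;=\; \int_{A_\infty\setminus A_m}\frac{|dw|^2}{C^2(z,w)}
  \;\le\; \ar(A_\infty\setminus A_m) \;\longrightarrow\; 0
\]
uniformly in $z\in\Piece{0,\I}$. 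Taking reciprocals of the infima yields $M_{n,m}\downarrow M_n$. Combined with the strict hypothesis $\tilde\eta_n M_n\,\ar(\Piece{0,\I})<1$, this produces $m^*\in\N$ and $q\in(0,1)$ such that $\tilde\eta_n M_{n,m}\,\ar(\Piece{0,\I})\le q$ for every $m\ge m^*$.

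Feeding this back into \autoref{ThRecEst} yields $\tilde\eta_{n+m}\le q\,\tilde\eta_{m+1}$ for all $m\ge m^*$. Setting $k=n+m$, this reads $\tilde\eta_k\le q\,\tilde\eta_{k-(n-1)}$ whenever $k\ge n+m^*$. If $n=1$ the bound already forces $\tilde\eta_k=0$ for $k\ge 1+m^*$; if $n\ge 2$, iterating the inequality together with the trivial estimate $\tilde\eta_j\le 1$ yields exponential decay of $\tilde\eta_k$ in $k$ (at rate essentially $q^{1/(n-1)}$). By \autoref{LmHatEtaN}, $\eta_k$ then also decays exponentially, so the trichotomy \autoref{ThAL} rules out the Balanced case ($\eta_n\asymp 1/n$) and the Black Hole case ($\inf\eta_n>0$), leaving only the Lean case and hence $\hd(J_F)<2$.

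The only genuinely nontrivial step is the uniform convergence $M_{n,m}\to M_n$, which is what allows the strict inequality in the hypothesis to be upgraded to a usable contraction constant $q<1$ in the recursion; everything else is bookkeeping on the iteration and a direct appeal to the trichotomy already recalled in \autoref{SecPrelim}.
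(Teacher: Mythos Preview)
Your proof is correct and follows essentially the same route as the paper: pick an $m$ for which $\tilde\eta_n M_{n,m}\,\ar(\Piece{0,\I})<1$, then iterate \autoref{ThRecEst} using the monotonicity of $M_{n,\cdot}$ (which follows from $\Sigma_{n,m}\uparrow\Sigma_n$ and \autoref{PropKoebeBA}) to get exponential decay of $\tilde\eta_k$, and conclude via \autoref{LmHatEtaN} and \autoref{ThAL}. The only differences are cosmetic: the paper simply asserts the existence of such an $m$ and iterates along the progression $\{rn+m\}_r$, whereas you supply the uniform-convergence argument $M_{n,m}\to M_n$ explicitly (a detail the paper omits) and iterate in steps of $n-1$ instead of $n$.
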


\begin{proof} Let $n$ be such that $\tilde\eta_nM_n\ar(P_{0,\I})<1$.
Then there is an $m$ for which $\tilde\eta_n M_{n,m}\ar(\Piece{0,\I})<1$; let
$\gamma =  \tilde\eta_n M_{n,m}\ar(\Piece{0,\I})$ for this value of $n$ and $m$.

By construction  $\Sigma_{n,k}\subset \Sigma_{n,\ell}$ whenever $k<\ell$, so
\autoref{PropKoebeBA} tells us that
$$M_{n,rn+m}\le M_{n,m} \quad\text{for every $r\in\N$}.$$
Using \autoref{ThRecEst}
and writing $rn+m$ as $n+((r-1)n+m)$, we deduce that
\[
  \tilde{\eta}_{rn+m+1}\le \tilde{\eta}_{rn+m}\le
   M_{n,(r-1)n+m} \ar(\Piece{0,\I}) \,\tilde\eta_n\, \tilde\eta_{(r-1)n+m+1}
  \le\gamma\,\tilde{\eta}_{(r-1)n+m+1} \quad
  \text{for every $r\in\N$}.
\]
As a result, $\tilde\eta_k$ converges to zero exponentially fast.
Consequently, \autoref{LmHatEtaN} tells us that the parameter
$\eta_k$ of the Avila-Lyubich Trichotomy (\autoref{ThAL}) also
converges to zero exponentially fast.  Thus, $F$ is in the lean case and
the Hausdorff dimension of $J_F$ is less than 2.
\end{proof}

\subsection{The proof of \autoref{ThRecEst}}
First, let us prove some auxiliary lemmas.

\begin{lem}\label{LmSeparatedness}
Let $T$ be a copy of $\Piece[m+n]{0}$ with $T\subset\W[1]\setminus X_{n,m}$.
Then there is a $k\ge0$ and a primitive or separated copy $Q$ of
$\Piece[n]{0}$ under $F^k$ so that
$T \subset Q \subset \W[1]$, with
$F^k(T)\subset X_{n,m}$ but $Q\not\subset \tilde{X}_{n+m}$.

\end{lem}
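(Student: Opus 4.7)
The plan is to set $k$ to be the smallest non-negative integer for which $F^k(T)\subset X_{n,m}$, and to let $Q$ be the connected component of $F^{-k}(\Piece[n]{0,K})$ containing $T$, where $K$ denotes the quadrant of $F^k(T)$. Such a $k$ exists since $F^j(T)=\Piece[m+n]{0}\subset X_{n,m}$ for the integer $j$ with $F^j(T)=\Piece[m+n]{0}$; and $k\ge 1$ by the hypothesis $T\not\subset X_{n,m}$. By \autoref{RemPullBack} the map $F^k\from Q\to\Piece[n]{0,K}$ is a univalent bijection, while $T\subset Q$ and $F^k(T)\subset X_{n,m}$ hold by construction.

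The containment $Q\subset\W[1]$ follows from the refinement structure of tiles: $Q$ is itself a tile (a connected component of $F^{-(k+2^n)}(\C\setminus\R)$), and by the \refNestingProp{} it is contained in a unique level-one tile, which must be the $\Piece[1]{0,K'}\subset\W[1]$ already containing $T$. For primitivity or separatedness of $Q$, I use the minimality of $k$: if some $0\le i<k$ had both $F^i(Q)\subset\W[n]$ and $F^i(Q)\cap\Gen[n-1]{J_F}\neq\emptyset$, then \autoref{LmCopiesInWn} applied with $m=n$ to the copy $F^i(Q)$ of $\Piece[n]{0,K}$ under $F^{k-i}$ inside $\W[n-1]$ gives $2^{n-1}\mid k-i$, so that $F_{n-1}^{(k-i)/2^{n-1}}$ would carry $F^i(T)\subset F^i(Q)$ to $F^k(T)\subset X_{n,m}$, placing $F^i(T)$ in $X_{n,m}$ and contradicting the minimality of $k$. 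Thus for each $0\le i<k$ with $F^i(Q)\subset\W[n]$ we have $F^i(Q)\cap\Gen[n-1]{J_F}=\emptyset$, so either $Q$ is primitive (if no such $i$ exists) or the maximal such $i$ witnesses that $Q$ is separated.

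The most delicate assertion is $Q\not\subset\tilde X_{n+m}$. Since $F^k\from Q\to\Piece[n]{0,K}$ is a biholomorphism and each iterate $F^r(Q)$ for $0\le r<k$ avoids $\W[n+m]$ (immediately in the primitive case; in the separated case the exceptional iterate $F^{i^*}(Q)$ lies in $\W[n]\setminus\Gen[n-1]{J_F}$ and a nesting argument combined with \autoref{LmSeparated} rules out intersection with $\W[n+m]$), it suffices to exhibit one point $p_*\in\Piece[n]{0,K}$ whose forward $F$-orbit avoids $\W[n+m]$; pulling back through $F^k|_Q$ then gives the required point of $Q\setminus\tilde X_{n+m}$. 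The Cvitanovi\'c--Feigenbaum self-similarity~\eqref{EqF2m} supplies such a $p_*$: the dynamics of $F_n$ on $\W[n]$ is conjugate (via $z\mapsto\lambda^n z$, up to sign) to that of $F$ on $\W$, and $F$ has repelling periodic orbits in the annular region $\Piece{0,K}\cap(\W\setminus\overline{\W[1]})$ by density of repelling periodic points in the portion of $J_F$ lying there. Rescaling by $\lambda^n$ produces an $F_n$-periodic orbit inside $\Piece[n]{0,K}\cap(\W[n]\setminus\overline{\W[n+1]})$, so the $F_n$-iterates of $p_*$ all avoid $\W[n+m]\subset\W[n+1]$; the verification that the intermediate $F$-iterates of $p_*$ (at times not divisible by $2^n$) likewise stay outside $\W[n+m]$ is the main remaining technical obstacle, handled by iterated use of \autoref{RemCritOrb}~(iii) at the nested scales $n,n+1,\ldots,n+m-1$ together with the renormalization self-similarity.
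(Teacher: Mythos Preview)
Your construction of $k$ and $Q$, the verification that $Q\subset\W[1]$, and the argument that $Q$ is primitive or separated all match the paper's proof (the paper defines $j$ as the minimal index with $F^j(T)\cap X_{n,m}\ne\emptyset$ and $F^j(T)\subset\W[n]$, then shows $F^j(T)\subset X_{n,m}$; this $j$ coincides with your $k$). Your observation that for \emph{every} $i<k$ with $F^i(Q)\subset\W[n]$ one has $F^i(Q)\cap\Gen[n-1]{J_F}=\emptyset$ is correct and slightly stronger than what the paper states.

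The gap is in your proof that $Q\not\subset\tilde X_{n+m}$. Your reduction is sound: once you know $F^r(Q)\cap\W[n+m]=\emptyset$ for all $0\le r<k$, it suffices to exhibit $p_*\in\Piece[n]{0,K}$ whose full $F$-orbit avoids $\W[n+m]$. But your construction of $p_*$ as an $F_n$-periodic point in $\W[n]\setminus\overline{\W[n+1]}$ does not work, and \autoref{RemCritOrb}(iii) does not rescue it. Rescale to level~$0$: by \autoref{PropTilingStr}(i) one has $F(\W[2])\subset\Piece[1]{1}$, hence $F^{-1}(\W[m+1])\cap\W[1]\subset\W[1]\setminus\W[2]$, and this preimage is nonempty. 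So there exist $u\in\W[1]\setminus\W[2]$ with $F(u)\in\W[m+1]$; translated back, a point $w\in\W[n]\setminus\W[n+1]$ can have $F_{n-1}(w)\in\W[n+m]$. Thus an arbitrary $F_n$-periodic orbit in $\W[n]\setminus\overline{\W[n+1]}$ may well have intermediate $F$-iterates inside $\W[n+m]$. The ``iterated use of \autoref{RemCritOrb}(iii)'' you invoke only prevents $\W[n']$ from returning to \emph{itself} too soon; it says nothing about visits from $\W[n]\setminus\W[n+1]$ into a deeper $\W[n']$.

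The paper bypasses this entirely with a direct combinatorial argument. Assuming $Q\subset\tilde X_{n+m}$, it uses the \refNestingProp\ and \autoref{LmCopiesInWn} to produce some $d<k$ with either $F^d(Q)\subset\Piece[n+m]{0}$ or $F^d(Q)\supset\Piece[n+m]{0}$. The first forces $F^d(T)\subset\W[n+m]\subset X_{n,m}$, contradicting minimality of $k$; the second is impossible because $F^d(Q)$ is a (primitive or) separated copy of $\Piece[n]{0}$ and hence, by \autoref{LmSeparated}, disjoint from $\Gen[n-1]{J_F}$, whereas $\Piece[n+m]{0}$ meets $\Gen[n-1]{J_F}$. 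No auxiliary point $p_*$ is needed. If you wish to repair your point-based route, a workable choice is any $p_*\in(\Piece[n]{0,K}\cap Y_n)\setminus\W[n+m]$: by definition of $Y_n$ the forward $F$-orbit of such a point never re-enters $\W[n]\supset\W[n+m]$, though you would still need to check this set is nonempty.
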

\begin{proof} Let $F^r(T)=\Piece[m+n]{0}$.
Let $j$ be the minimal number for which $F^j(T)$ intersects $X_{n,m}$
with $F^j(T)\subset\W[n]$. We now show that we must have
$F^j(T)\subset X_{n,m}$.
Since $X_{n,m}$ consists of copies of $\W[n+m]$, the \refNestingProp\
tells us that either $F^{j}(T)\subset X_{n,m}$ or
it contains a copy of $\Piece[m+n]{0}$ under $F_{n-1}^s$ for some $s$;
in this case $F^j(T)\cap \Gen[n-1]{J_F}\neq\emptyset$.
By \autoref{LmCopiesInWn}, $r-j$ is divisible by $2^{n-1}$. Moreover,
$F_{n-1}^p(F^j(T))=\Piece[m+n]{0}$ for $p=(r-j)/2^{n-1}$ and so we must have
$F^j(T)\subset X_{n,m}$.

\smallskip
Let $Q$ be the unique copy of $\Piece[n]{0}$ under $F^j$ containing $T$. Let
us show that $Q$ is either primitive or separated and
$Q\setminus\tilde{X}_{n+m}\neq\emptyset$.
Observe that if $Q\subset \tilde{X}_{n+m}$, the \refNestingProp\ and
\autoref{LmCopiesInWn} would imply that either $F^d(Q)\subset\Piece[n+m]{0}$
or $F^d(Q)\supset\Piece[n+m]{0}$  for some $d<j$. If $Q$ is primitive this is
clearly impossible.

Assume that $Q$ is not primitive. Then there exists $0\le\ell<j$
such that $F^\ell(Q)\subset\W[n]$. Assume that $\ell$ is the maximal such
number. If $F^\ell(Q)$ intersects $\Gen[n-1]{J_F}$ then from
\autoref{LmCopiesInWn} we obtain that $c=(j-\ell)/2^{n-1}$ is an integer and
$F_{n-1}^c(F^\ell(Q))=\Piece[n]{0}$. In this case
$F_{n-1}^c(F^\ell(T))=F^j(T)\subset X_{n,m}$ and so $F^\ell(T)\subset X_{n,m}$. This
contradicts the definitions of $j$ and $\ell$. Thus,
$F^\ell(Q)\cap\Gen[n-1]{J_F}=\emptyset$ and so $Q$ is separated.

Now, $F^d(Q)\supset\Piece[n+m]{0}$ is impossible for $d<j$ by
\autoref{LmSeparated} and the fact that $F^d(Q)$ is either a primitive or a separated copy of $P_0^{(n)}$ (see \autoref{RemSepCop}\ref{IterofSep}).
On the other hand, if  $F^d(Q)\subset\Piece[n+m]{0}$ for $d<j$ then
$F^d(T)\subset\W[n+m]\subset X_{n,m}$,  contradicting the definition of
$j$. This shows that $Q\setminus\tilde{X}_{n+m}$ is nonempty, finishing the
proof.
\end{proof}

\begin{lem}\label{LmUSQDisj}
For every primitive or separated copy $Q$ of
  $\Piece[n]{0}$  for which $Q\setminus \tilde{X}_{n+m}\neq \emptyset$, one has
\[ 
   Q\cap (\Sigma_{n,m}\cup X_{n,m})=\emptyset.
\]
\end{lem}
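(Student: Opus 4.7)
My plan is to verify the two disjointness statements $Q \cap X_{n,m} = \emptyset$ and $Q \cap \Sigma_{n,m} = \emptyset$ in parallel via a tile-nesting argument driven by the primitive/separated structure of $Q$.

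First, since $X_{n,m},\Sigma_{n,m}\subseteq\W[n]$, the case $Q\cap\W[n]=\emptyset$ gives the conclusion for free. Otherwise, applying the \refNestingProp\ to $Q$ and the four tiles $\Piece[n]{0,K}$ forces either $Q\subseteq\W[n]$ or $Q\supsetneq\Piece[n]{0,K}$ for some $K$. If $Q$ is primitive under $F^k$ with $k>0$, primitivity at $j=0$ already gives $Q\cap\W[n]=\emptyset$, ruling out both branches. If $Q$ is separated, the analogous argument using the maximal $j^*$ with $F^{j^*}(Q)\subseteq\W[n]$, together with the fact that $\closure{\Piece[n]{0,K}}$ contains points of $\Gen[n-1]{J_F}$, rules out $\Piece[n]{0,K}\subseteq Q$ by contradicting $F^{j^*}(Q)\cap\Gen[n-1]{J_F}=\emptyset$. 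Thus I may assume $Q\subseteq\W[n]$.

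To show $Q\cap X_{n,m}=\emptyset$, I argue by contradiction. Let $p\in Q\cap X_{n,m}$ and take the smallest $s\ge 0$ with $F^{2^{n-1}s}(p)\in\W[n+m]$, landing in some $\Piece[n+m]{0,K'}$. Let $T_p$ be the connected component of $F^{-2^{n-1}s}(\Piece[n+m]{0,K'})$ containing~$p$; this is a tile. The \refNestingProp\ forces $T_p\subseteq Q$ or $Q\subseteq T_p$. In the latter case $F^{2^{n-1}s}(Q)\subseteq\W[n+m]$, so $Q\subseteq\tilde{X}_{n+m}$, contradicting the hypothesis. To rule out $T_p\subsetneq Q$, I use the minimality of $s$ and \autoref{RemCritOrb}\,(iii)---which says returns of a $\W[n]$-orbit to $\W[n]$ occur only at multiples of $2^{n-1}$---to detect a forbidden early visit of $F^r(Q)$ to $\W[n]$, violating either the primitivity of $Q$ or the maximality defining $j^*$. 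The argument for $Q\cap\Sigma_{n,m}=\emptyset$ is parallel, with $\W[n+m]$ replaced by the first exit tile into $Y_n$: a point $p\in Q\cap\Sigma_{n,m}$ has an $F$-orbit that escapes $\W[n]$ permanently while avoiding $\W[n+m]$, and the analogous nesting/timing dichotomy closes the case.

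The main obstacle will be making the timing contradiction in the case $T_p\subsetneq Q$ truly rigorous. Several indices must be lined up carefully---the iterate $k$ defining $Q$ as a copy, the iterate $2^{n-1}s$ defining $T_p$, the separated-copy threshold $j^*$ where applicable, and the return-time lattice from \autoref{RemCritOrb}\,(iii)---and I expect the cascading case analysis to mirror the one used in \autoref{PropC2} and \autoref{LmSeparatedness}, where the Cvitanovi\'c--Feigenbaum relation \eqref{EqF2m} translates combinatorial constraints into divisibility restrictions by powers of~$2$.
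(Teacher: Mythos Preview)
Your nesting dichotomy is the right starting point, and the case $Q\subseteq T_p$ is handled correctly. But the heart of the argument---ruling out $T_p\subsetneq Q$---has a real gap, and the tool you are reaching for is not the one that works.

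You propose to obtain a contradiction by ``detecting a forbidden early visit of $F^r(Q)$ to $\W[n]$'' from the minimality of $s$ and \autoref{RemCritOrb}\,(iii). But $T_p\subsetneq Q$ only gives $F^{2^{n-1}s}(T_p)=\Piece[n+m]{0}\subset\W[n]$; it does \emph{not} force $F^{2^{n-1}s}(Q)\subset\W[n]$, since $Q$ is strictly larger. So there is no visit of the whole tile $Q$ to $\W[n]$ to violate primitivity or the maximality of $j^*$, and divisibility considerations alone will not manufacture one. What actually closes this case is \autoref{LmSeparated}: a copy of $\Piece[n+m]{0}$ under $F_{n-1}^s$ necessarily meets $\Gen[n-1]{J_F}$, so $T_p\subset Q$ forces $Q\cap\Gen[n-1]{J_F}\neq\emptyset$, which is forbidden for separated $Q$. (For primitive $Q$ under $F^k$ with $k>0$ one has $Q\cap\W[n]=\emptyset$ already at $j=0$, so neither $X_{n,m}$ nor $\Sigma_{n,m}$ can meet $Q$.) You cite \autoref{PropC2} and \autoref{LmSeparatedness} as models for a case analysis, but note that \autoref{PropC2} itself leans on \autoref{LmSeparated}; re-deriving it inside this proof is redundant.

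The $\Sigma_{n,m}$ case is also not parallel to the $X_{n,m}$ case in the way you suggest. There is no tile ``$\Piece[n+m]{0}$-analogue'' to nest against, because $Y_n$ is not a tile. The paper instead exploits the never-return property of $Y_n$: if $\ell$ is minimal with $Q\cap F_{n-1}^{-\ell}(Y_n)\neq\emptyset$, then $F^s(Q)\setminus\W[n]\neq\emptyset$ for all $s>2^{n-1}\ell$, which forces $k\le 2^{n-1}\ell$. Consequently $F^{2^{n-1}\ell}(Q)$ is a forward image of $\Piece[n]{0}$ that meets $\W[n]$ but is not contained in it, so by nesting it \emph{contains} some $\Piece[n]{0,L}$; pulling back yields a copy of $\Piece[n]{0,L}$ inside $Q$, and again \autoref{LmSeparated} gives the contradiction. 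Your sketch does not capture this mechanism.
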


\begin{proof}
Assume that $Q\cap X_{n,m}\neq\emptyset$.
Then by the \refNestingProp, $Q$ either contains or is contained in
a copy $T$ of $\Piece[n+m]{0}$ under $F_{n-1}^{\ell}$ for some $\ell$.
If $T\subset Q$ then $Q$ intersects $J_F^{n-1}$, which is impossible since $Q$
is separated.
If, on the other hand, $Q \subset T$, then by
the definition of $\tilde{X}_{n+m}$  we have $Q\subset\tilde{X}_{n+m}$,
contradicting a hypothesis of the lemma.
Thus, we must have $Q\cap X_{n,m}=\emptyset$.
\smallskip

Now suppose that $Q\cap\Sigma_{n,m}$ is nonempty.
Let $k$ be such that $F^k(Q)= \Piece[n]{0}$, and let $\ell$ be the minimal number such that
$Q\cap F^{-\ell}_{n-1}(Y_n) \ne \emptyset$. Then
\[ F^s(Q)\setminus \W[n]\neq \emptyset\;\;\text{for all $s>2^{n-1}\ell$}, \]
and therefore $k\le 2^{n-1}\ell$.
By the \refNestingProp, 
\[ F^{2^{n-1}\ell}(Q) \;\supset\; \Piece[n]{0,L} \;\supset\;
   F_{n-1}^\ell(Q\cap F^{-\ell}_{n-1}(Y_n)) \]
for some quadrant $L$.
Since $Q\cap F_{n-1}^{-\ell}(\Piece[n]{0,L})$ is nonempty,
$Q$ must contain a copy $T$ of $\Piece[n]{0,L}$  with
$F_{n-1}^\ell(T) = \Piece[n]{0,L}$. Therefore $Q$ intersects $\Gen[n-1]{J_F}$,
contradicting \autoref{LmSeparated} and finishing the proof.
\end{proof}

\medskip
Let {$\SP$} be the set of all primitive or separated copies $Q$ of
$\Piece[n]{0}$ which lie in $\W[1]$ and are such that
$Q\setminus\tilde{X}_{n+m}\ne\emptyset$.
 For $Q\in\SP$ let $k$ be such that $F^k(Q)=\Piece[n]{0}$. Set
 $$\Sigma_Q=F^{-k}(\Sigma_{n,m})\cap Q,\quad X_Q=F^{-k}(X_{n,m})\cap Q.$$
Recall that $\Sigma_{n,m}$ and $X_{n,m}$ are symmetric with respect to the
axes. Observe that $F^k$ sends 
$X_Q$ and $\Sigma_Q$ bijectively onto the intersections of $X_{n,m}$ and
$Z_{n,m}$, respectively, with one of the four quadrants.


\begin{cor}\label{CoUSQDisj}
The sets from the collection $\Set{\Sigma_Q, X_Q \st Q\in\SP}$ are pairwise
disjoint.
\end{cor}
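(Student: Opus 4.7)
My plan is to reduce the disjointness claim to \autoref{LmUSQDisj} via the \refNestingProp. By \autoref{NestingProperty}, any two distinct elements of $\SP$ are either disjoint or one strictly contains the other. If $Q_1, Q_2 \in \SP$ are disjoint, then since $\Sigma_Q, X_Q \subset Q$ for every $Q \in \SP$, all four of the relevant pairwise intersections vanish trivially. In the case $Q_1 = Q_2 =: Q$, it remains to check $\Sigma_Q \cap X_Q = \emptyset$, which is immediate from $\Sigma_{n,m}\cap X_{n,m}=\emptyset$ (\autoref{ObsUnm}(iii)) together with the bijectivity of $F^k$ on $Q$ (\autoref{RemSepCop}(i)).

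The substantive case is $Q_2 \subsetneq Q_1$, with $F^{k_i}(Q_i) = \Piece[n]{0}$. Since $\Sigma_{Q_2} \cup X_{Q_2} \subset Q_2$ and the single-$Q$ case has been handled, it suffices to prove
\[
 Q_2 \cap (\Sigma_{Q_1} \cup X_{Q_1}) = \emptyset.
\]
Using bijectivity of $F^{k_1}$ on $Q_1$, this amounts to showing that the image $R := F^{k_1}(Q_2) \subsetneq \Piece[n]{0}$ is disjoint from $\Sigma_{n,m} \cup X_{n,m}$. My strategy is to verify that $R$ itself belongs to $\SP$, as a copy of $\Piece[n]{0}$ under $F^{k_2 - k_1}$, so that \autoref{LmUSQDisj} applied to $R$ delivers the desired disjointness directly.

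Verifying $R \in \SP$ requires three checks. First, $R \subset \Piece[n]{0} \subset \W[1]$ is immediate. Second, for $z \in Q_2 \setminus \tilde{X}_{n+m}$, which exists since $Q_2 \in \SP$, the image $F^{k_1}(z) \in R$ has forward $F$-orbit that avoids $\W[n+m]$, so $R \setminus \tilde{X}_{n+m} \ne \emptyset$. Third, $R$ must be a primitive or separated copy of $\Piece[n]{0}$. Here I would first establish the preliminary ordering $k_1 < k_2$, which follows from bijectivity of $F^{k_1}$ on $Q_1$ together with the tile structure preventing a strictly smaller nested copy from having a lower iteration depth; then $Q_2$ cannot be primitive, since $F^{k_1}(Q_2) \subset \W[n]$ with $k_1 < k_2$, so $Q_2$ is separated. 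Letting $j'$ be the maximal element of $[0, k_2)$ with $F^{j'}(Q_2) \subset \W[n]$, the separatedness of $Q_2$ gives $F^{j'}(Q_2) \cap \Gen[n-1]{J_F} = \emptyset$; and since $F^{k_1}(Q_2) = R \subset \W[n]$, we have $j' \ge k_1$. I then claim that $j^* := j' - k_1$ is maximal in $[0, k_2 - k_1)$ with $F^{j^*}(R) \subset \W[n]$, and that $F^{j^*}(R) = F^{j'}(Q_2)$ is disjoint from $\Gen[n-1]{J_F}$, so $R$ is separated. The main technical obstacle is the careful bookkeeping involved in this translation of the separatedness witness across the shift $j \mapsto j - k_1$, together with the preliminary ordering of iteration depths; once these are in hand the conclusion follows by a direct application of \autoref{LmUSQDisj} to $R$.
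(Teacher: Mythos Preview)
Your proof is correct and follows essentially the same route as the paper: reduce via the \refNestingProp\ to the nested case $Q_2 \subset Q_1$, show $R = F^{k_1}(Q_2) \in \SP$, and apply \autoref{LmUSQDisj} to~$R$. The paper organizes the cases by comparing $k_1$ and $k_2$ (thereby sidestepping your derivation of $k_1 < k_2$ from $Q_2 \subsetneq Q_1$) and asserts $R \in \SP$ more tersely --- implicitly via \autoref{RemSepCop}(iii) --- rather than translating the separatedness witness by hand, but the key step is identical.
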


\begin{proof}
Since $\Sigma_{n,m}\cap X_{n,m}=\emptyset$, we obtain immediately that
$\Sigma_Q\cap X_Q=\emptyset$ for every $Q\in\SP$. Let $Q_1, Q_2\in\SP$ be
distinct copies of $T_1=\Piece[n]{0,K}$ and $T_2=\Piece[n]{0,L}$,
respectively, and let $k_1,k_2$ be such that $F^{k_1}(Q_1)=T_1$ and
$F^{k_2}(Q_2)=T_2$.

If $k_1=k_2$ then since $Q_1$ and $Q_2$ are distinct, they must be disjoint.
Assume that $k_1<k_2$ and that one of two sets $\Sigma_{Q_1}$,
$X_{Q_1}$ intersects one of two sets $\Sigma_{Q_2}$, $X_{Q_2}$.
By the nesting property (\autoref{NestingProperty}), $Q_2\subset Q_1$.
Since $F^{k_1}(Q_2)\in \SP$, we can apply \autoref{LmUSQDisj} to see that
$$ F^{k_1}(Q_2)\cap(\Sigma_{n,m}\cup X_{n,m})=\emptyset.$$
Therefore, $Q_2\cap (\Sigma_{Q_1}\cup X_{Q_1})=\emptyset$. This
contradiction finishes the proof.
\end{proof}
\goodbreak

\begin{proof}[\textbf{\textit{Proof of \autoref{ThRecEst}}}]
By \autoref{LmSeparatedness}, $\tilde{X}_{n+m}$ is the union of all
sets of the form $X_Q$, $Q\in\SP$, together with a countable set of
analytic curves. Using the fact that $X_{n,m}$ and $\Sigma_{n,m}$ are
symmetric with respect to the axes, \autoref{PropKoebeBA},
the definition of $M_{n,m}$~(\autoref{DefXnYn}), and
\autoref{ObsUnm}\ref{Etam+1}, we obtain
\[
  \frac{\ar(X_Q)}{\ar(\Sigma_Q)}  \le
  M_{n,m} \ar((\lambda^{-n}X_{n,m})\cap \Piece{0,\I}) =
  M_{n,m} \tilde\eta_{m+1} \ar(\Piece{0,\I}).
\]
We have
\[  \ar(\tilde{X}_{n+m})=\sum_{Q\in\SP} \ar(X_Q) \;\le\;
    M_{n,m}\tilde\eta_{m+1}\ar(\Piece{0,\I})\!\!\sum_{Q\in\SP}\ar(\Sigma_Q).
\]
Since
$\DS \big(\!\bigsqcup_{Q\in\SP} X_Q\big) \sqcup
   \big(\!\bigsqcup_{Q\in\SP} \Sigma_Q\big) \subset \tilde{X}_n$,
~{we obtain}~
$\DS \ar(\tilde{X}_{n+m}) \le
   M_{n,m}\tilde\eta_{m+1}\ar(\Piece{0,\I})\ar(\tilde{X}_n)$.
\end{proof}

\subsection{Estimating $\tilde{X}_n$ and $\Sigma_n$}
\newcommand{\PuzJn}{\Puz[n]_{\!\!J}}
Let $\PuzJn$ denote the elements of $\Puz[n]$ that intersect $J_F$.  That
is, $\PuzJn = \{P\in\Puz[n] \st P\cap J_F \ne \emptyset \}$.
Then, for each $n\in\N$, let $V_n$ be the interior of the closure of the
union of all tiles $P\in\PuzJn$.  Observe that the sets $V_n$ form a collection of nested neighborhoods of $J_F$.
\autoref{FigTilesWithJ} shows the tiles that make up $V_1$, $V_2$, and
$V_3$ in the first quadrant; $V_2$ is also shown in
\autoref{FigEscapingDisks}.

\begin{rem}\label{V2}
In what follows, $V_2$ shall be particularly useful. Observe that
$V_2=F^{-3}(\W).$

\noindent
Also, let
$\Vslit=(-\infty,-\tfrac{1}{\lambda}]\cup V_2 \cup [\tfrac{1}{\lambda^2},\infty)$.
\end{rem}

\begin{Def} \label{DefWtilde}
For $n\ge 3$, let $\Wtilde[n]$ denote the
interior of the closure of
the union of the copies $P$ of~$\mathbb{H}_\pm$ under $F^{2^n-6}$ with $0\in
\closure P$. Notice that for each $n\ge 3$ there are exactly four such
copies; denote by $\Ptilde[n,K]$ the copy in quadrant $K$ (if the quadrant
is omitted, we mean the appropriate copy).
\end{Def}

\begin{figure}[htbp]
 \centerline{
   \includegraphics[height=.23\hsize]{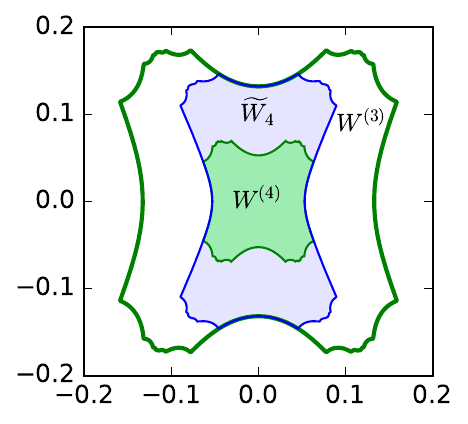}\hfil
   \includegraphics[height=.23\hsize]{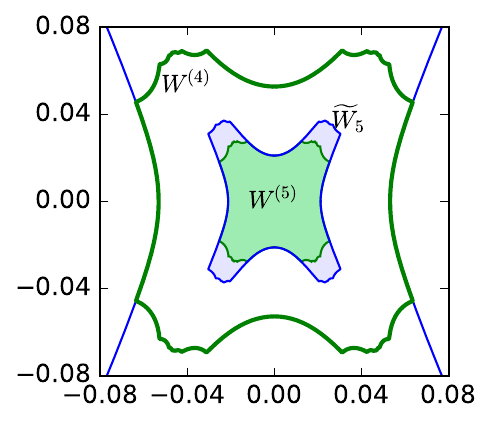}\hfil
   \includegraphics[height=.23\hsize]{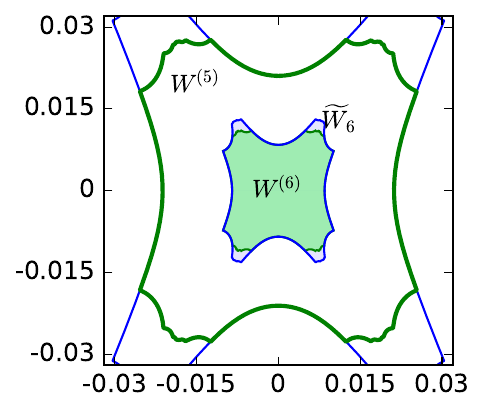}\hfil
   \includegraphics[height=.23\hsize]{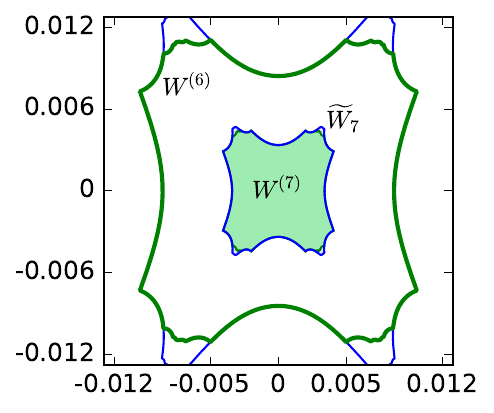}}
 \caption{\label{PicWtilde}
  The sets $\Wtilde[4]$, $\Wtilde[5]$, $\Wtilde[6]$, and $\Wtilde[7]$, shown
  along with  $\W[3]$, $\W[4]$, $\W[5]$, $\W[6]$ and $\W[7]$.}
\end{figure}

\goodbreak
\begin{rem}\label{WtildeProps}
The following observations are immediate from the definitions.
See \autoref{PicWtilde}.

\begin{enumerate}[itemsep=.15ex plus 2pt minus 2pt]
\item \label{WtildeBoundary} for all $n\ge3$, \quad
 $F^{2^n-6}(\partial\Wtilde[n]) = \R$\,,
\item \label{WtildeScale} for all $n\ge3$, \quad
 $\Wtilde[n+1] \;\subset\; \lambda\Wtilde[n]$\,,
\item \label{WtildeNest} for all $n\ge4$,\quad\,
 $ \W[n] \;\subset\; \Wtilde[n] \;\subset\; \W[n-1]$\,,
\item $\Wtilde[3]=\W[1]$\,,
\item \label{PtildeInduction}
 For $n\ge k\ge 3$,\; $F^{2^n-2^k}(\Ptilde[n])=\Ptilde[k]$\,.
\end{enumerate}
\end{rem}
%


\goodbreak
For a point $z_0 \not\in J_F$, the next lemma gives us explicit
criteria for determining a disk of points around $z_0$ whose orbits
behave comparably.
 See \autoref{FigEscapingDisks}.

\begin{lem}\label{LmEscape}
Let $D$ be a disk in the complement of $\Vslit$
and let $D_0$ be a connected component of $F^{-k}(D)$ for any $k\ge 0$.
Then for $n\ge3$, either $D_0 \cap \W[n]=\emptyset$ or $D_0 \subset \Wtilde[n]$.
\end{lem}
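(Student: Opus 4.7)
The plan is to argue by induction on $n \ge 3$. Since $\W[n] \subset \Wtilde[n]$ by \autoref{WtildeProps}\ref{WtildeNest} (and item~(iv) when $n=3$), the connectedness of $D_0$ reduces the dichotomy to showing $D_0 \cap \partial\Wtilde[n] = \emptyset$ whenever $D_0 \cap \W[n] \ne \emptyset$. We argue by contradiction: supposing there is a point $w \in D_0 \cap \partial\Wtilde[n]$, the aim is to derive $F^k(w) \in \Vslit$, contradicting $F^k(w) \in D \cap \Vslit = \emptyset$.

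For the base case $n = 3$, we have $\Wtilde[3] = \W[1]$, and by \autoref{WtildeProps}\ref{WtildeBoundary}, $F^2(w) \in \R$. Since $F(\R) \subset \R$, $F^j(w) \in \R$ for all $j \ge 2$. Combining the tile dynamics of \autoref{PropTilingStr} with the explicit values from \autoref{PropX0} and \autoref{LmImCrit}, we track the real orbit $\{F^j(w)\}_{j \ge 2}$: the hypothesis $D_0 \cap \W[3] \ne \emptyset$ restricts the admissible position of $w$ on the arcs making up $\partial\W[1]$ tightly enough that the orbit must enter $\W \cap \R$ at some step $j \le k-3$, giving $F^k(w) \in V_2 = F^{-3}(\W) \subset \Vslit$ by \autoref{V2}.

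For the inductive step $n \to n+1$, apply the inductive hypothesis at level $n$ (using $\W[n+1] \subset \W[n]$) to obtain $D_0 \subset \Wtilde[n]$; connectedness of $D_0$ places it inside a single tile $\Ptilde[n,K]$ (the case $0 \in D_0$ is handled separately using that $F^k(0)$ lies in the real critical orbit, which is in $\closure{V_2} \subset \closure{\Vslit}$). Using \autoref{WtildeProps}\ref{PtildeInduction} together with the Cvitanovi\'c--Feigenbaum relation \eqref{EqF2m}, the iterate $F^{2^n-6}$ bijects $\Ptilde[n,K]$ with a half-plane and identifies the configuration $(D_0, \W[n+1] \cap \Ptilde[n,K])$ with a configuration for which the analysis has already been carried out at a smaller level, reducing the level-$(n+1)$ claim to earlier cases in the induction.

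The main obstacle is the detailed combinatorial analysis in the base case: one must verify that $D_0 \cap \W[3] \ne \emptyset$ is a strong enough hypothesis to force the forward real orbit of $w \in \partial \W[1]$ through $\W \cap \R$ before time $k$. A naive invocation of $F^j(w) \in \R$ for $j \ge 2$ is insufficient, since real iterates can leave $V_2$; the technical heart of the argument is to exploit the fine tile structure around the critical orbit together with the restriction imposed by $D_0$ reaching all the way into $\W[3]$.
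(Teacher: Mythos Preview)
Your approach has a genuine gap in the base case, and the ``main obstacle'' you flag in the last paragraph is in fact the whole content of the lemma rather than a residual technicality.

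The pointwise strategy cannot work. You fix a single $w\in D_0\cap\partial\W[1]$ and observe $F^j(w)\in\R$ for $j\ge2$; you then hope to force $F^k(w)\in\Vslit$. But $D$ may meet the real axis: $\Vslit\cap\R$ is a proper subset of $\R$ (the set $V_2$ is bounded while $1/\lambda^2>6$, so there is a real gap between $V_2$ and the slit $[1/\lambda^2,\infty)$), hence $F^k(w)\in\R$ is no contradiction. Your remedy is that ``$D_0\cap\W[3]\ne\emptyset$ restricts the admissible position of $w$ on $\partial\W[1]$,'' but this is not available: $D_0$ is only a connected preimage component and can be large, so $D_0\cap\partial\W[1]$ may be a long arc, and a single $w$ on it carries no information about the point of $D_0$ lying in $\W[3]$. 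Separately, the implication ``$F^j(w)\in\W\cap\R$ for some $j\le k-3$ $\Rightarrow$ $F^k(w)\in V_2=F^{-3}(\W)$'' does not hold: membership in $V_2$ is a condition on $F^{k+3}(w)$, not on earlier iterates, and in fact $\W\cap\R\not\subset V_2$.

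The paper's argument is neither pointwise nor inductive in $n$. It treats all $n\ge3$ at once by tracking the connected sets $D_j=F^j(D_0)$ and exploiting that $D_k=D$ is a round disk, so $D\cap\R$ is a single interval. From $D_0\cap\partial\Wtilde[n]\ne\emptyset$ one gets $D_k\cap\R\ne\emptyset$; from $D_0\cap\W[n]\ne\emptyset$ together with $D_j\cap J_F=\emptyset$ one locates the last time $s\le k-4$ at which $D_s$ meets $\i\R$ but not $\R$, and a short analysis of how $D_s$ crosses the boundary of the real-axis tiles of $\Puz[2]$ forces $D_k\cap\R$ to have at least two components, contradicting the convexity of $D$. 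Your scheme discards exactly the structure --- connectedness of the $D_j$ and convexity of $D$ --- that makes this work.

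The inductive step is also incomplete as written: mapping $\Ptilde[n,K]$ to a half-plane by $F^{2^n-6}$ does not produce a configuration matching the inductive hypothesis (you would need the image of $D_0$ to again be a preimage component of a disk disjoint from $\Vslit$, meeting some $\W[m]$ and $\partial\Wtilde[m]$), so even granting the base case the reduction would require substantial further argument.
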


\begin{figure}[htb]
\centerline{\includegraphics[height=.35\textwidth]{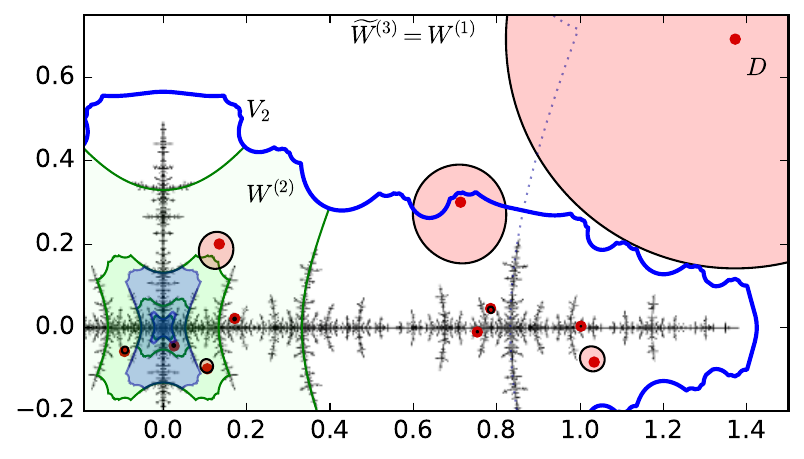} \quad
            \includegraphics[height=.35\textwidth]{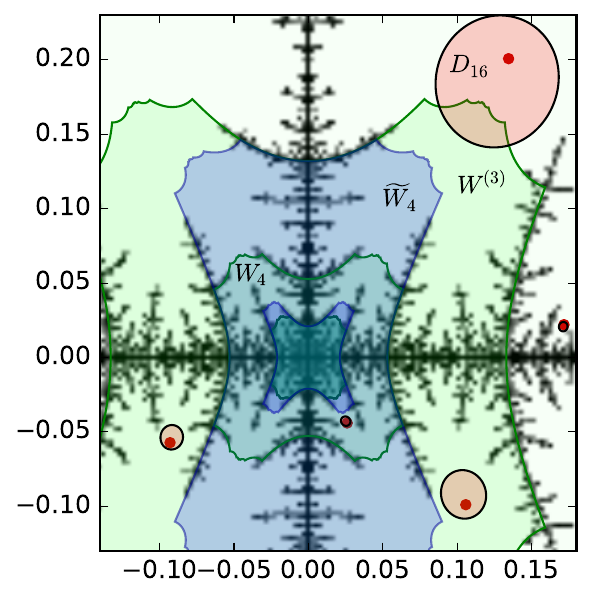}}
\caption{\label{FigEscapingDisks}
  Illustration of \autoref{LmEscape}: on the left is shown $V_2$ (blue
  curve) and a large disk $D$ which lies outside of $\Vslit$.   Also shown are
  $\W[2]$, $\W[3]$, $\W[4]$ and $\W[5]$ (in green), as well as $\Wtilde[4]$ and
  $\Wtilde[5]$ (shaded blue) and $J_F$.
  Several preimages of $D$ are drawn in light red, with the
  corresponding preimage of the center of $D$ indicated by a red dot.
  On the right is a blow-up of the left figure.  While the preimage labeled
  $D_{16}$ partially intersects $\W[3]$, it lies completely inside
  $\Wtilde[3]=\W[1]$ (dotted line).
}
\end{figure}

\goodbreak
\begin{proof}
If we assume the lemma does not hold, $D_0$ must contain points from
$\partial\W[n]$ and $\partial\Wtilde[n]$.

First, since $V_2$ is a neighborhood of $J_F$, the definition of $D$
means it can contain no points of $J_F$.
Using $D_j$ to denote $F^j(D_0)$, we have $D_j\cap J_F=\emptyset$ for
$0\le j\le k$.

Observe also that $\W[n]\subset F^{-(2^n-4)}(V_2)$, and hence
$k\ge 2^n-3$. Applying \autoref{WtildeProps}\ref{WtildeBoundary} gives
\begin{equation}\label{EqFkWtildeReal}
 F^k(\partial\Wtilde[n])=F^{k-2^n+6}(F^{2^n-6}(\partial\Wtilde[n]))
   =F^{k-2^n+6}(\R) = \R,
\end{equation}
so we must have $D_k \cap \R \ne\emptyset$.

Further, if for some $j<k$ we have $D_j$ intersecting both $\R$ and $\i\R$,
it can be shown by induction that
$D=D_k$ must contain both positive and negative real values, which is
impossible.

If $D_j\cap\i\R=\emptyset$ for all $j<k$,
then since $D_j$ can contain no points of $J_F$, we must
have
\[
D_j\subset\overline{%
  \Piece[1]{1,\I}\cup\Piece[1]{1,\II}\cup\Piece[1]{1,\III}\cup\Piece[1]{1,\IV}}
  \quad\text{for all $j<k-1$.}
  \]
This contradicts our initial hypothesis that $D_0$ intersects $\W[n]$;
so for some $j<k$, we must have $D_j\cap\i\R \neq \emptyset$.
Let $s$ be the maximal index for which $D_s$ intersects the
imaginary axis,  and let $x$ be a point in $D_s\cap\i\R$. Without loss
of generality, we may take $\Im(x)>0$.

As noted earlier, $D_s$ cannot intersect both $\R$ and $i\R$.
Hence $D_s\cap \R=\emptyset$.
Because $D_s$ contains a boundary point of $F^{s}(\Wtilde[n])$, by
\autoref{WtildeProps}\ref{WtildeBoundary} we
must have $s\le 2^n-7$.
Combining this with the fact that $k\ge 2^n-3$ yields $s\le k-4$.

\smallskip
\newcommand{\VR}{V_{2,\R}}
Let $\VR$ be the union of closures of tiles from $\Puz[2]$ which
intersect $J_F\cap\R$.
\autoref{LmImCrit} implies that
$\VR\cap\i\R=[-\lambda y_0,\lambda y_0]\subset J_F$.
Thus $D_s$ intersects $\i\R$ outside $\VR$, so
$\pm\lambda y_0 \not\in D_s$.
Since $D_s$ intersects $F^{s}(\W[n])$ and hence also intersects $\VR$,
we conclude that $D_s$ contains a boundary point of $\VR$.

But since $k-s\ge 4$ and
$F^4(\partial\VR \cup [\lambda y_0,x])\subset \R$,
it follows that $D_k\cap\R$ consists of at least two connected components;
this is impossible.
The contradiction finishes the proof.
\end{proof}

\medskip
Use $\Disk[R]{z}$ to denote the open disk of radius $R$ centered at $z$, and
recall the definition of $\H[1]$ from \autoref{DefWandH}.
From \autoref{DefXnYn}, recall that $\tilde{X}_n$ is the set of points of
$\W[1]$ that eventually land in $\W[n]$ under iterates of $F$, $Y_n$ are
points of $\W[n]$ which never return to $\W[n]$ under
non-trivial iterates of $F$ and $\Sigma_n$ is the set of points in $\W[n]$
that eventually land in $Y_n$ under iterates of $F_{n-1}$.

Applying the Koebe One-Quarter Theorem together with \autoref{LmEscape}
yields the following two useful corollaries, which enable us to estimate the
size of disks which lie outside $\tilde{X}_n$ or inside $\Sigma_n$.

\begin{cor}\label{CoCalcXn} Fix $n\ge 3$. Let $z\in \H[1]\setminus J_F$ and let $k$ be
  such that $F^k(z)\notin \Vslit$. Suppose also that $F^j(z)\notin \Wtilde[n]$
  for  $0\le j\le k$.  Let $D_0$ be the connected component of
  $F^{-k}\bigl(\Disk[R]{F^k(z)}\bigr)$ containing~$z$,
  with 
  $R=\dist(F^k(z), \Vslit)$.
  Then $D_0\cap\tilde{X}_n=\emptyset$. In particular,
  \[
    \Disk[r]{z}\cap \tilde{X}_n=\emptyset,\;\;\text{where}\;\;
    r=\frac{R}{4|DF^k(z)|}.
  \]
\end{cor}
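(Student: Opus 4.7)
The plan is to establish $D_0\cap\tilde X_n=\emptyset$ by iterating the escape mechanism of Lemma~\ref{LmEscape} along the forward orbit of $z$, and then to derive the disk estimate from the Koebe One-Quarter Theorem. I would argue the first claim by contradiction: suppose $w\in D_0\cap\tilde X_n$, so $w\in\W[1]$ and $F^\ell(w)\in\W[n]$ for some $\ell\ge 0$, and split on whether $\ell\le k$ or $\ell>k$.

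For $0\le\ell\le k$, the key observation is that $F^\ell(D_0)$ is connected, contains $F^\ell(z)$, and is carried by $F^{k-\ell}$ into $\Disk[R]{F^k(z)}$, so it lies inside the connected component $E_\ell$ of $F^{-(k-\ell)}\bigl(\Disk[R]{F^k(z)}\bigr)$ through $F^\ell(z)$. Since $\Disk[R]{F^k(z)}\subset\C\setminus\Vslit$, Lemma~\ref{LmEscape} yields $E_\ell\cap\W[n]=\emptyset$ or $E_\ell\subset\Wtilde[n]$; the hypothesis $F^\ell(z)\notin\Wtilde[n]$ excludes the second alternative, and gives the contradiction $F^\ell(w)\notin\W[n]$.

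For $\ell>k$, I combine $F^k(w)\in\Disk[R]{F^k(z)}\subset\C\setminus V_2$ with the identity $V_2=F^{-3}(\W)$ from Remark~\ref{V2}. Whenever $m:=\ell-k\ge 3$, iterates composing beyond three steps force $F^{-m}(\W[n])\subset F^{-m}(\W)\subset F^{-3}(\W)=V_2$, so $\Disk[R]{F^k(z)}\cap F^{-m}(\W[n])=\emptyset$. The remaining cases $m\in\{1,2\}$ are settled by showing $F^{-1}(\W[n])\cup F^{-2}(\W[n])\subset V_2$ for $n\ge 3$; this follows from the tile combinatorics of $\Puz$ around the preimages $F^{-1}(0)=\{\pm x_0\}\subset J_F\subset V_2$, whose small neighborhoods carrying the preimages of $\W[n]$ stay inside $V_2$.

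The ``In particular'' statement follows from the Koebe One-Quarter Theorem. The postcritical set of $F$ sits in $J_F\subset V_2\subset\Vslit$, so $\Disk[R]{F^k(z)}$ avoids every critical value of $F^k$, and the inverse branch $\phi=(F^k|_{D_0})^{-1}$ is univalent on this disk. With $|\phi'(F^k(z))|=|DF^k(z)|^{-1}$, Koebe gives $D_0\supset\Disk[r]{z}$ for $r=R/(4|DF^k(z)|)$, which combined with the first part yields the conclusion. The principal obstacle is the intermediate range $m\in\{1,2\}$: verifying $F^{-m}(\W[n])\subset V_2$ demands a careful tile-level analysis of how $\Puz[n]$-tiles near $0$ lift under $F^{-1}$ and $F^{-2}$ to neighborhoods of $\pm x_0$ and their further preimages, and is precisely where the hypothesis $n\ge 3$ is used to ensure the lifted neighborhoods stay confined to $V_2$.
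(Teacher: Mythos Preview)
Your argument is correct and aligns with what the paper intends (the paper gives no proof, citing only Lemma~\ref{LmEscape} and the Koebe One-Quarter Theorem). One simplification worth noting: the case split $m\ge3$ versus $m\in\{1,2\}$ and the ``tile combinatorics near $\pm x_0$'' you flag as the principal obstacle are unnecessary. For any $\ell>k$ with $F^\ell(w)\in\W[n]$, use $\W[n]\subset\W[2]$ (valid for $n\ge2$) together with the fact that the second pre-renormalization $F_2=F^{4}|_{\W[2]}$ is a well-defined quadratic-like map: this forces $F^j(w)\in\W$ for all $0\le j\le\ell+3$, and since $\ell\ge k+1$ we get in particular $F^{k+3}(w)\in\W$, i.e.\ $F^k(w)\in F^{-3}(\W)=V_2$, contradicting $F^k(w)\in D\subset\C\setminus V_2$. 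This disposes of all $\ell>k$ uniformly and uses only the nesting $\W[n]\subset\W[2]$, not any finer tile structure.
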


\begin{cor}\label{CoCalcSigma} Fix $n\ge 3$. Let $z\in \W[n]$ be such that
  $w=F_{n-1}^s(z)\in Y_n$ for some $s$;
  let $\ell$ be such that $F^\ell(w)\notin \Vslit$.
  Suppose also that $F^j(w)\notin \Wtilde[n]$ for all $0\le j\le \ell$.

  Set 
  $R=\dist(F^\ell(w), \Vslit)$
  and let $D_0$ be the connected component of
  $F_{n-1}^{-s}\Bigl(F^{-\ell}\left(\Disk[R]{F^\ell(w)}\right)\Bigr)$ that
  contains~$z$.
  Then $D_0\subset\Sigma_n$. In particular,
\[
 \Disk[r]{z}\subset\Sigma_n, \;\;\text{where}\;\;
  r=\frac{R}{4|DF^k(z)|} \;\;\text{and}\;\; k=2^{n-1}s+\ell.\]
\end{cor}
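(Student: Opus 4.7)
The plan is to mimic the proof of \autoref{CoCalcXn}, with the new twist that the relevant $F$-pullback of the test disk $\Disk[R]{F^\ell(w)}$ factors through $F_{n-1}^{-s}$ composed with $F^{-\ell}$. Put $\Delta = \Disk[R]{F^\ell(w)}$, which by the choice of $R$ is disjoint from $\Vslit$, and set $k = 2^{n-1}s+\ell$, so $F^k(z) = F^\ell(w)$ is the center of $\Delta$. For $0\le j\le k$, let $\Delta_j$ denote the connected component of $F^{-(k-j)}(\Delta)$ containing $F^j(z)$; in particular $\Delta_k=\Delta$ and $\Delta_0 = D_0$. The first point is that $F^k\colon D_0\to\Delta$ is a conformal isomorphism: by \autoref{ThCrit} all critical values of $F^k$ lie in $\Set{F^j(0)\st 1\le j\le k}\subset J_F\subset V_2\subset\Vslit$ and so are disjoint from $\Delta$, making $F^k|_{D_0}$ unbranched; since $\Delta$ is simply connected, this unbranched cover is univalent.

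The heart of the argument is to verify $D_0\subset\Sigma_n$. Pick any $z'\in D_0$ and set $w' = F_{n-1}^s(z') = F^{2^{n-1}s}(z')$. By \autoref{LmEscape}, each $\Delta_j$ either is disjoint from $\W[n]$ or is contained in $\Wtilde[n]$; the assumption $F^j(w)\notin\Wtilde[n]$ (for $1\le j\le\ell$) rules out the second alternative for the indices $2^{n-1}s+1,\ldots,2^{n-1}s+\ell$, so $\Delta_{2^{n-1}s+j}\cap\W[n]=\emptyset$ and therefore $F^j(w')\notin\W[n]$ for $1\le j\le\ell$. For $j>\ell$, $F^\ell(w')\in\Delta$ lies outside $V_2$, which contains the filled Julia set $K_F$; standard quadratic-like dynamics then yield that the forward orbit of $F^\ell(w')$ escapes and does not return to $\W[n]$. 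Thus $w'\in Y_n$, and because $D_0$ is the branch of $F_{n-1}^{-s}$ rooted at $z\in\W[n]$, we also have $z'\in\W[n]$, giving $z'\in\Sigma_n$. The ``in particular'' claim then drops out of the Koebe one-quarter theorem applied to $\phi=(F^k|_{D_0})^{-1}\colon\Delta\to D_0$ at the center $F^k(z)$: one obtains $D_0\supset\phi(\Delta)\supset\Disk[r]{z}$ with $r = \tfrac{R}{4}|\phi'(F^k(z))| = R/(4|DF^k(z)|)$.

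The main obstacle is showing that \emph{all} of $D_0$, rather than just a small disk at $z$, lies in $\Sigma_n$. This requires tracking the entire pullback chain $\Delta_k,\Delta_{k-1},\ldots,\Delta_0$ via \autoref{LmEscape} and using the escape hypothesis on $w$ to block each intermediate $\Delta_{2^{n-1}s+j}$ from being swallowed by $\Wtilde[n]$. A secondary subtlety is the containment $D_0\subset\W[n]$ (not merely $\Wtilde[n]$), which depends on selecting $D_0$ as the specific $F_{n-1}^{-s}$-branch originating at $z\in\W[n]$ and using continuity of that branch. Once these are in place, the univalence of $F^k|_{D_0}$ is a direct consequence of \autoref{ThCrit}, and the Koebe step is routine.
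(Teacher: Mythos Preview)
Your approach---pulling back $\Delta=\Disk[R]{F^\ell(w)}$ along the chain $\Delta_k,\ldots,\Delta_0$, invoking \autoref{LmEscape} at each intermediate stage, and finishing with the Koebe one-quarter theorem---is exactly what the paper intends (the paper gives no further proof beyond ``apply Koebe together with \autoref{LmEscape}''). The univalence of $F^k|_{D_0}$ and the verification that $\Delta_{2^{n-1}s+j}\cap\W[n]=\emptyset$ for $1\le j\le\ell$ are correct.

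There is, however, a gap in the step asserting $z'\in\W[n]$ (and likewise $w'\in\W[n]$, which you need for $w'\in Y_n$). Your justification---that $D_0$ is the $F_{n-1}^{-s}$-branch rooted at $z\in\W[n]$, together with continuity---does not suffice: \autoref{LmEscape} only gives $D_0\subset\Wtilde[n]$ and $\Delta_{2^{n-1}s}\subset\Wtilde[n]$, both of which strictly contain $\W[n]$, and taking the $F_{n-1}^{-s}$-branch constrains you to $\W[n-1]$, not $\W[n]$. Since $\Sigma_n\subset\W[n]$ by definition, the literal conclusion $D_0\subset\Sigma_n$ cannot follow from what you have. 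This appears to be a minor imprecision in the corollary as stated; what the downstream algorithm actually needs, and what your argument does deliver, is that the portion of $D_0$ lying in $\W[n]$ belongs to $\Sigma_n$. A smaller point: the claim that once $F^\ell(w')$ lies outside $V_2$ its forward orbit ``does not return to $\W[n]$'' needs one more line---escaping $K_F$ does not by itself rule out a visit to $\W[n]$ before leaving $W$. The missing observation is that $V_2=F^{-3}(W)$ forces the forward orbit to remain outside $V_2\supset\W[n]$ until it exits $W$ (if $p\notin V_2$ then $F^3(p)\notin W$, so $F(p)\notin F^{-3}(W)=V_2$).
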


\smallskip
With \autoref{CoCalcXn} and \autoref{CoCalcSigma} in hand, we have
explicit, computable criteria for verifying that the hypotheses of
\autoref{PropRecMeas} hold, showing that the Hausdorff dimension
of $J_F$ is less than two.

\medskip
Specifically, for some $n$ we need to establish upper bounds on the
quantities $\tilde\eta_n$ and $M_n$.
We now give algorithms to do this.  These are presented assuming that
$F(z)$, $F'(z)$, $\Wtilde[n]$, $\Piece{0}$, etc.\
can be calculated exactly. In \autoref{SecComputation}, we discuss
how to account for finite precision.

\medskip
To bound $\tilde\eta_n$, we need an upper bound on $\ar(\tilde{X}_n)$,
since, $\tilde\eta_n = \ar(\tilde{X}_n)/\ar(\W[1])$ by definition.
Exploiting the symmetry with respect to the axes allows us to work in
first quadrant only.

\begin{algorithm}\label{AlgXn}
Fix $n\ge 3$ and $r$ small.
To compute an upper bound for $\ar(\tilde{X}_n)$, we find a collection
$\DD_{n,r}$ of disks with radius $r$ which cover  $\tilde{X}_n$ in the
first quadrant.
First, select a grid of points $z$ so that
  $\bigcup_z \Disk[r]{z}$ covers $\Piece[1]{0}$.
  Also, choose some upper bound $K$ on the maximum number of
  iterations. For each point $z$ in the grid run the following routine.
  \begin{enumerate}
    \item \label{XnIter} For $0< j<K$,  calculate $F^j(z)$ and $DF^j(z)$.
    \item\label{XnNoescape}
      If $F^j(z)\in\Wtilde[n]$ for some $j$, or if $F^j(z)\in \H[1]$
      for all $j<K$, add $\Disk[r]{z}$ to $\DD_{n,r}$, and exit the routine.
    \item\label{XnCheck}
      Let $\ell$ be such that  $F^\ell(z)\not\in\H[1]$.
      If $\dist(F^\ell(z),\Vslit)>4r\,|DF^\ell(z)|$, then by
      \autoref{CoCalcXn}, the entire disk $\Disk[r]{z}$ does not
      intersect $\tilde{X}_n$.
      Otherwise, add $\Disk[r]{z}$ to $\DD_{n,r}$.
  \end{enumerate}
\end{algorithm}

\begin{rem}
The cover $\DD_{n,r}$ can be calculated from $\DD_{n,s}$ for $r<s$ by
replacing the grid covering $\Piece[1]{0}$ by one covering
$\DD_{n,s}$.
\end{rem}

\medskip
In order to bound $M_n = M(\Sigma_n)$ from above, we need to construct a
lower bound on $\Sigma_n$ (\ie a subset of $\Sigma_n$).  As before, we can
exploit symmetry and work only in the first quadrant.

\begin{algorithm}\label{AlgSigma}
Fix $n\ge 3$ and $r$ small. To get a lower bound for $\Sigma_n$, we
find a collection $\EE_{n,r}$ of disks $\Disk[r]{z} \subset
\lambda^{-n}\Sigma_n$. As in \autoref{AlgXn}, select a grid of points $z$
  so that  $\Piece{0} \subset \bigcup_z \Disk[r]{z}$,
  and fix a positive integer $K$. For each point $z$ from the grid run the
  following routine.
\begin{enumerate}
  \item \label{SigmaIterz}
    For $0< j<K$ calculate $F^j(z)$ and $DF^j(z)$.
  \item\label{SigmaNoEscapeH1}
    If $F^j(z)\in \H[1]$
    for all $j<K$, discard $z$ and exit the routine.
  \item\label{SigmaEscapeH1}
    Let $0\leq k<K$ be the smallest number such that
    $F^k(z)\not\in\H[1]$. Set $w=\lambda^nF^k(z).$
  \item \label{SigmaIterw}
    For $0< j< K$,  compute $F^{j}(w)$ and $DF^{j}(w)$.
  \item \label{SigmaNoEscape}
    If $F^j(w)\in\Wtilde[n]$ for some $j$, or if $F^j(w)\in \H[1]$
    for all $j$, discard $z$ and exit the routine.
  \item\label{SigmaCheck}
    Let $\ell$ be such that  $F^\ell(w)\not\in\H[1]$.
    If $\dist(F^\ell(w),\Vslit)>4r\,|DF^\ell(w)|\cdot|DF^k(z)|$,
     then by \autoref{CoCalcSigma}, the entire disk
      $\Disk[r\lambda^n]{z}$ is contained in $\Sigma_n$;
      add the disk to $\EE_{n,r}$.
  \end{enumerate}
\end{algorithm}

\begin{rem}\label{RemFSpeedUp}
In step~\ref{XnIter} of \autoref{AlgXn} and steps~\ref{SigmaIterz}
and~\ref{SigmaIterw} of \autoref{AlgSigma}, we need not (and \emph{should
  not}) compute $F^j(z)$  for all $j<K$.
Instead, we restrict our attention to iterates $z_k=F^{j_k}(z)$
defined inductively as follows. Let $j_0=0$.
Assuming $z_k\neq 0$, there is a maximal number $m_k$ such that $z_k\in\W[m_k]$;
let $i_k=\max\{0,m_k-1\}$.
Set $j_{k+1}=j_k+2^{i_k}$, and calculate
$z_{k+1}=F^{2^{i_k}}(z_k)$ and
$|DF^{j_{k+1}}(z)|=|DF^{j_k}(z)|\cdot|DF^{2^{i_k}}(z_k)|$ via~\eqref{EqF2m}:
\[
  F^{2^{i_k}}(z_k)=(-\lambda)^{i_k}F({z_k}/{\lambda^{i_k}}),
   \qquad
  |DF^{2^{i_k}}(z_k)|= |DF({z_k}/{\lambda^{i_k}})|.
\]
Observe that $F^{2^{i_k}}$ is the first return map from $\W[m_k]$ to
$\W[m_k-1]$ (see \autoref{RemCritOrb}\ref{RemIterW}).
Given $n\geq 3$ we have $\Wtilde[n]\subset \W[n-2]$.
Hence, for $z_k\notin\Wtilde[n]$, since $m_k\leq n-1$, it is not possible to have $F^l(z_k)\in
\Wtilde[n]$ for any $0<l<2^{i_k}$.
Also, it can be shown that if $z_k\in \H[1]$, we must have $F^l(z_k)\in \H[1]$ for $0<l<2^{i_k}$.

Also, we should consider all $k<K$ rather than $j<K$, bounding
the number of evaluations of $F^{2^{i_k}}$ rather than the length of the orbit
of $z$.

\end{rem}


\medskip
Using these algorithms, we complete the proof of our \refMainThm, computing
that
\[
 M_6=M(\Sigma_6) < 9.4,       \qquad
 \tilde{\eta}_6=\frac{\ar(\tilde{X}_6\cap\Piece[1]{0,\I})}{\ar(\Piece[1]{0,\I})}
<\frac{0.09}{\ar(\Piece[]{0})}.
\]
We obtain $\tilde\eta_6 M_6\ar(\Piece[]{0})< 0.846<1$, so $J_F$ has Hausdorff
dimension less than 2.

\section{Computational considerations}\label{SecComputation}

In this section, we discuss how we can be certain that the computation of
the bounds on $\tilde{\eta}_6$, $M(\Sigma_6)$, and $|\Piece{0}|$ (which
give us our \refMainThm) are sufficiently accurate, even though the bounds
are necessarily computed with finite precision.  We have two potential
sources of error: we can not know the map $F$ exactly (and
consequently we must also approximate $\lambda$ and the domains
$\W[n]$, $\H$, etc.), and there will be some errors introduced by the
approximation of exact quantities $z$ by those representable on a
computer.

\smallskip
The calculations for this paper were done primarily in Python with
double-precision floating-point arithmetic satisfying the
IEEE~754-2008 standard \cite{IEEE754}.  In this context, only numbers
of the form $\alpha \times 2^e$ are representable, where $\alpha$ is a
53-bit signed (binary) integer, and the exponent $e$ satisfies
$-1021 \le e \le 1024$.
In particular, the only real numbers which can be
represented exactly are certain dyadic rationals within a (large) range.
Any real number $x$ in the representable range can be approximated by a
floating-point number $\FLT{x}$ so that $|x-\FLT{x}|< \uflt|x|$.
This number $\uflt$ is called the \defn{unit roundoff}; for IEEE
double-precision
\[ \uflt \,=\, 2^{-53} \,\approx\, 1.11\times 10^{-16}. \]
For more details, the reader is referred to \cite{Higham} or
\cite{Goldberg}, for example.

While the standard guarantees that the result of a \emph{single} arithmetic
operation ($+$, $-$, $*$, $/$) carried out on two (real) floating-point
numbers will be correctly rounded with a relative error of at most $\uflt$,
we need to ensure that these small errors do not accumulate such that we lose
control of the calculation. We primarily need to work with double-precision
complex numbers;  in \cite{BrentPercivalZimmermann}, it is shown that the
relative error for complex arithmetic is bounded by $\sqrt{5}\,\uflt$.

\smallskip
Throughout this section, we shall use the notation $\FLT{x}$ to denote the
approximation of the exact quantity $x$ by one that is representable
as a floating point number.

\subsection{Approximating the map $F$ and the value of $\lambda$.}

In the 1980s, Lanford \cite{Lanford} calculated a high-precision
approximation of $F$ as an even polynomial of degree~80.  Such
approximations can be computed to precision $10^{-n}$ in
a number of arithmetic operations polynomial in $n$ \cite{FeigPolyTime},
although the approximation given by Lanford is sufficient for our purposes.

Lanford gives strict error bounds on his approximation (which he calls
$g_n^{(0)}$ but we refer to as~$\apF$ for notational
consistency). Specifically, we have the following.

\begin{prop}\label{LanfordsBounds}
Let $\apF$ be the degree~80 polynomial approximation
of $F$ from \cite{Lanford}.  The following upper bounds on the error apply.

\centering\begin{tabular}{ll c@{\quad} ll c@{\quad} ll}
\multicolumn{2}{c}{$|F(z)-\apF(z)|$}    &&
\multicolumn{2}{c}{$|F'(z)-\FLT{F'}(z)|$}  &&
\multicolumn{2}{c}{$|F''(z)-\FLT{F''}(z)|$} \\
\cline{1-2} \cline{4-5} \cline{7-8} \\[-2ex]
$1.5 \times 10^{-23}$ & for $|z|<1.224$ && 
$1.5 \times 10^{-22}$ & for $|z|<1.12$  &&
$1.5 \times 10^{-21}$ & for $|z|<1.02$  \\
$5.5 \times 10^{-13}$ & for $|z|<1.414$ && 
$5.5 \times 10^{-12}$ & for $|z|<1.31$  &&
$5.5 \times 10^{-11}$ & for $|z|<1.21$ \\
$5.0 \times 10^{-7}$ & for $|z|<2.449$ && 
$5.0 \times 10^{-6}$ & for $|z|<2.34$  &&
$5.0 \times 10^{-5}$ & for $|z|<2.24$ \\
$1.7 \times 10^{-2}$ & for $|z|<2.828$ && 
$1.7 \times 10^{-1}$ & for $|z|<2.72$ &&
$1.7              $ & for $|z|<2.62$ \\
\end{tabular}
\end{prop}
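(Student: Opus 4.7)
The plan has two steps: first, establish the bounds on $|F(z)-\apF(z)|$ at the four radii in the leftmost column; second, bootstrap these to the derivative bounds via Cauchy's estimate applied on concentric circles.

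For the first step, the innermost bound $|F(z)-\apF(z)|\le 1.5\times 10^{-23}$ for $|z|<1.224$ is exactly what Lanford~\cite{Lanford} proves by rigorous interval-arithmetic verification of a contraction property for the renormalization operator near $\apF$; this is the heart of his computer-assisted existence proof for the period-doubling fixed point. The bounds on the three larger disks are then obtained from this innermost bound combined with Epstein's theorem, which guarantees that $F$ admits an analytic extension to a large domain. Writing
\[
 F(z)-\apF(z)\;=\;\sum_{n\ge 41} a_n z^{2n},
\]
one bounds $|a_n|$ rigorously by applying Cauchy's coefficient inequality to $F-\apF$ on a circle inside Epstein's domain (where both $|F|$ and $|\apF|$ can be controlled explicitly). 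Summing the resulting geometric-type tail yields the claimed bounds at $|z|<1.414$, $|z|<2.449$, and $|z|<2.828$; these quantities are already tabulated in~\cite{Lanford} and the proposition simply records them in the form we need.

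For the second step, the key observation is that consecutive radii in each row are separated by at least $0.1$: $1.224-1.12\ge 0.1$, $1.12-1.02\ge 0.1$, and similarly for the other rows. Since $F-\apF$ is holomorphic on the relevant disk, Cauchy's derivative estimate gives
\[
 |F'(z_0)-\FLT{F'}(z_0)|\;\le\;\frac{\max_{|w|=R}|F(w)-\apF(w)|}{R-r}
 \qquad\text{for } |z_0|\le r<R,
\]
with $R-r\ge 0.1$, so the error gets multiplied by at most $10$. Applying this once to each first-column bound produces the second-column bound on a disk of radius $0.1$ smaller; applying Cauchy a second time (using the freshly derived middle-column bound on its own disk) yields the third-column bound, picking up another factor of $10$. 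This reproduces each row of the table exactly.

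The main obstacle in executing this plan cleanly is the first step: rigorously bounding the Taylor-coefficient tail of $F-\apF$ on the three larger disks. One must carefully combine Lanford's innermost bound with a quantitative grip on how large $F$ can be on Epstein's extended domain, and then propagate this through Cauchy's coefficient inequality without losing control. Since all of the bounds we need are either stated explicitly in~\cite{Lanford} or follow from a direct Cauchy-estimate calculation applied to bounds already proven there, we are content to quote them; the purpose of \autoref{LanfordsBounds} is only to put the numbers into the exact shape required by the finite-precision error analysis of \autoref{AlgXn} and \autoref{AlgSigma}.
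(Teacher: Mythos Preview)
Your second step---bootstrapping from the function bounds to the derivative bounds via Cauchy's estimate on circles shrunk by $0.1$---is exactly what the paper does, and your observation that each row's radii decrease by at least $0.1$ is the right one.

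Your first step, however, is over-engineered and contains a genuine error. The paper simply quotes \emph{all four} first-column bounds directly from Lanford; it does not re-derive the outer three from the innermost one. Your proposed re-derivation via the Taylor tail
\[
 F(z)-\apF(z)\;=\;\sum_{n\ge 41} a_n z^{2n}
\]
is incorrect: Lanford's $\apF$ (his $g_n^{(0)}$) is a numerically computed approximate fixed point, \emph{not} the degree-$80$ Taylor truncation of $F$. Hence $F-\apF$ has nonzero coefficients in \emph{all} degrees, and the difference cannot be bounded purely as a geometric tail starting at degree~$82$. Lanford's actual argument bounds $\|F-\apF\|$ on nested disks by combining his contraction estimate with explicit control of the renormalization operator on a polydisk neighborhood of $\apF$ in function space; this is what produces the four tabulated bounds, and the proposition here merely records them. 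Since you ultimately say ``these quantities are already tabulated in~\cite{Lanford},'' just drop the Taylor-tail sketch entirely and cite Lanford for all four, as the paper does.
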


\begin{proof}
The bounds for $|F(z)-\apF(z)|$ are taken from \cite{Lanford}.
Those for the first and second derivatives follow Lanford's bounds
via an application of Cauchy's derivative inequalites.
\end{proof}

\begin{cor}\label{CorLambda} All of the digits in the approximations
\[ \lambda \approx 0.39953\;52805\;23134\;48985\;75
   \qquad
   1/\lambda \approx 2.50290\;78750\;95892\;82228\;39
\]
are correct (the next two digits of $\lambda$ are between $65$ and $96$).
\end{cor}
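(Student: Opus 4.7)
The key is to reduce the computation of $\lambda$ to a single polynomial evaluation via the functional equation~\eqref{EqCvitFeig}. Setting $z = 0$ in $F(z) = -\tfrac{1}{\lambda} F^2(\lambda z)$ and using $F(0) = 1$ yields
\[
1 \;=\; F(0) \;=\; -\tfrac{1}{\lambda}\, F^2(0) \;=\; -\tfrac{1}{\lambda}\, F(1),
\]
so $\lambda = -F(1)$ (this is precisely \autoref{PropX0}). Thus all 22 digits of $\lambda$ will follow from a sufficiently accurate enclosure of $F(1)$.

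The plan is to evaluate the Lanford polynomial $\apF$ at $z = 1$ using multi-precision interval arithmetic (for instance MPFR/MPFI), carrying, say, 30 decimal digits of working precision. Horner's scheme on a degree-80 polynomial contributes an accumulated rounding error many orders of magnitude below the working precision, giving a certified enclosure of $\apF(1)$ of width well under $10^{-25}$. Since $|1| < 1.224$, \autoref{LanfordsBounds} yields the truncation bound $|F(1) - \apF(1)| < 1.5 \times 10^{-23}$. Combining the two gives a certified enclosure
\[
\bigl|\, \lambda - (-\apF(1)) \,\bigr| \;<\; 1.5 \times 10^{-23} \,+\, 10^{-25}
\;<\; 2 \times 10^{-23},
\]
which is well below $5 \times 10^{-22}$ and therefore verifies each of the 22 displayed digits of $\lambda$. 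The parenthetical refinement on the next two digits follows directly from the narrower interval that the computation actually produces.

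For $1/\lambda$, one option is to propagate the bound above through reciprocation,
\[
\left|\, \tfrac{1}{\lambda} - \tfrac{1}{-\apF(1)} \,\right|
\;=\; \frac{|\lambda + \apF(1)|}{|\lambda \cdot \apF(1)|}
\;<\; \frac{2 \times 10^{-23}}{\lambda^2}
\;<\; 1.3 \times 10^{-22};
\]
more cleanly, one performs a single certified reciprocation inside the same high-precision interval routine, obtaining an enclosure of $1/\lambda$ of width below $10^{-22}$. Either way, all 22 displayed digits of $1/\lambda$ are validated.

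The main obstacle is purely computational bookkeeping rather than mathematical: one must ensure rigorous rounding bounds at every step of the multi-precision Horner evaluation of $\apF(1)$. This is routine for certified interval libraries, and the combined truncation-plus-rounding error is dominated by Lanford's $1.5 \times 10^{-23}$ term, leaving comfortable margin for the claim.
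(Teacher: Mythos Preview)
Your approach is correct and is exactly the one implicit in the paper, which offers no explicit proof beyond placing the statement as a corollary of \autoref{LanfordsBounds}: use $\lambda=-F(1)$ from \autoref{PropX0}, evaluate $\apF(1)$ in high precision, and invoke the $1.5\times10^{-23}$ bound. One small numerical caveat: your claim that the certified enclosure of $1/\lambda$ has ``width below $10^{-22}$'' is optimistic, since propagating the $1.5\times10^{-23}$ bound through $1/\lambda^2\approx6.26$ already gives a half-width near $9.4\times10^{-23}$; validating the final displayed digit of $1/\lambda$ therefore hinges on where the computed interval actually sits rather than on the width estimate alone.
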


If $\apF$ is evaluated at some $\apz$ using Horner's method, we have
the following bound on the accumulated arithmetic error in evaluating the
polynomial:
\[
 |\apF(z) - \apF(\apz)| <
 \sqrt{5}\,\frac{80\uflt}{1-80\uflt}\sum_{i=0}^{40} |a_i||\apz|^{2i},
\]
where $a_i$ are the coefficients of $\apF$
(see \cite[ch.~5]{Higham} which presents the argument from
\cite{Wilkinson}); a similar bound applies to the derivative.
Putting this observation together with
\autoref{LanfordsBounds} gives us the following.

\begin{cor}\label{PropEvalFError}
If $\apz$ is a double-precision approximation of $z$ and $\apF$
is Lanford's polynomial approximation of $F$ evaluated in
double-precision, then we have the following upper bounds on the error.

\centering\begin{tabular}{ll c@{\qquad} ll}
\multicolumn{2}{c}{$|F(z)-\apF(\apz)|$}    &&
\multicolumn{2}{c}{$|F'(z)-\FLT{F'}(\apz)|$} \\
\cline{1-2} \cline{4-5} \\[-2ex]
$5.2884 \times 10^{-14}$ & for $|z|<1$ &&
$7.2771 \times 10^{-14}$ & for $|z|<1$  \\
$6.4430 \times 10^{-13}$ & for $|z|<1.414$ && 
$5.5875 \times 10^{-12}$ & for $|z|<1.31$ \\
$5.0001 \times 10^{-7}$ & for $|z|<2.449$ && 
$5.0001 \times 10^{-5}$ & for $|z|<2.34$ \\
$1.7001 \times 10^{-2}$ & for $|z|<2.828$ && 
$1.7001 \times 10^{-1}$ & for $|z|<2.72$
\end{tabular}
\end{cor}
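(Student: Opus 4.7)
The plan is to combine three sources of error via the triangle inequality:
\[
|F(z)-\apF(\apz)| \;\le\; |F(z)-\apF(z)| \,+\, |\apF(z)-\apF(\apz)| \,+\, |\apF(\apz)-\FLT{\apF}(\apz)|,
\]
where the leftmost term is the mathematical approximation error bounded by Lanford, the middle term accounts for the change of argument from $z$ to its double-precision representation $\apz$, and the rightmost term is the accumulated roundoff incurred in evaluating $\apF$ at $\apz$ using Horner's scheme in IEEE 754 double precision.

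First I would bound the mathematical-error term directly by \autoref{LanfordsBounds}. For the argument-perturbation term, since $\apz$ is the double-precision representation of $z$ we have $|z-\apz|\le\uflt|z|$, and the mean value inequality gives
\[
|\apF(z)-\apF(\apz)| \;\le\; |z-\apz|\sup_{|w|\le|z|+\uflt|z|}|\apF'(w)|,
\]
where the supremum is controlled using Lanford's bound on $F'$ at a slightly larger radius combined with a Cauchy estimate. For the floating-point arithmetic term I would invoke the Horner's-method error bound stated in the paragraph before the corollary,
\[
|\apF(\apz)-\FLT{\apF}(\apz)| \;<\; \sqrt{5}\,\frac{80\uflt}{1-80\uflt}\,\sum_{i=0}^{40}|a_i||\apz|^{2i},
\]
which is the Wilkinson-style forward error bound for Horner's method adjusted by the factor $\sqrt{5}$ to account for complex arithmetic as in \cite{BrentPercivalZimmermann}.

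The next step is to rigorously bound the coefficient sums $S(r) = \sum_{i=0}^{40}|a_i|r^{2i}$ at the four test radii $r=1,\,1.414,\,2.449,\,2.828$, using Lanford's tabulated coefficients in interval arithmetic; adding the three contributions at each radius and checking that the result does not exceed the stated constants completes the bound on $|F(z)-\FLT{\apF}(\apz)|$. The analogous bound for $F'$ follows by the same three-term decomposition applied to the formal derivative polynomial $\apF'$ (degree $79$, coefficients $2i\,a_i$), with the $F'$ estimates from \autoref{LanfordsBounds} replacing those for $F$. For $|z|<1$ the Lanford term is of order $10^{-23}$ and is absorbed into the much larger Horner contribution ($\sim\sqrt{5}\cdot 80\uflt\cdot S(1)$), which is what sets the $5.28\times 10^{-14}$ constant; at the outer radii the three terms become comparable, giving the looser bounds listed.

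The main obstacle is the concrete numerical work in the last step: at $r=2.828$ the sum $S(r)$ is dominated by its tail, where Lanford's coefficients have grown substantially, so one must carefully certify the tail estimate and guard against loss of significance and possible overflow of intermediate Horner products. Apart from this enumerative coefficient-sum bookkeeping, the argument is entirely standard backward error analysis grafted onto Lanford's a~priori bounds.
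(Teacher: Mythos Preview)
Your proposal is correct and follows essentially the same approach as the paper: combine Lanford's a~priori bounds (\autoref{LanfordsBounds}) with the Wilkinson--Horner forward error bound (scaled by $\sqrt{5}$ for complex arithmetic) via the triangle inequality, then tabulate the resulting numbers at the four radii. The paper's argument is terser---it does not separately break out your middle argument-perturbation term $|\apF(z)-\apF(\apz)|$, effectively absorbing it into the Horner roundoff---but your three-term decomposition is just a slightly more explicit version of the same computation.
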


Observe that for $|z|<1$, the error in using $\apF(\apz)$ is dominated
by the accumulated round-offs (since $F$ is approximated by $\apF$
to better than machine precision for $|z|<\sqrt{6}$);
for $|z|>\sqrt{2}$, the error is
dominated by the approximation of $F$ by $\apF$.

\begin{rem}\label{RemSpeedErrs}
As noted in \autoref{RemFSpeedUp}, we will often want to evaluate
$F^{2^n}\!(z)$ as $(-\lambda)^n F(z/\lambda^n)$.
Since \autoref{CorLambda} gives $\lambda$ and $1/\lambda$ with greater
precision than can be  represented as a floating-point number, we can
calculate floating-point approximations of $\lambda^n$ and
$\lambda^{-n}$ with a relative error of no more than
$\frac{n\uflt}{1-n\uflt}<1.0102n\uflt < 1.13n\times 10^{-16}$.
\end{rem}

\begin{rem}\label{RemCompositionErrs}
By the Koebe Distortion Theorem, if $g\from\Disk[|z|]{0} \to \C$
is univalent and $|\epsilon|<|z|$, then
\[
|g(\epsilon) - g(0)| \le
  \frac{|\epsilon|}{(1-|\epsilon|/|z|)^2}\cdot |g'(z)|.
\]
This observation will be useful in calculating accumulated error bounds for
compositions.
\end{rem}


\subsection{Approximating the sets $\W$, $\H[1]$, $V_2$, and $\Wtilde[6]$}

\begin{Def}
For complex numbers $a$ and $b$, let
\[ \CBOX{a}{b} = \Set{z \st \Re(a) < \Re(z) < \Re(b), \quad
                            \Im(a) < \Im(z) < \Im(b)}, \]
that is, the open rectangle in $\C$ with $a$ on its lower left corner and $b$
on its upper right.

Also, for a set $A$ in the first quadrant, let $\ICL(A)$ be the
interior of the closure of the union of the four symmetric copies of
$A$, that is
\[ \ICL(A) = \inter \closure{A \cup -A \cup A^* \cup -A^*}, \]
where $A^*$ is the set of complex conjugates of points of $A$,
and $-A$ is the negation of all points in $A$.
\end{Def}

\begin{Def}\label{DefHoutHin}
Define the following sets of floating-point numbers.
\begin{align*}
  \Pin  = \CBOX{0}{2.07+2.06i}  \quad&\text{and}\quad
  \Pout = \CBOX{0}{2.495+2.81i}, \\
  \Qin  = \CBOX{2.07}{3.75+1.65i} \quad&\text{and}\quad
  \Qout = \CBOX{2.495}{4.25+2.35i}.\\[-.5\baselineskip]
  \intertext{Then let}
  \phantom{x}\\[-1.6\baselineskip]
  \Win  = \ICL(\Pin) \quad&\text{and}\quad
  \Wout = \ICL(\Pout), \\
  \Hin  = \ICL(\Pin \cup \Qin) \quad&\text{and}\quad
  \Hout = \ICL(\Pout\cup \Qout). \\[-.5\baselineskip]
\intertext{Let $\Vout$ be the filled polygon with  vertices at\;
  $0$, ${4.075i}$, ${9.33+.85i}$, and ${9.33}$; set
}
  \Vtwo &= \ICL\left(\Vout[2]\right).\\[-.5\baselineskip]
 \intertext{Finally, set}
 \apPtilde_6 =\Gen[6]{(\Pout\cup\CBOX{1.30+2.81i}{2.2+3.23i})}
 \quad&\text{and}\quad
 \apWtilde_6 = \ICL(\apPtilde_6).
\end{align*}
\end{Def}

\begin{figure}[thb]
 \centerline{\includegraphics[height=9\baselineskip]{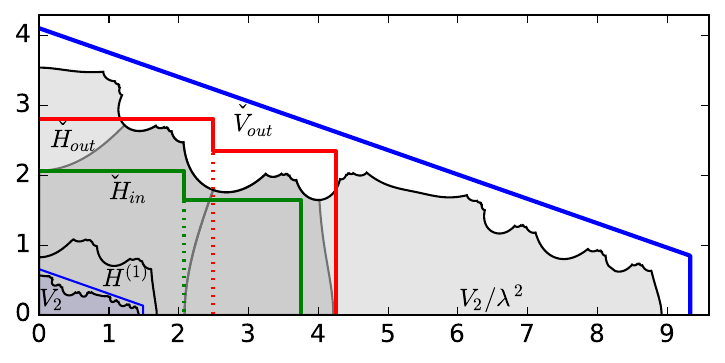} \quad
             \includegraphics[height=9\baselineskip]{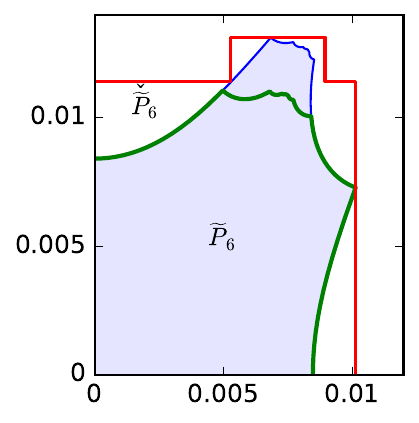}}
 \caption{\label{FigHin}
   As in \autoref{DefHoutHin}, the sets
   $\Hout$~(in red), $\Hin$~(green), and $\Vout$~(blue) are shown in the first
   quadrant, along with
   $\H$ (shaded gray) and $V_2$ scaled by a factor of $\lambda^2$ (also
   shaded).
   On the right are $\Ptilde[6]$ (shaded blue), $\Piece[6]{0}$~(outlined in
   green) and $\apPtilde_6$ (outlined in red).
 }
\end{figure}

\begin{rem}\label{RemFlambda}
As should be apparent from the figures, it is possible to obtain much better
approximations of the relevant sets than given in \autoref{DefHoutHin}.
To do so, one  can exploit the fact that the set ${\Piece{0,\I}}$ is fixed
under the map $F_{\lambda}(z)=\overline{F(\lambda z)}$ (see \cite{Buff}).
$F_{\lambda}$ has an attracting fixed point on
$\partial{\Piece{0,\I}}$ at $x_0$, and a repelling fixed point at
$x_1\approx 1.831+2.683i$.  This point $x_1$ is the unique point in
$\partial{\Piece{0,\I}}\cap\partial\widehat{W}$.
Taking repeated preimages of the segment $[0,\lambda y_0]$ by the map
$\apF_\lambda$ yields a good approximation of $\Piece{0,\I}$;
other pieces of $\Puz$ can then be approximated via preimages of $\Piece{0}$.
However, the polygonal sets of \autoref{DefHoutHin} are much easier to
obtain sharp error bounds on, and are sufficent for our purposes.
\end{rem}
\goodbreak

\begin{lem}\label{LmWapprox} One has
$\Win\subset \W \subset \Wout$, \;
$\Hin[1]\subset \H[1] \subset \Hout[1]$, \;
$V_2 \subset \Vtwo$,  \;and\;
$\Wtilde[6] \subset  \apWtilde_6$.
\end{lem}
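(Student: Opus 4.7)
The plan is to prove \autoref{LmWapprox} by rigorous computer-assisted verification, handling all four containments by a common scheme. The key observation is that each of the four curved sets has a boundary lying in $\widehat{F}^{-k}(\R)$ for an explicit $k$: $k=1$ for $\W$ and $\H[1]$ (directly from \autoref{DefTiles}), $k=4$ for $V_2$ (via the identity $V_2 = F^{-3}(\W)$ of \autoref{V2}), and $k=58$ for $\Wtilde[6]$ (by \autoref{DefWtilde}). A polygonal region then lies inside or outside the corresponding curved set precisely when certain sign conditions on $\Im \widehat{F}^k(z)$ hold along the polygon's boundary; I would certify these by evaluating $\apF$ via \autoref{PropEvalFError} on a sufficiently fine mesh of $\partial P$ and using derivative bounds from \autoref{LanfordsBounds} (with \autoref{RemCompositionErrs} for iterated maps) to safely interpolate between mesh points.

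Take $\Win \subset \W$ as the prototype. First, $\Pin \cap \R = [0, 2.07] \subset [0, x_0/\lambda] = \overline{\Piece{0,\I}} \cap \R$, using $x_0/\lambda > 2.08$ from \autoref{PropX0} and \autoref{CorLambda}. Each tile of $\Puz$ maps univalently onto $\mh_+$ or $\mh_-$ under $\widehat{F}$, and $\Piece{0,\I}$ maps to $\mh_-$ since $\widehat{F}$ decreases monotonically from $\widehat{F}(0) = 1$ to $\widehat{F}(x_0/\lambda) = -1/\lambda$. It therefore suffices to show $\Im \widehat{F}(z) < 0$ throughout $\Pin \cap \mh_+$; since $\Im \widehat{F}$ is harmonic and nonconstant, the maximum principle reduces this to a one-iterate sign check on the top and right edges of $\partial \Pin \cap \mh_+$. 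The reverse containment $\W \subset \Wout$ will follow by verifying the opposite sign $\Im \widehat{F}(z) > 0$ on $\partial \Pout \cap \mh_+$ (off the axes), combined with connectedness and boundedness of $\Piece{0,\I}$. The pair $\Hin[1] \subset \H[1] \subset \Hout[1]$ is handled identically with $H_\I = \inter\overline{\Piece{0,\I} \cup \Piece{1,\I}}$, whose real boundary is $[0, x_0/\lambda^2]$ by \autoref{LmCritPoints}. For $V_2 \subset \Vtwo$, I would check that $F^3(z) \notin \Wout$ for $z \in \partial \Vtwo$ in the first quadrant; by the already-proven $\W \subset \Wout$ this gives $F^3(z) \notin \W$, i.e.\ $z \notin V_2$---a three-iterate computation within reach of \autoref{PropEvalFError} and \autoref{RemCompositionErrs}.

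The hard part will be $\Wtilde[6] \subset \apWtilde_6$: na\"ively $\partial \Wtilde[6] \subset \widehat{F}^{-58}(\R)$, and a direct 58-fold iteration in double precision is hopeless. I would instead exploit the recursive scaling \autoref{WtildeProps}(\ref{PtildeInduction}), namely $F^{56}(\Ptilde[6]) = \Ptilde[3] \subset \Wtilde[3] = \W[1]$, together with the Cvitanovi\'c--Feigenbaum identity \eqref{EqF2m}, which lets each $F^{2^j}$ be evaluated as a single rescaled application of $F$. Writing $58 = 2^5 + 2^4 + 2^3 + 2^1$, the full 58-fold iteration collapses to four rescaled applications of $F$; the sign condition on $F^{58}$ along $\partial \apPtilde_6$ can then be verified by the same mesh-plus-distortion argument as above, with accumulated rounding carefully tracked via \autoref{CorLambda} and \autoref{RemSpeedErrs}.
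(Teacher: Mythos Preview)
Your scheme works for the \emph{inner} containments and for $V_2\subset\Vtwo$, but there is a real gap in the \emph{outer} containments $\W\subset\Wout$ and $\H[1]\subset\Hout[1]$.  You propose to verify $\Im\widehat F(z)>0$ along $\partial\Pout\cap\mh_+$ (off the axes), but this cannot be carried out.  The rectangle $\Pout=\CBOX{0}{2.495+2.81i}$ reaches $|z|\approx 3.76$, well beyond the largest radius $\sqrt8\approx 2.828$ for which Lanford supplies any bound on $|F-\apF|$ (\autoref{LanfordsBounds}); worse, $\Pout$ contains the point $x_1\approx 1.831+2.683i\in\partial\widehat W$ (see \autoref{RemFlambda}), so $\widehat F$ is not holomorphic on $\Pout$ and there is no a~priori reason that even $\partial\Pout\subset\widehat W$.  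Finally, the top and right edges of $\Pout$ traverse many tiles of $\Puz$ of both signs, so the uniform sign condition you hope to check is in fact false.  The same obstruction applies verbatim to $\Qout$, hence to $\H[1]\subset\Hout[1]$.

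The paper circumvents this by passing to the auxiliary map $F_\lambda(z)=\overline{F(\lambda z)}$, which fixes $\Piece{0,\I}$ and only requires evaluating $F$ at arguments of modulus at most $\lambda\cdot 3.76\approx 1.5$, safely inside Lanford's range.  One checks that $F_\lambda(\partial\Pout)$ escapes $\Pout$ except for two small triangular regions, and that these escape after two further iterates of $F_\lambda$; a short argument exploiting the $F_\lambda$-invariance of $\partial\Piece{0,\I}$ (parameterized so that $p(F_\lambda(z))=\tfrac12 p(z)$) then forces $\Piece{0,\I}\subset\Pout$.  The containment $\H[1]\subset\Hout[1]$ is handled by the analogous check that $F_\lambda(\Hout)\supset\W\cap\mh_+$.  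For $\Wtilde[6]\subset\apWtilde_6$ the paper's argument is close in spirit to yours but targets $\Piece[2]{0}$ via $F^{54}$ rather than $\R$ via $F^{58}$: one checks $\apF^{54}(\partial\apPtilde_6)$ avoids $-\Pout[2]$ up to a small triangle, disposed of by a further $F^2$; this uses the already-established $\W\subset\Wout$ rather than a bare sign condition.
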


\begin{proof}
To see that $\Win \subset \W$, compute the approximation of
$F(\partial\Win)$.  For $z\in \partial\Win$ with $|z|<\sqrt{6}$ we use
$\apF(\apz)$.
For $|z|\ge\sqrt{6}$, we instead approximate $F(z)$ by
$(-1/\lambda)\apF^2(\lambda\apz)$ (since we don't have a bound on
$|F(z)-\apF(\apz)|$ for $|z|\ge\sqrt{8}$).
Observe that all points of the approximation of $F(\partial \Win)$ avoid the
slits $(-\infty, -1/\lambda]$ and $[1/\lambda^2,\infty)$ by
  at least $5.4\times10^{-4}$ for $|\apz|<\sqrt{6}$ and more than
  $2.8$ for $|\apz|\ge\sqrt{6}$,
significantly more than the error bounds given in \autoref{PropEvalFError}.
Since
$F(\W) = \C\setminus\bigl( (-\infty,-1/\lambda]\cup[1/\lambda^2,\infty)\bigr)$,
we have $\Win \subset \W$.

\medskip
A similar calculation shows that $\Hin[1] \subset\H[1]$:
$\apF\big(\Qin[1]\big)\subset\Win$ with a margin of
more than~$.004$, which by \autoref{PropEvalFError} is much greater than
the necessary space since $|z|<1.75$ for $z\in\Qin[1]$.
Since $\Hin=\Pin\cup\Qin$, we know $F(\Hin[1])\subset\W$ and
thus $\Hin[1]\subset\H[1]$.

\smallskip
To see that $V_2 \subset \Vtwo$, recall from \autoref{V2} that
$V_2=F^{-3}(\W).$   We need only
check that $\apF^3(\partial\Vtwo)$ stays away from $\Wout$  with a sufficient margin:  the distance between the boundaries of
these two sets is greater than $1.5\times 10^{-3}$ (with the closest
point of $\apF^3(\partial\Vtwo)$ being the image of $9.33\lambda^2$).

We calculate that $|\apF^3(\apz) - F^3(z)| < 4\times 10^{-5}$ on
$\Vtwo$ by observing that for $\apz\in \Vtwo$, we have
$|\apz|<1.50$,
$|\apF(\apz)|  <1.71$ and
$|\apF^2(\apz)|<2.35$,  with
$|\apF'(\apz)|        < 2.81$,
$|\apF'(\apF(\apz))|  < 3.88$ and
$|\apF'(\apF^2(\apz))|<19.27$.
Applying the Koebe Distortion Theorem (see
\autoref{RemCompositionErrs}) twice gives an upper bound on the error of
$3.8\times 10^{-5}$.
Once we establish that $\W \subset \Wout$, the desired result follows.

\medskip
Since $\Wout$ contains points outside the domain of definition
of $F$, we cannot verify directly that $\W \subset \Wout$.
Instead, to do this we use the properties of the map
$F_\lambda(z)=\overline{F(\lambda z)}$
mentioned in \autoref{RemFlambda}.
Notice that $F_\lambda(z)$ can be approximated on $\Wout$ with an error less
than $5.1\times 10^{-7}$ since $|\lambda z|<1.502$ on $\Wout$ (see
\autoref{PropEvalFError}).

 Approximating $F_\lambda(z)$ by $\apF_\lambda(\apz)=\overline{\apF(\lambda\apz)}$ we observe that
$\apF_\lambda(\partial\Pout)$ almost avoids $\Pout$, intersecting it in
two curves lying inside the triangles
$T_1 = \triangle(2.08i,\; 2.81i,\; .31+2.81i)$ and
$T_2=  \triangle(2.495,\; 2.05+2.81i,\; 2.495+2.81i)$.
Although the triangles $T_1$ and $T_2$ lie inside $\Pout$,
we have $\dist(\apF_\lambda^2(T_1),\,\Pout) > .01$ and
$\dist(\apF_\lambda^2(T_2),\,\Pout) > .22$.
A straightforward computation shows that for the relevant points we
have $|F^2_\lambda(z) - \apF_\lambda^2(\apz)| < 1.7\times 10^{-6}$,
so $F^2(T_1)$ and $F^2(T_2)$ lie outside of $\Pout$. This implies that
$\Piece[]{0,\I}\subset\Pout$ (and so $\W \subset \Wout$
and $V_2\subset\Vtwo$). See \autoref{FigH1covers}.

\begin{figure}[bht]
 \centerline{\includegraphics[height=9\baselineskip]{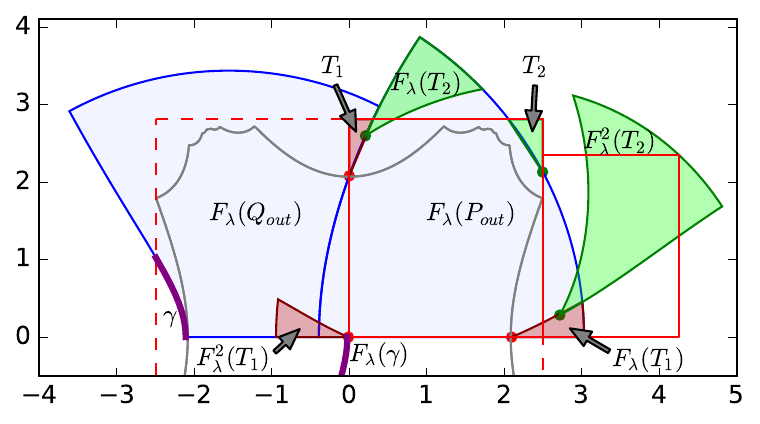} \quad
             \includegraphics[height=9\baselineskip]{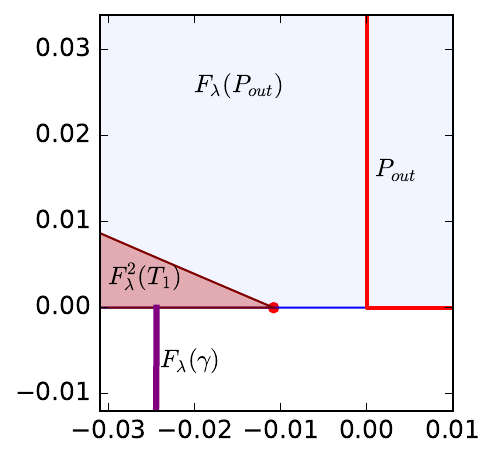}}
 \caption{\label{FigH1covers}
   Showing that $\W\subset \Wout$ and $\Hin[1]\subset\H[1]$.
   $F_\lambda(\Pout\cup\Qout)$ is shaded in blue, with $\Pout$ and $\Qout$
   outlined in red. The image on the right is a zoom of the one on the left.
   }
\end{figure}

Indeed, assume that $\Piece{0,\I}\not\subset\Pout$, and let
$\Gamma$ be the closure of the part of $\partial\Pout$ in the first quadrant
($\Gamma=[2.81i,\ 2.495+2.81i]\cup[2.495+2.81i,\ 2.495]$).
Then $\partial\Piece{0,\I}$ must intersect $\Gamma$. Recall that
$\partial\Piece[]{0,\I}$ is $F_\lambda$-invariant with two fixed points $x_0$
(attracting) and $x_1$ (repelling); the two components of
$\partial\Piece{0,\I} \setminus\Set{x_0,x_1}$ are interchanged by $F_\lambda$.
Parameterize the two components of $\partial\Piece{0,\I}
\setminus\Set{x_0,x_1}$ so that the parameterizations are the two inverse
branches of a function
$p:\partial\Piece{0,\I}\setminus\{x_1\}\to [0,\infty)$
satisfying%
\footnote{One way to construct such a function $p$ is to
  set $p(x_0)=0$, and for $z\in[0,F(\lambda)]$ let $p(z)=4-3z/F(\lambda)$.
  Then  propagate~$p(z)$ to the rest of
  $\Piece{0,\I}\setminus\{x_1\}$ via $p(F_\lambda(z))=\tfrac{1}{2}p(z)$.
}\
$p(F_\lambda(z))=\tfrac{1}{2}p(z)$ for every $z$ and $p(x_0)=0$.
Consider the closed set $\partial\Piece[]{0,\I}\cap\Gamma$; let $z_1$ be the
point where $p(z_1)$ attains the minimum on this set.
As a consequence of the previous argument, either $F_\lambda(z_1)$ or
$F_\lambda^3(z_1)$ lies outside $\Pout$ (see \autoref{FigH1covers}). Therefore,
there exists a point $z_2\in\Piece{0,\I}\cap\Gamma$ with
$p(z_2)\leq\tfrac{1}{2}p(z_1)$. This contradicts the definition of $z_1$, and
hence $\Piece{0,\I}\subset\Pout$.

\medskip
To show that $\H[1] \subset  \Hout[1]$ it is sufficient to confirm that
$F_\lambda(\partial\Hout)\cap (\W\cap\mathbb H_+)=\emptyset$.   Observe that
$F_\lambda(\partial\Hout)\cap (\Wout\cap\mathbb H_+)$ consists of
two curves, one of which belongs to $T_2$ and so is outside of~$\W$. The
other curve $\gamma$ joins the leftmost boundary of $\Wout$ and the real axis.
See \autoref{FigH1covers}.  This curve $\gamma$ must lie outside of $\W$, since
$\re\apF_\lambda(\apz)<- 0.024$ for all $z\in\gamma$ and
$\re(z)>0$ for $z\in F_\lambda(\W)$.
The error in computing
$\apF_\lambda^2$ on all of $\Hout$ is less than $2.3\times10^{-6}$, giving
a sufficient margin of error.

\medskip
In order to prove that $\Wtilde[6] \subset  \apWtilde_6$, recall from
\autoref{WtildeProps}\ref{PtildeInduction} that
$F^{2^n - 2^k}(\Ptilde[n]) = \Ptilde[k]$ for all $n\ge k\ge3$.
Hence $F^{54}(\Ptilde[6]) = F^{6}(\Ptilde[4])=\Piece[2]{0}$.
Observe that the curve $\apF^{54}(\partial\apPtilde_6)$ avoids
$-\Pout[2]$ except for a portion contained in the triangle
$T_3=\triangle(-\lambda x_0,\;-2.5\lambda^2 -.1i,\;-2.5\lambda^2)$.
See \autoref{PicF54}.
The image of $\apPtilde_6 \cap (\R\cup\i\R)$ is contained within the
segment $(-0.30996, -0.30970)$; the remainder of the image exits
through $\mh_+$ before intersecting the real axis at two points, one
inside $T_3$ (more than $3.2\times10^{-4}$ from the vertex of $T_3$)
and the other beyond $+0.0085$; the remaining part
of the image curve gets no closer to $-\Pout[2]$ than this. To see that $T_3$ lies outside $\Piece[2]{0,\III}$,
observe that $F^2(\Piece[2]{0,\III})=\Piece[1]{0,\II}$, but all points
of $F^2(T_3)$ have positive real parts. We obtain that
$\dist(\apF^{54}(\partial\apPtilde_6)\cap\mh_-,\ \Piece[2]{0,\III})>3.2\times10^{-4}$.

\begin{figure}[htb]
 \centerline{
   \includegraphics[height=.27\hsize]{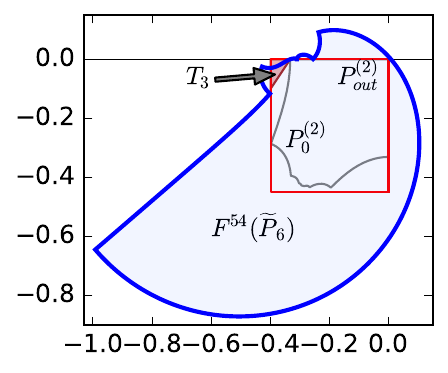}\hfil
   \includegraphics[height=.27\hsize]{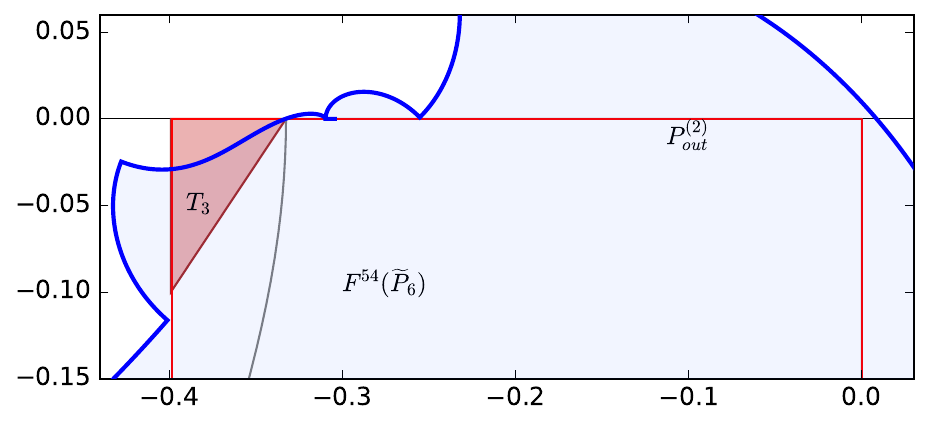}
 }
 \caption{\label{PicF54}
   The set $\apF^{54}(\apPtilde_6)$ is shown shaded in blue, along
   with $\Pout[2]$~(red outline), $\Piece[2]{0,\III}$ (gray outline), and
   $T_3$ (shaded red).  The image on the right is a close-up of the left. }
\end{figure}

To verify that the above calculations are sufficiently accurate we write
$\apF^{54} = \apF^{2}\circ\apF^{4}\circ\apF^{16}\circ\apF^{32}$
and (as noted in \autoref{RemSpeedErrs}) compute $\apF^{2^n}(\apz)$ as
$(-\lambda)^n \apF(\apz/\lambda^n)$; then we have
$|F^{54}(z) - \apF^{54}(\apz)|$ no greater than $1.3\times 10^{-10}$ for all
$z\in\apWtilde_6$, establishing that $\Wtilde[6]\subset\apWtilde_6$
and completing the proof.
\end{proof}

\medskip
\begin{rem}
To approximate $\Wtilde[n]$ for $n>6$,
the set $\Gen[n-6]{\apWtilde_6}$ may be used,
since by
\autoref{WtildeProps}\ref{WtildeScale} and \autoref{LmWapprox} we have
$\Wtilde[n] \subset \lambda^{n-6}\,\apWtilde_6$.
One can use $\apWtilde_4=\CBOX{-.09-.15i}{.09+.15i}$
and $\apWtilde_5=\Gen[5]{(\Pout\cup\CBOX{1.2+1.79i}{3.0+3.65i})}$;
$\Wtilde[9]\subset\Pout[9]$.
However, we do not need to use these.
\end{rem}

Using Lemma \ref{LmWapprox}, we can calculate better approximations of $H$
and $W$ from below and from above. For instance, to find a more accurate
approximation of $\Piece{0,\I}$ from above we use the following.

\begin{algorithm}\label{AlgPoutAcc}
Fix $m\ge 1$ and $t>0$ small.
To compute an upper bound for $\Piece{0,\I}$, we find a collection
$\SS_{m,t}$ of disks with radius $t$ which cover  $\Piece{0,\I}$.
First, select a grid of points $z$ so that
  $\bigcup_z \Disk[t]{z}$ covers $\Pout$. Include in $\SS_{m,t}$ all
the disks $\Disk[t]{z}$ which intersect $\Pin$. For each remaining point $z$ in
the grid, set $w=\lambda^n z$, $q=\lambda^n(0.1+0.1i)$, $t_0=\lambda^m t$ and
run the following routine.
  \begin{enumerate}
    \item \label{PoutIter} For $0< j<2^m$,  calculate $\apw_j=\apF^j(w)$. Using
      the Koebe Theorem and \autoref{PropEvalFError}, find $t_j$ such that
      $\apF^j(\Disk[t_0]{w})\subset \Disk[t_j]{\apw_j}$.
    \item\label{PoutZero}
      If $\Disk[t_j]{\apw_j}$ contains zero, add $\Disk[t]{z}$ to $\SS_{m,t}$ and
      exit the routine.
    \item\label{PoutWrongQuadr}
      If $\Disk[t_j]{\apw_j}$ does not intersect $\Wout$ or the quadrant
      to which $\apF^j(q)$ belongs, discard $z$ and exit the routine.
    \item\label{PoutNothing}
      If for all $0<j<2^m$ neither of the conditions
      \ref{PoutZero} or \ref{PoutWrongQuadr} are satisfied, then add
      $\Disk[t]{z}$ to $\SS_{m,t}$.
  \end{enumerate}
\end{algorithm}

\subsection{Implementing Algorithms~\ref{AlgXn} and \ref{AlgSigma}}

We only need to make a few straightforward substitutions in order to
implement the algorithms for approximating $\tilde{X}_6$ and $\Sigma_6$.

\smallskip
We make the obvious substitution of $F^j$ by $\apF^j$ and $DF^j$ by $D\apF^j$.
When calculating $\apF^j$, we instead compute compositions of $\apF^{2^k}$ as
described in \autoref{RemFSpeedUp}, replacing the set $\W[n]$ by $\Win[n]$.
Whenever practical, we scale $\apz$ so that $z/\lambda^m$ lies in $\Win[1]$,
ensuring that we have $|\apz/\lambda^m|<1.17$. This implies by
\autoref{PropEvalFError} that $\apF$ and $\apF'$
agree with $F$ and $F'$ to within $6.5\times 10^{-13}$ and $5.6\times 10^{-12}$,
respectively.
We take $K=20$.

Further, we replace $\Piece[1]{0,\I}$ by $\Pout[1]$,
$V_2$ by $\Vtwo$, and $\Wtilde[6]$ by $\apWtilde_6$.
When checking whether $\apF^j(\apz) \in \H[1]$ for all $j$
(step~\ref{XnNoescape} of both algorithms, as well as
\autoref{AlgSigma}\ref{SigmaNoEscape}), we use $\Hin[1]$, but otherwise we use
$\Hout[1]$ for $\H[1]$.

\medskip
In calculating the orbit of a point $z$ we keep a running bound on
the accumulated total difference between the true orbit $F^j(z)$ and
the aproximation $\apF^j(\apz)$, as well as the corresponding derivatives.
More specifically, when calculating the iterates
$\apz_{k+1} = \apF^{j_k}(\apz_k)$, we use \autoref{PropEvalFError} and
\autoref{RemCompositionErrs} to compute upper bounds
$\delta_k > |DF^{j_k}(\apz)-D\apF^{j_k}(\apz)|$ and
$\epsilon_k > |F^{j_k}(\apz)-\apF^{j_k}(\apz)|$.

When calculating $r$ in \autoref{AlgXn}\ref{XnCheck},
$\dist(\apF^\ell(\apz),\Vtwo^*)$ should be reduced by $\epsilon_k$ and
$D\apF^\ell(\apz_k)$ should be increased by $\delta_k$; similar changes should
be made in \autoref{AlgSigma}\ref{SigmaCheck}.

\medskip


As long as the points $\apz_k$ remain in $\Win[1]$ for all $k$, we can compute
compositions of $\apF^{2^n}$ with reasonably high precision.\footnote{%
  If $\apz_k\in\Hout[1]\setminus\Win[1]$ for some $k$, we can either use
  $\apz_{k+2}=\apF(\apz_k)=(-1/\lambda)\apF^2(\lambda\apz_k)$ with
  $\lambda\apz_k$ and $F(\lambda\apz_k)$ in $\Win[1]$,
  or use the bounds from \autoref{PropEvalFError} together with
  \autoref{RemCompositionErrs} to calculate $\epsilon$ and $\delta$ for
  the remainder of the orbit.}
In particular, if $\FLT{g}_k$ is a $k$-fold composition of such
approximations and $g_k$ is the same composition of Feigenbaum maps
$F^{2^n}$, we have the following worst-case bounds on $\epsilon_k$
and~$\delta_k$ for $\apz_k\in\Win[1]$.

\begin{center}\begin{tabular}{rc@{\;\;}lc@{\;\;}l}
$k$ &&
{$|g_k(z)-\FLT{g}_k(\apz)|$}    &&
{$|g_k'(z)-\FLT{g}_k'(\apz)|$} \\[2pt]
\hline\\[-2ex]
 1 && $6.45 \times 10^{-13}$ &&         $5.59 \times 10^{-12}$ \\
 5 && $2.15 \times 10^{-10}$ &&         $4.45 \times 10^{-9 }$ \\
10 && $2.14 \times 10^{-7}$  &&         $1.43 \times 10^{-5}$ \\
15 && $2.13 \times 10^{-4}$  &&         $4.57 \times 10^{-2}$ \\
18 && $1.20 \times 10^{-2}$  &&  $\!\!\!15.14$
\end{tabular}
\end{center}

The above bounds are the worst case; actual calculated orbits have
significantly better bounds as long as they remain within $\Win[1]$.
As noted earlier, we compute sharper bounds on the specific function values and
derivatives for each point $\apz_k$, and incorporate these into our
calculations in the implementation of the algorithms.
This ensures that all calculated orbits are shadowed by true orbits under $F$.


\medskip
To estimate an upper bound on $M_n =  M\bigl(A)$ (see \autoref{PropKoebeBA0})
with $A=(\lambda^{-n}\Sigma_{n}) \cap \Piece{0,\I}$,
we replace the integral in the definition of $g_A(z)$ by the Riemann sum taken
over the centers of the subset $\EE_{n,r}$ of $A$ from
\autoref{AlgSigma}, that is
\[ g_A(z)\ge g_{\EE_{n,r}}(z) \approx \sum_{w_k} \frac{2r^2}{C^2(z, w_k)},
   \quad\text{as $w_k$ ranges over centers of the disks in $\EE_{n,r}$}.
\]
We then approximate $M(A)\le M(\EE_{n,r}) \approx \max( 1/g_{\EE_{n,r}}(z_{j}))$, where
$z_{j}$ ranges over centers of the disks from the covering $\SS_{m,t}$ of
$\Piece{0,\I}$ from   \autoref{AlgPoutAcc} (we take $m=2,t=2^{-6}\sqrt 2$).
The relative error in this approximation of $M(A)$ can be bounded by noticing
that
$$C(z,w)\leq C(z,z_j)\,C(z_j,w_k)\, C(w_k,w);$$
for points $w\in \Disk[r]{w_k}$, the Koebe Theorem gives
$$
C(w_k,w)\leq \frac{1-r/R}{(1+r/R)^3}
\quad\text{with $R=\dist(w_k,\,\partial\C_\lambda)$}.
$$
Recall from \autoref{PropC2} that
$\C_\lambda=\C\setminus\left(
  (-\infty,-\tfrac{1}{\lambda}]\cup[\tfrac{F(\lambda)}{\lambda^2},\infty)
 \right)$.

\medskip
In implementing \autoref{AlgXn} to obtain $\DD_{n,r} \supset X_n$ and
\autoref{AlgSigma} for $\EE_{n,r} \subset \Sigma_n$, we take $n=6$ in
both cases.  For $\DD_{n,r}$ we use $r=2^{-17}\sqrt2$; for $\EE_{n,r}$,
taking $r=2^{-11}\sqrt2$ is sufficient.


\ifthenelse{\IsThereSpaceOnPage{.25\textheight}}{\clearpage}{}

\end{document}